\theoremstyle{plain}
\newtheorem{thm}{Theorem}[section]
\newtheorem{lemma}[thm]{Lemma}
\newtheorem{cor}[thm]{Corollary}
\newtheorem{prop}[thm]{Proposition}
\theoremstyle{definition}
\newtheorem{definition}[thm]{Definition}
\newtheorem{remark}[thm]{Remark}
\newtheorem{defi}[thm]{Definition}
\newtheorem{rmk}[thm]{Remark}
\newtheorem{example}[thm]{Example}
\def\dim{\mathop{\hbox {dim}}\nolimits}
\def\det{\mathop{\hbox {det}}\nolimits}
\def\im{\mathop{\hbox {Im}}\nolimits}
\def\ker{\mathop{\hbox{Ker}}\nolimits}
\def\aq{A_{\mathfrak{q}}}
\newcommand{\fra}{\mathfrak{a}}
\newcommand{\frg}{\mathfrak{g}}
\newcommand{\frh}{\mathfrak{h}}
\newcommand{\frk}{\mathfrak{k}}
\newcommand{\frl}{\mathfrak{l}}
\newcommand{\fro}{\mathfrak{o}}
\newcommand{\frp}{\mathfrak{p}}
\newcommand{\frq}{\mathfrak{q}}
\newcommand{\frs}{\mathfrak{s}}
\newcommand{\frt}{\mathfrak{t}}
\newcommand{\fru}{\mathfrak{u}}
\newcommand{\bbC}{\mathbb{C}}
\newcommand{\bbR}{\mathbb{R}}
\newcommand{\bbZ}{\mathbb{Z}}
\newcommand{\caA}{\mathcal{A}}
\newcommand{\caB}{\mathcal{B}}
\newcommand{\caC}{\mathcal{C}}
\newcommand{\caK}{\mathcal{K}}
\newcommand{\caL}{\mathcal{L}}
\newcommand{\caR}{\mathcal{R}}
\newcommand{\be}{\begin {equation}}
\newcommand{\ee}{\end {equation}}
\newcommand{\bp}{\begin {proof}}
\newcommand{\ep}{\end {proof}}
\begin{document}

\title[Dirac index of some unitary representations of $Sp(2n, \bbR)$ and $SO^*(2n)$]{Dirac index of some unitary representations of $Sp(2n, \bbR)$ and $SO^*(2n)$}
\author{Chao-Ping Dong}
\author{Kayue Daniel Wong}

\address[Dong]{School of Mathematical Sciences, Soochow University, Suzhou 215006,
P.~R.~China}
\email{chaopindong@163.com}

\address[Wong]{School of Science and Engineering, The Chinese University of Hong Kong, Shenzhen,
Guangdong 518172, P. R. China}
\email{kayue.wong@gmail.com}

\abstract{Let $G$ be $Sp(2n, \bbR)$ or $SO^*(2n)$. We compute the Dirac index of a large class of unitary representations considered by Vogan in Section 8 of \cite{Vog84}, which include all weakly fair $\aq(\lambda)$ modules and (weakly) unipotent representations of $G$ as two extreme cases. We conjecture that these representations exhaust all unitary representations of $G$ with nonzero Dirac cohomology. In general, for certain irreducible unitary module of an equal rank group, we clarify the link between the possible cancellations in its Dirac index, and the parities of its spin-lowest $K$-types.}
 \endabstract

\subjclass[2010]{Primary 22E46.}

\keywords{Dirac cohomology, Dirac index, parity of spin-lowest $K$-type, weakly unipotent representation.}

\maketitle
\section{Introduction}

Let $G$ be a connected linear Lie group with Cartan involution $\theta$. Denote by $\widehat{G}$ the unitary dual of $G$, which stands for the set of all the equivalence classes of irreducible unitary representations of $G$.
Classification of $\widehat{G}$ is a fundamental problem in representation theory of Lie groups. An algorithm answer has been given by Adams, van Leeuwen, Trapa and Vogan \cite{ALTV}. Including unitarity detecting, the corresponding software implementation  \texttt{atlas} \cite{At} now computes many aspects of problems in Lie theory.

To get a better understanding of $\widehat{G}$, Vogan introduced Dirac cohomology in 1997 \cite{Vog97}. Let us build up necessary notation to recall this notion. Assume that $K:=G^\theta$ is a maximal compact subgroup of $G$.  Choose a maximal torus $T$ of $K$. Write the Lie algebra of $G$, $K$ and $T$ as $\frg_0$,  $\frk_0$ and $\frt_{0}$, respectively. Let
 $$
 \frg_0=\frk_0\oplus\frp_0
 $$
 be the Cartan decomposition on the Lie algebra level.
 Let $\fra_{0}$ be the centralizer of $\frt_{0}$ in $\frp_0$, and put $A=\exp(\fra_{0})$. Then $H=TA$ is the unique maximally compact $\theta$-stable Cartan subgroup of $G$, and
 $\frh_{0}:=\frt_{0}\oplus\fra_{0}$ is the fundamental Cartan subalgebra of $\frg_0$. We will write $\frg=\frg_0\otimes_{\bbR} \bbC$ and so on.

Fix a non-degenerate invariant symmetric bilinear form $B$ on $\frg$. We may also write $B$ as $\langle \cdot, \cdot \rangle$. Then $\frk$ and $\frp$ are orthogonal to each other under $B$. Fix an orthonormal basis $\{Z_1, \dots, Z_m\}$ of $\frp_0$ with respect to the inner product on $\frp_0$ induced by $B$. Let $U(\frg)$ be the universal enveloping algebra, and let $C(\frp)$ be the Clifford algebra. As introduced by Parthasarathy \cite{Pa},
the \emph{Dirac operator} is defined as
\begin{equation}\label{Dirac-operator}
D:=\sum_{i=1}^m Z_i\otimes Z_i\in U(\frg)\otimes C(\frp),
\end{equation}
which is independent of the choice of the orthonormal basis $\{Z_i\}_{i=1}^m$.
Let ${\rm Ad}: K\to SO(\frp_0)$ be the adjoint map, and $p: {\rm Spin}(\frp_0) \to SO(\frp_0)$ be the universal covering map. Then
$$
\widetilde{K}:=\{ (k, s) \in K \times {\rm Spin}(\frp_0)\mid {\rm Ad}(k)=p(s) \}
$$
is the spin double cover of $K$. Let $\pi$ be any $(\frg, K)$ module.
The Dirac operator $D$ acts on $\pi\otimes {\rm Spin}_G$, where ${\rm Spin}_G$ is a spin module for the Clifford algebra $C(\frp)$. The \emph{Dirac cohomology} is defined \cite{Vog97} as the following $\widetilde{K}$-module:
\begin{equation}\label{Dirac-cohomology}
H_D(\pi):=\ker D/(\ker D\cap\im D).
\end{equation}

 Fix a positive root system $\Delta^+(\frk, \frt)$ once for all, and denote the half sum of roots in $\Delta^+(\frk, \frt)$ by $\rho_c$. We will use $E_{\mu}$ to denote the $\frk$-type (that is, an irreducible representation of $\frk$) with highest weight $\mu$. Abuse the notation a bit, $E_{\mu}$ will also stand for the $K$-type as well as the $\widetilde{K}$-type with highest weight $\mu$.

The following Vogan conjecture, proved by Huang and Pand\v zi\'c \cite{HP}, is foundational for computing $H_D(\pi)$.

\begin{thm}\emph{(\textbf{Huang-Pand\v zi\'c} \cite{HP})}\label{thm-HP}
Let $\pi$ be any irreducible $(\frg, K)$ module with infinitesimal character $\Lambda\in\frh^*$. Assume that $H_D(\pi)$ is non-zero, and that $E_{\gamma}$ is contained in $H_D(\pi)$. Then $\Lambda$ is conjugate to $\gamma+\rho_c$ by some element in the Weyl group $W(\frg, \frh)$.
\end{thm}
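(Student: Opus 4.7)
The plan is to follow the strategy developed by Vogan and completed by Huang and Pand\v zi\'c: promote the Dirac operator $D$ to an algebraic bridge between the center $Z(\frg)$ of $U(\frg)$ and the center $Z(\frk_\Delta)$ of a diagonal copy $\frk_\Delta$ of $\frk$ sitting inside $U(\frg)\otimes C(\frp)$ via $X \mapsto X\otimes 1 + 1\otimes \alpha(X)$, where $\alpha : \frk \to C(\frp)$ comes from the adjoint action. Concretely, I would first construct an algebra homomorphism
\[
\zeta : Z(\frg) \longrightarrow Z(\frk_\Delta)
\]
together with, for each $z \in Z(\frg)$, an element $a(z) \in U(\frg)\otimes C(\frp)$ such that
\[
z\otimes 1 - \zeta(z) \;=\; D\,a(z) + a(z)\,D.
\]
This is Vogan's conjecture; the construction proceeds by induction on the PBW filtration, starting from Parthasarathy's formula $D^{2} = -\Omega_{\frg}\otimes 1 + \Omega_{\frk_\Delta} + C$ (with $C$ a scalar) and solving the required equation order by order.

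Given such a $\zeta$, the theorem follows almost immediately. The subspace $D(U(\frg)\otimes C(\frp)) + (U(\frg)\otimes C(\frp))D$ acts as zero on $H_D(\pi)=\ker D/(\ker D\cap\im D)$, so $z\otimes 1$ and $\zeta(z)$ induce the same endomorphism there. Consequently, on the $\widetilde K$-type $E_\gamma \subset H_D(\pi)$ the central element $z$ must act both by the scalar $\chi_\Lambda(z)$ (by hypothesis) and by the scalar through which $\zeta(z)\in Z(\frk_\Delta)$ acts on $E_\gamma$, which is $\chi^{\frk}_{\gamma+\rho_c}(\zeta(z))$ via the Harish-Chandra isomorphism for $\frk$.

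The final bookkeeping step is to identify $\zeta$, under the Harish-Chandra isomorphisms $Z(\frg)\cong S(\frh)^{W(\frg,\frh)}$ and $Z(\frk_\Delta)\cong S(\frt)^{W(\frk,\frt)}$, with the restriction map induced by $\frt\hookrightarrow\frh$. This can be checked on a well-chosen faithful test case, such as a discrete series representation, where both the infinitesimal character and the Dirac cohomology are classical. The resulting equality $\chi_\Lambda(z) = \chi_{\gamma+\rho_c}(z)$ for all $z \in Z(\frg)$ then forces $\Lambda$ and $\gamma+\rho_c$ to lie in the same $W(\frg,\frh)$-orbit, which is exactly the conclusion.

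The main obstacle is the first step, the construction of $\zeta$. The difficulty is cohomological: one must show that the obstructions appearing in a suitably filtered version of $U(\frg)\otimes C(\frp)$ all vanish, which amounts to a Koszul-type acyclicity statement for the differential $d = [D,\cdot\,]$. Huang and Pand\v zi\'c handle this by identifying the relevant complex with a Koszul complex built from $\frk$-harmonics in $U(\frg)$; once this acyclicity is in hand, everything else in the proof is routine manipulation of Harish-Chandra isomorphisms.
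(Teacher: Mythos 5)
The paper does not prove this theorem itself; it cites it as a result of Huang and Pand\v zi\'c from \cite{HP}. Your sketch is an accurate summary of their proof strategy: construct the homomorphism $\zeta\colon Z(\frg)\to Z(\frk_\Delta)$ satisfying $z\otimes 1 - \zeta(z) = Da(z) + a(z)D$ (Vogan's conjecture), observe that this forces agreement of infinitesimal characters on Dirac cohomology, and identify $\zeta$ under the two Harish-Chandra isomorphisms with the restriction $S(\frh)^{W(\frg,\frh)}\to S(\frt)^{W(\frk,\frt)}$. Your description of the key technical step — passing to the associated graded of $U(\frg)\otimes C(\frp)$, where $\mathrm{ad}(D)$ becomes a Koszul-type differential whose $\frk$-invariant cohomology vanishes in positive degree, and then lifting the solution inductively through the filtration — is precisely how Huang and Pand\v zi\'c proceed. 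The one place where you deviate slightly is the final identification of $\zeta$: they compute it directly at the top-symbol level rather than testing against a discrete series, but your alternative is sound in spirit (one would need $\zeta$ to be a graded algebra homomorphism and to have enough test modules to separate points, which is fine but a bit more delicate than testing a single discrete series). In short, the proposal is correct and follows essentially the same route as the cited proof.
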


Let $\widehat{G}^d$ be the \emph{Dirac series} of $G$. That is, the members of $\widehat{G}$ with non-zero Dirac cohomology. Classification of Dirac series is a smaller project than the classification of the unitary dual. Yet it is still worthy of pursuing  since Dirac series contains many interesting unitary representations such as the discrete series \cite{HP},  certain $A_\frq(\lambda)$ modules \cite{HKP} and beyond. Moreover, due to the research announcement of Barbasch and Pand\v zi\'c \cite{BP19}, Dirac series should have applications in the theory of automorphic forms.

Recently, Dirac series has been classified for complex classical Lie groups \cite{BDW,DW1,DW2} and $GL(n, \bbR)$ \cite{DW3}.
For other classical groups such as $U(p, q)$, $Sp(2n, \bbR)$ and $SO^*(2n)$ whose unitary dual is unknown, it is still very hard to achieve the complete classification of their Dirac series. In this case, the \emph{Dirac index} can offer some help: it is much easier to compute and whenever it is non-zero, the Dirac cohomology must be non-zero.

Let us build up a bit more notation for introducing Dirac index. Unless stated otherwise, we further assume that $G$ is equal rank henceforth. Then $\frh=\frt$ and $\fra=0$. Put $\frt_{\bbR}=i\frt_0$ and $\frt_{\bbR}^*=i\frt_0^*$. Let $\caC\subseteq \frt_{\bbR}^*$ be the dominant Weyl chamber for $\Delta^+(\frk, \frt)$. Choose a positive root system
$$
\Delta^+(\frg, \frt)=\Delta^+(\frk, \frt) \cup \Delta^+(\frp, \frt).
$$
Let $\caC_{\frg}\subseteq \frt_{\bbR}^*$ be the dominant Weyl chamber for $\Delta^+(\frg, \frt)$.
Let
\begin{equation}
W(\frg, \frt)^1=\{w\in W(\frg, \frt) \mid w \caC_{\frg} \subseteq \caC\}.
\end{equation}
The set $W(\frg, \frt)^1$ has cardinality $s:=|W(\frg, \frt)|/|W(\frk, \frt)|$. Any positive root system $(\Delta^+)^\prime(\frg, \frt)$ of $\Delta(\frg, \frt)$  containing $\Delta^+(\frk, \frt)$ has the form
$$
(\Delta^+)^\prime(\frg, \frt)=\Delta^+(\frk, \frt) \cup (\Delta^+)^\prime(\frp, \frt)
$$
with $(\Delta^+)^\prime(\frp, \frt)=w\Delta^+(\frp, \frt)$ for some $w\in W(\frg, \frt)^1$.
Denote the half sum of roots in $\Delta^+(\frg, \frt)$ (resp., $\Delta^+(\frp, \frt))$ by $\rho$ (resp., $\rho_n$). The notation $\rho^\prime$ and $\rho_n^\prime$ will be interpreted similarly. Then
$$
\rho=\rho_c + \rho_n, \quad  \rho^\prime=\rho_c + \rho_n^\prime.
$$
Let
$$
\frp^+=\sum_{\alpha\in \Delta^+(\frp, \frt)} \frg_{\alpha}, \quad
\frp^-=\sum_{\alpha\in \Delta^+(\frp, \frt)} \frg_{-\alpha}.
$$
Then $\frp=\frp^+ \oplus \frp^-$ and
$$
{\rm Spin}_{G}\cong \bigwedge \frp^+ \otimes \bbC_{-\rho_n}.
$$
Any weight in ${\rm Spin}_{G}$ has the form $-\rho_n + \langle\Phi\rangle$, where $\Phi$ is a subset of $\Delta^+(\frp, \frt)$ and $\langle\Phi \rangle$ stands for the sum of the roots in $\Phi$.
Now put
\begin{equation}\label{spin-module-even-odd}
{\rm Spin}_{G}^+ = \bigwedge^{\rm even} \frp^+ \otimes \bbC_{-\rho_n}, \quad {\rm Spin}_{G}^- = \bigwedge^{\rm odd} \frp^+ \otimes \bbC_{-\rho_n}.
\end{equation}
The Dirac operator $D$ interchanges $\pi\otimes {\rm Spin}^+_{G}$ and $\pi\otimes {\rm Spin}^-_{G}$. Thus the Dirac cohomology $H_D(\pi)$ breaks up into the even part $H_D^+(\pi)$ and the odd part $H_D^-(\pi)$. The \emph{Dirac index} of $\pi$ is defined as
the virtual $\widetilde{K}$-module
\begin{equation}\label{Dirac-index}
{\rm DI}(\pi):=H_D^+(\pi)-H_D^-(\pi).
\end{equation}
By Remark 3.8 of \cite{MPVZ}, if $(\Delta^+)^\prime(\frg, \frt)$ is chosen instead of $\Delta^+(\frg, \frt)$, one has
$$
{\rm DI}^\prime(\pi)=(-1)^{\# ((\Delta^+)^{\prime}(\frp, \frt) \setminus \Delta^+(\frp, \frt)) }{\rm DI}(\pi).
$$
Therefore, the Dirac index is well-defined up to a sign. Moreover, by Proposition 3.12 of \cite{MPV},
$$
{\rm DI}(\pi)=\pi\otimes {\rm Spin}^+_{G}- \pi\otimes {\rm Spin}^-_{G}.
$$
It turns out that the Dirac index preserves short exact sequences and has nice behavior with respect to coherent continuation \cite{MPV,MPVZ}. This idea is pursued in \cite{DW4} to compute the Dirac index of all weakly fair $\aq(\lambda)$-modules for $G = U(p,q)$.

In this paper, we study a larger class of unitary representations
constructed in \cite[Section 8]{Vog84} (see Theorem \ref{thm-Vog84} below).
This includes all weakly fair $\aq(\lambda)$-modules and unipotent representations as two extreme cases.
We will compute the Dirac index for all such representations for $G = Sp(2n, \bbR)$ and $SO^*(2n)$,
and conjecture that composition factors of these representations should exhaust the Dirac series of $G$.

%They are conjectured to cover a substantial part of $\widehat{G}$, especially when the
%infinitesimal character satisfies some integrality conditions.

\medskip
The paper is organized as follows: In Section \ref{sec-pre}, we provide the preliminaries for Dirac index and cohomological induction. We also describe the class of representations that we are interested in. In Sections \ref{sec-sp2nr} and \ref{sec-sostar2n}, we study the Dirac index of all unipotent representations of $G = Sp(2n,\mathbb{R})$ and $SO^*(2n)$. In Section \ref{sec-general}, we compute the Dirac index of all unitary representations of $G$ covered in Theorem \ref{thm-Vog84}. Finally, in Section \ref{sec-parity}, we reveal the relation between the possible cancellations in $H_D(\pi)$, and the parities of the spin-lowest $K$-types of $\pi$, where $\pi$ is irreducible unitary. See Theorem \ref{thm-parity-cancellation}.

\section{Preliminaries}\label{sec-pre}

We continue with the notation in the introduction. In particular, $G$ is equal rank.

\subsection{Dirac index}\label{sec-DI}
In this section, we choose \emph{the} Vogan diagram for $\frg_0$ as Appendix C of Knapp \cite{Kn}. Then we have actually chosen a
$$
\Delta^+(\frg, \frt)=\Delta^+(\frk, \frt)\cup \Delta^+(\frp, \frt).
$$
The Vogan diagram for $\frg_0$ has a unique black dot, which stands for a simple root $\gamma$. We denote the fundamental weight corresponding to $\gamma$ by $\widetilde{\zeta}$. Put
\begin{equation}\label{zeta}
\zeta=\frac{2}{\|\gamma\|^2} \widetilde{\zeta}.
\end{equation}

The following result should be well-known, and it can be obtained by going through the classification of real simple Lie algebras.

\begin{lemma} Let $\beta$ be the highest root in $\Delta^+(\frg, \frt)$. Then $G/K$ is  Hermitian symmetric if and only if the unique black dot simple root has coefficient $1$ in $\beta$. Otherwise, the unique black dot simple root must have coefficient $2$ in $\beta$.
As a consequence, we always have that for any $\alpha\in\Delta(\frg, \frt)$,
\begin{equation}\label{zeta-k-p}
\langle \alpha, \zeta\rangle \mbox{ is even} \Leftrightarrow  \alpha\in \Delta(\frk, \frt), \quad \langle \alpha, \zeta\rangle \mbox{ is odd} \Leftrightarrow  \alpha\in \Delta(\frp, \frt).
\end{equation}
\end{lemma}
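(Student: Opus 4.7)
The plan is to treat the two assertions separately: first establish the dichotomy $1$ vs $2$ for the coefficient of $\gamma$ in $\beta$ by a case-by-case reading of Knapp's Appendix~C, and then derive the parity statement (\ref{zeta-k-p}) from a one-line inner product computation combined with the standard compactness criterion for a Vogan diagram carrying a single black dot.

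For the coefficient dichotomy, I would enumerate the equal-rank simple real forms whose Vogan diagram in Knapp's presentation has a unique black dot. In the Hermitian symmetric cases ($\frsu(p,q)$, $\frso^*(2n)$, $\frsp(2n, \bbR)$, $\frso(2, q)$, and the Hermitian real forms of $E_6$ and $E_7$) the black dot lies at a node whose coefficient in the highest root $\beta$ is $1$; in the remaining equal-rank cases ($\frsp(p,q)$, the non-Hermitian equal-rank $\frso(p,q)$, and the non-Hermitian equal-rank real forms of $E_6, E_7, E_8, F_4, G_2$) it lies at a node whose coefficient in $\beta$ is exactly $2$. The key content here is that the black dot never lands on one of the high-coefficient interior nodes of types $E, F, G$; this has to be verified line by line in Knapp's tables.

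For the parity consequence, I would compute $\langle \alpha, \zeta\rangle$ directly. The defining property $\frac{2\langle \widetilde{\zeta}, \alpha_j\rangle}{\|\alpha_j\|^2} = \delta_{j, i_0}$ (with $\alpha_{i_0} = \gamma$), combined with the normalization $\zeta = \frac{2}{\|\gamma\|^2}\widetilde{\zeta}$, yields $\langle \zeta, \alpha_j\rangle = \delta_{j, i_0}$, whence $\langle \alpha, \zeta\rangle = n_{i_0}$ for $\alpha = \sum_j n_j \alpha_j$. On the other hand, the standard Vogan diagram recipe says that a root $\alpha$ is compact iff $n_{i_0}$ is even, since the Cartan involution acts on $\frg_\alpha$ by $(-1)^{n_{i_0}}$ when $\gamma$ is the unique black dot. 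Combining the two gives (\ref{zeta-k-p}); the bound $|n_{i_0}| \leq 2$ supplied by the first part makes the parity unambiguous.

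The main obstacle is that I do not see a uniform conceptual argument for the $1$ vs $2$ dichotomy. One direction is conceptually clean: if the coefficient of $\gamma$ in $\beta$ equals $1$, then $\zeta$ itself serves as a central element of $\frk$ with adjoint spectrum $\pm 1$ on $\frp$, giving a Hermitian structure on $G/K$. Ruling out coefficient $\geq 3$ in all non-Hermitian equal-rank cases, however, appears to require walking through the classification; I do not see a shortcut beyond the case analysis.
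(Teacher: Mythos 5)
Your proposal is correct and matches the paper's argument: the paper likewise settles the $1$-versus-$2$ dichotomy by inspecting the classification in Knapp's Appendix~C, and the parity statement \eqref{zeta-k-p} follows exactly as you compute, from $\langle\alpha,\zeta\rangle = n_{i_0}(\alpha)$ together with the standard Vogan-diagram fact that $\theta$ acts on $\frg_{\alpha}$ by $(-1)^{n_{i_0}(\alpha)}$. As for the obstacle you flag, Borel--de~Siebenthal theory does supply a uniform argument: an inner involution of a compact simple Lie algebra arises from deleting a node of mark $1$ or $2$ in the extended Dynkin diagram, which gives the dichotomy without enumerating real forms, and the mark-$1$ case is exactly the Hermitian one (since $\frk$ then acquires a one-dimensional center).
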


\begin{lemma}\label{lemma-Phiw}
For any $w\in W(\frg, \frt)^1$, the set $\Phi_w:=w\Delta^-(\frg, \frt)\cap \Delta^+(\frg, \frt)$ must be contained in $\Delta^+(\frp, \frt)$.
\end{lemma}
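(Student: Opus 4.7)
The plan is to reformulate the defining condition $w\caC_{\frg}\subseteq\caC$ as a statement about roots and then derive the claim by contradiction. First I would note that $w\caC_{\frg}\subseteq\caC$ is equivalent to requiring $\langle w\lambda,\alpha\rangle\geq 0$ for every $\lambda\in\overline{\caC_{\frg}}$ and every $\alpha\in\Delta^+(\frk,\frt)$. Moving $w$ to the other side of the inner product, this says $\langle\lambda,w^{-1}\alpha\rangle\geq 0$ for all $\lambda\in\overline{\caC_{\frg}}$, which forces $w^{-1}\alpha\in\Delta^+(\frg,\frt)$. Consequently
\[
w\in W(\frg,\frt)^1\quad\Longleftrightarrow\quad w^{-1}\Delta^+(\frk,\frt)\subseteq\Delta^+(\frg,\frt).
\]

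Now take any $\beta\in\Phi_w$, so $\beta\in\Delta^+(\frg,\frt)$ and $w^{-1}\beta\in\Delta^-(\frg,\frt)$. If $\beta$ were a compact root, i.e.\ $\beta\in\Delta^+(\frk,\frt)$, the equivalence just established would give $w^{-1}\beta\in\Delta^+(\frg,\frt)$, contradicting $w^{-1}\beta\in\Delta^-(\frg,\frt)$. Hence $\beta\in\Delta^+(\frg,\frt)\setminus\Delta^+(\frk,\frt)=\Delta^+(\frp,\frt)$, which is exactly the assertion of the lemma. The only genuine step is the equivalent reformulation of the chamber condition; after that the argument is an immediate one-line contradiction, so I do not anticipate any real obstacle.
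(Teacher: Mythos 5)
Your proof is correct and follows essentially the same route as the paper's: both arguments derive a contradiction from the assumption that a compact positive root lies in $\Phi_w$, using the $W$-invariance of the inner product together with the chamber condition $w\caC_{\frg}\subseteq\caC$ (the paper tests this condition against $\rho$, while you first package it as the equivalent statement $w^{-1}\Delta^+(\frk,\frt)\subseteq\Delta^+(\frg,\frt)$ and then apply it). The repackaging is a matter of presentation, not of substance.
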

\begin{proof}
Suppose there exists a $w\in W(\frg, \frt)^1$ such that $\Phi_w$ is not contained in $\Delta^+(\frp, \frt)$. Then we can find a root $\alpha\in \Delta^+(\frk, \frt)$ such that $\alpha\in \Phi_w$. By the definition of $\Phi_w$, we can further find a root $\beta\in\Delta^+(\frg, \frt)$ such that $\alpha=-w\beta$. That is, $-\alpha=w\beta$. Now,
$$
\langle -\alpha, w\rho\rangle=\langle w\beta, w\rho\rangle=\langle \beta, \rho\rangle>0.
$$
Therefore, $\langle w\rho, \alpha\rangle<0$. This contradicts to the assumption that $w\in W(\frg, \frt)^1$ since $w\rho$ should be dominant for $\Delta^+(\frk, \frt)$.
\end{proof}

The first named author learned the following result from Pand\v zi\'c. It should be well-known to the experts.
\begin{lemma}\label{lemma-even-odd-spinG}
 We have the following decompositions of the spin module into $\frk$-types:
\begin{equation}\label{spin-Weyl}
{\rm Spin}^+_G =\bigoplus_{\scriptsize l(w)\, \mbox{even}} E_{w\rho-\rho_c}, \quad
{\rm Spin}^-_G =\bigoplus_{\scriptsize l(w)\, \mbox{odd}} E_{w\rho-\rho_c},
\end{equation}
where $w$ runs over the set $W(\frg, \frt)^1$.
\end{lemma}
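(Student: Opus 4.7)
The plan is to combine the classical $\widetilde{K}$-decomposition of the full spin module with a character-theoretic computation to separate the two halves.

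First, I would invoke the standard identity (going back to Kostant; see e.g.\ the Huang--Pand\v zi\'c book) that as $\widetilde{K}$-modules,
$$
{\rm Spin}_G \cong \bigoplus_{w\in W(\frg,\frt)^1} E_{w\rho-\rho_c},
$$
with each isotypic component of multiplicity one. Since the $\frk$-action on $\bigwedge\frp^+$ preserves the exterior grading, both ${\rm Spin}_G^+$ and ${\rm Spin}_G^-$ are $\frk$-stable, and the multiplicity-one property forces each summand $E_{w\rho-\rho_c}$ to embed into exactly one of them. The task reduces to deciding which.

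Second, I would compute the virtual $\frk$-character
$$
{\rm Spin}_G^+ - {\rm Spin}_G^- = e^{-\rho_n}\prod_{\alpha\in\Delta^+(\frp,\frt)}(1-e^{\alpha}) = \prod_{\alpha\in\Delta^+(\frp,\frt)}\bigl(e^{-\alpha/2}-e^{\alpha/2}\bigr),
$$
and then rewrite the right-hand side using Weyl denominator identities. Starting from
$$
\prod_{\alpha\in\Delta^+(\frg,\frt)}\bigl(e^{\alpha/2}-e^{-\alpha/2}\bigr) = \sum_{w\in W(\frg,\frt)}(-1)^{l(w)}e^{w\rho},
$$
I would divide by the analogous denominator for $\Delta^+(\frk,\frt)$, use the unique factorization $W(\frg,\frt) = W(\frg,\frt)^1\cdot W(\frk,\frt)$ with additive lengths, and apply the Weyl character formula to each $E_{w_1\rho-\rho_c}$ for $w_1\in W(\frg,\frt)^1$. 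The result is
$$
\prod_{\alpha\in\Delta^+(\frp,\frt)}\bigl(e^{\alpha/2}-e^{-\alpha/2}\bigr) = \sum_{w\in W(\frg,\frt)^1}(-1)^{l(w)}\chi_{E_{w\rho-\rho_c}}.
$$

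Combining the two identities and matching $\widetilde K$-isotypic components on both sides pins down the assignment of each $E_{w\rho-\rho_c}$ to ${\rm Spin}_G^+$ or ${\rm Spin}_G^-$, giving \eqref{spin-Weyl}. As a consistency check, Lemma \ref{lemma-Phiw} combined with the identity
$$
w\rho-\rho_c \;=\; \rho_n - \langle\Phi_w\rangle \;=\; -\rho_n + \langle\Delta^+(\frp,\frt)\setminus\Phi_w\rangle
$$
(which follows from $\rho-w\rho=\langle\Phi_w\rangle$ and $2\rho_n=\langle\Delta^+(\frp,\frt)\rangle$) exhibits the highest weight of each summand at exterior degree $|\Delta^+(\frp,\frt)|-l(w)$ inside $\bigwedge\frp^+\otimes\bbC_{-\rho_n}$, in agreement with the parity recorded by the virtual character. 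The only delicate point I anticipate is bookkeeping the global sign $(-1)^{|\Delta^+(\frp,\frt)|}$ produced when rewriting $\prod(e^{-\alpha/2}-e^{\alpha/2})$ as $\pm\prod(e^{\alpha/2}-e^{-\alpha/2})$, and verifying that it reconciles with the convention fixed in \eqref{spin-module-even-odd} so as to yield the parities stated in the lemma.
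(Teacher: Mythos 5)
Your proof takes a genuinely different route. The paper's argument works weight-by-weight using the element $\zeta$ introduced just before: Lemma~\ref{lemma-Phiw} gives $\Phi_w\subseteq\Delta^+(\frp,\frt)$, then writing $w\rho-\rho_c=\rho_n-\langle\Phi\rangle$ and pairing with $\zeta$ via~\eqref{zeta-k-p} gives $l(w)\equiv|\Phi|\pmod 2$; since the other weights of $E_{w\rho-\rho_c}$ differ from the highest weight by $\zeta$-even amounts, each summand lands in a single parity component. You instead evaluate the virtual $\frk$-character ${\rm Spin}_G^+-{\rm Spin}_G^-$ globally by dividing Weyl denominators, arriving at the Kostant-type identity $\prod_{\alpha\in\Delta^+(\frp,\frt)}(e^{\alpha/2}-e^{-\alpha/2})=\sum_{w\in W(\frg,\frt)^1}(-1)^{l(w)}\chi_{E_{w\rho-\rho_c}}$, and then use multiplicity-one to promote the virtual-character equality to an honest $\frk$-stable splitting. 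Both compute the same parity. Your route is more classical and bypasses the $\zeta$-device entirely (which is, in effect, Lemma~\ref{lemma-Phiw} repackaged); the paper's route avoids the Weyl character formula and keeps the argument local to a single weight.

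Two comments. First, a small correction: with the paper's $W(\frg,\frt)^1=\{w:w\caC_\frg\subseteq\caC\}$, equivalently $w^{-1}\Delta^+(\frk,\frt)\subseteq\Delta^+(\frg,\frt)$, the length-additive factorization is $W(\frg,\frt)=W(\frk,\frt)\cdot W(\frg,\frt)^1$, not $W(\frg,\frt)^1\cdot W(\frk,\frt)$; you need $w=uw_1$ with $u\in W(\frk,\frt)$ so that $\sum_u(-1)^{l(u)}e^{u(w_1\rho)}$ is the Weyl numerator of $E_{w_1\rho-\rho_c}$. Second, the global sign $(-1)^{|\Delta^+(\frp,\frt)|}$ that you flag at the end is a real discrepancy, not just bookkeeping: the highest weight $w\rho-\rho_c=-\rho_n+\langle\Delta^+(\frp,\frt)\setminus\Phi_w\rangle$ lives in $\bigwedge^{|\Delta^+(\frp,\frt)|-l(w)}\frp^+\otimes\bbC_{-\rho_n}$, so under the convention~\eqref{spin-module-even-odd} the summand $E_{w\rho-\rho_c}$ sits in ${\rm Spin}_G^\pm$ according to the parity of $|\Delta^+(\frp,\frt)|-l(w)$, not of $l(w)$. (For $SL(2,\bbR)$ with $\Delta^+(\frp,\frt)=\{\alpha\}$, the summand $E_{\rho_n}$ corresponds to $w=1$ yet lies in $\bigwedge^1\frp^+\otimes\bbC_{-\rho_n}\subseteq{\rm Spin}_G^-$.) The paper's own proof also produces the degree $|\Delta^+(\frp,\frt)|-|\Phi|$, so the factor $(-1)^{|\Delta^+(\frp,\frt)|}$ has silently been dropped in the lemma as stated. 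Since the Dirac index is only defined up to an overall sign and the lemma is used only through relative parities, this constant is harmless downstream, but your caution is warranted: the decomposition~\eqref{spin-Weyl} should be read modulo that global sign.
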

\begin{proof}
As we know,
$$
{\rm Spin}_G=\bigwedge\frp^+\bigotimes \bbC_{-\rho_n}\cong \bigoplus_{w\in W(\frg, \frt)^1} V_{w\rho-\rho_c}.
$$
It remains to separate the even part and the odd part of the spin module.
Take an arbitrary $w\in W(\frg, \frt)^1$. Then there exists a subset $\Phi$ of $\Delta^+(\frp, \frt)$ such that
$$
w\rho - \rho_c=\rho_n - \langle\Phi\rangle,
$$
where $\langle\Phi\rangle$ stands for the sum of roots in $\Phi$. One deduces from the above equality that
$$
\langle\Phi_w\rangle=\langle\Phi\rangle.
$$
Taking inner products of the two sides of the above equality with $\zeta$, we have that
\begin{equation}\label{spin-w}
l(w)\equiv |\Phi| \quad ({\rm mod} \ 2)
\end{equation}
by recalling \eqref{zeta-k-p} and Lemma \ref{lemma-Phiw}.
Note that any weight of $E_{w\rho- \rho_c}$ can be obtained by subtracting some roots of $\Delta^+(\frk, \frt)$ from $w\rho-\rho_c$. Now the desired result follows from \eqref{zeta-k-p}.
\end{proof}

Let $\pi$ be an irreducible unitary $(\frg, K)$ module. Assume that $H_D(\pi)$ is non-zero. Then any $\widetilde{K}$-type $E_{\gamma}$ of $H_D(\pi)$ lives in either $\pi\otimes {\rm Spin}_G^+$ or $\pi\otimes {\rm Spin}_G^-$. We assign a sign to $E_{\gamma}$ as follows: the sign is $+1$ in the first case, it is $-1$ otherwise.
We will refer to this sign as the \emph{sign} of $E_{\gamma}$ in $H_D(\pi)$.

\subsection{Cohomologically induced modules}\label{sec-coho}
Firstly, let us fix an element $H\in \frt_{\bbR}$ which is dominant for $\Delta^+(\frk, \frt)$, and define the $\theta$-stable parabolic subalgebra
\begin{equation}\label{def-theta-stable-parabolic}
\frq=\frl\oplus \fru
\end{equation}
as the nonnegative eigenspaces of ${\rm ad}(H)$. The Levi subalgebra $\frl$ of $\frq$ is the zero eigenspace of ${\rm ad}(H)$, while the nilradical $\fru$ of $\frq$ is the sum of positive eigenspaces of ${\rm ad}(H)$. If we denote  by $\overline{\fru}$ the sum of negative eigenspaces of ${\rm ad}(H)$, then
$$
\frg=\overline{\fru} \oplus \frl \oplus \fru.
$$
Let $L$ be the normalizer of $\frq$ in $G$. Then $L\cap K$ is a maximal compact subgroup of $L$. Let $\mathfrak{z}$ be the center of $\frl$.

We choose a positive root system $(\Delta^+)^\prime(\frg, \frt)$ containing $\Delta(\fru, \frt)$ and $\Delta^+(\frk, \frt)$. Set
$$
\Delta^+(\frl, \frt)=\Delta(\frl, \frt)\cap (\Delta^+)^\prime(\frg, \frt).
$$
Denote by $\rho^L$ (resp., $\rho_c^L$) the half sum of positive roots in $\Delta^+(\frl, \frt)$ (resp., $\Delta^+(\frl\cap\frk, \frt)$). Let $\rho_n^L=\rho^L-\rho_c^L$. Denote by $\rho(\fru)$ (resp., $\rho(\fru\cap\frp)$, $\rho(\fru\cap\frk)$) the half sum of roots in $\Delta(\fru, \frt)$ (resp., $\Delta(\fru\cap \frp, \frt)$, $\Delta(\fru\cap\frk, \frt)$).
The following relations hold
\begin{equation}\label{half-sum-relations}
\rho^{(j)}=\rho^L+\rho(\fru), \quad \rho_c=\rho_c^L+\rho(\fru\cap\frk), \quad \rho_n^{(j)}=\rho_n^L+\rho(\fru\cap\frk).
\end{equation}

The cohomological induction functors $\caL_j(\cdot)$ and $\caR^j(\cdot)$ lift an admissible $(\frl, L\cap K)$ module $Z$ to $(\frg, K)$ modules, and the most interesting case happens at the middle degree $S:=\dim (\fru\cap\frk)$. Assume that $Z$ has real infinitesimal character $\Lambda_Z\in\frt_{\bbR}^*$.
After \cite{KV}, we say that $Z$ is in the \emph{good range} (relative to $\frq$ and $\frg$) if
\begin{equation}\label{good-range}
\langle \Lambda_Z+\rho(\fru), \alpha \rangle > 0, \quad \forall \alpha\in \Delta(\fru, \frt).
\end{equation}
We say that $Z$ is in the \emph{weakly good range} if
\begin{equation}\label{weakly-good-range}
\langle \Lambda_Z+\rho(\fru), \alpha \rangle \geq 0, \quad \forall \alpha\in \Delta(\fru, \frt).
\end{equation}
Moreover, $Z$ is said to be in the \emph{fair range} if
\begin{equation}\label{fair-range}
\langle \Lambda_Z+\rho(\fru), \alpha|_{\mathfrak{z}} \rangle > 0, \quad \forall \alpha\in \Delta(\fru, \frt).
\end{equation}
We say that $Z$ is in the \emph{weakly fair range} if
\begin{equation}\label{weakly-fair-range}
\langle \Lambda_Z+\rho(\fru), \alpha|_{\mathfrak{z}} \rangle \geq 0, \quad \forall \alpha\in \Delta(\fru, \frt).
\end{equation}

When the inducing module $Z$ is a one-dimensional unitary character $\bbC_{\lambda}$, we denote the corresponding $(\frg, K)$-module $\caL_S(Z)$ by $A_{\frq}(\lambda)$, which has infinitesimal character $\lambda+\rho^\prime$.
Good range $A_\frq(\lambda)$ modules must be non-zero, irreducible, and unitary \cite{KV}.
They play an important role in the unitary dual. Indeed, as shown by Salamanca-Riba \cite{Sa}, any irreducible unitary $(\frg, K)$-module with a real, integral, and strongly regular infinitesimal character $\Lambda$ must be isomorphic to an $A_{\frq}(\lambda)$ module in the good range.  Here  $\Lambda$ being \emph{strongly regular} means that
$$
\langle \Lambda-\rho^\prime, \alpha \rangle \geq 0, \quad \forall \alpha\in (\Delta^+)^\prime(\frg, \frt).
$$

Note that the module $\aq(\lambda)$ is weakly fair if
\begin{equation}\label{Aqlambda-weakly-fair}
\langle \lambda+\rho(\fru), \alpha \rangle \geq 0, \quad \forall \alpha\in \Delta(\fru, \frt).
\end{equation}
A weakly fair $A_{\frq}(\lambda)$ module can be zero or reducible. However, whenever it is non-zero, it must be unitary \cite{KV}. More importantly, many singular unitary representations can be realized as (a composition factor of) a weakly fair $A_{\frq}(\lambda)$ module.

Based on \cite{MPV,MPVZ}, the following result computes the Dirac index of weakly fair $A_{\frq}(\lambda)$.

\begin{thm}\label{thm-DI-Aqlambda} \emph{(Theorem 4.3 of \cite{DW4})}
The Dirac index of weakly fair $\aq(\lambda)$ is equal to
$$
{\rm DI}(A_{\frq}(\lambda)) = \sum_{w \in W(\mathfrak{l},\mathfrak{t})^1} {\rm det}(w) \widetilde{E}_{w(\lambda + \rho)},$$
where
\begin{equation}\label{E-tilde-mu}
\widetilde{E}_{\mu} = \begin{cases}
0 & \text{if}\ \mu\ \text{is}\ \Delta(\mathfrak{k},\mathfrak{t})\text{-singular} \\
\mathrm{det}(w) E_{w\mu - \rho_c} & \text{if } \exists w \in W(\mathfrak{k},\mathfrak{t})\ \text{s.t. } w\mu\ \text{is dominant regular for } \Delta^+(\mathfrak{k},\mathfrak{t})
\end{cases}.
\end{equation}
\end{thm}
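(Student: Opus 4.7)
The plan is to exploit two formal properties of the Dirac index established in \cite{MPV,MPVZ}: the identity ${\rm DI}(\pi)=\pi\otimes{\rm Spin}_G^+-\pi\otimes{\rm Spin}_G^-$ as virtual $\widetilde K$-modules (Proposition 3.12 of \cite{MPV}), and compatibility with short exact sequences and coherent continuation. Together these reduce the theorem to a character-level identity that I would verify first in the good range and then propagate to the weakly fair range.

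For the good-range step, I would factor the spin module on the Levi side. Writing $\frp=(\fru\cap\frp)\oplus(\frl\cap\frp)\oplus(\overline{\fru}\cap\frp)$ and using $\rho_n=\rho(\fru\cap\frp)+\rho_n^L$ produces the $\widetilde{L\cap K}$-isomorphism
\[
{\rm Spin}_G\cong {\rm Spin}_L\otimes\bigwedge(\fru\cap\frp)\otimes\bbC_{-\rho(\fru\cap\frp)},
\]
compatible with the $\bbZ/2$-gradings. Applying Lemma \ref{lemma-even-odd-spinG} to the equal-rank pair $(\frl,\frl\cap\frk)$ gives
\[
{\rm Spin}_L^+-{\rm Spin}_L^-=\sum_{w\in W(\frl,\frt)^1}\det(w)\,E^{L\cap K}_{w\rho^L-\rho_c^L}.
\]
Combining this with the cohomological-induction character formula for $\aq(\lambda)$, and performing the Weyl-denominator cancellation against $\sum_k(-1)^k\bigwedge^k(\fru\cap\frp)$, the tensor product $\aq(\lambda)\otimes({\rm Spin}_G^+-{\rm Spin}_G^-)$ collapses to a signed sum indexed by $w\in W(\frl,\frt)^1$. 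The highest weight contributed by the $w$-th summand is
\[
\lambda+w\rho^L+\rho(\fru\cap\frp)-\rho_c^L=w(\lambda+\rho)-\rho_c,
\]
using $w\lambda=\lambda$, $w\rho(\fru)=\rho(\fru)$, $\rho=\rho^L+\rho(\fru)$ and $\rho_c=\rho_c^L+\rho(\fru\cap\frk)$. When this weight is $\Delta(\frk,\frt)$-singular the summand vanishes; otherwise it is reflected to dominance by an element of $W(\frk,\frt)$ at the cost of an additional sign, matching exactly the prescription \eqref{E-tilde-mu} for $\widetilde E_\mu$.

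Finally, the good-range formula extends to the weakly fair range by coherent continuation in $\lambda$, viewed as a varying character of the center $\mathfrak{z}\subset\frl$; this is legitimate since the Dirac index is compatible with coherent families by \cite{MPVZ}, and both sides of the claimed identity are manifestly coherent. The main technical obstacle is the Weyl-denominator cancellation step, which requires a delicate matching of the $K$-character of $\aq(\lambda)$ against the $\widetilde{L\cap K}$-structure of the spin module twisted by $\bbC_\lambda$; this is the essential calculation carried out in \cite[Theorem 4.3]{DW4}.
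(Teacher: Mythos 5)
This statement is quoted verbatim from Theorem 4.3 of \cite{DW4}; the paper at hand offers no internal proof of it, so the comparison must be with the cited source. Your outline is plausible and, as far as I can tell, matches the strategy actually used in \cite{DW4}: factor ${\rm Spin}_G\cong{\rm Spin}_L\otimes\bigwedge(\fru\cap\frp)\otimes\bbC_{-\rho(\fru\cap\frp)}$ compatibly with the $\bbZ/2$-grading, use the Euler characteristic of cohomological induction to collapse $A_\frq(\lambda)\otimes({\rm Spin}_G^+-{\rm Spin}_G^-)$ to a signed sum over $W(\frl,\frt)^1$, check that each term has highest weight $w(\lambda+\rho)-\rho_c$, and then push from good range to weakly fair range by coherent continuation in $\lambda$ using \cite{MPV}. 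Your algebraic identity $\lambda+w\rho^L+\rho(\fru\cap\frp)-\rho_c^L=w(\lambda+\rho)-\rho_c$ is correct (using that $\lambda$ and $\rho(\fru)$ are $W(\frl,\frt)$-fixed), and the sign bookkeeping via the $\widetilde{E}_\mu$ notation is handled correctly.

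However, as a \emph{proof} there is a genuine gap: the step you label the "Weyl-denominator cancellation" — matching the Blattner/Euler-characteristic expansion of $A_\frq(\lambda)$ in the Grothendieck group of $K$-modules against the $\widetilde{L\cap K}$-structure of the factored spin module and showing the alternating sum $\sum_k(-1)^k\bigwedge^k(\fru\cap\frp)$ really does produce precisely one term per $w\in W(\frl,\frt)^1$ with coefficient $\det(w)$ — is not carried out. You acknowledge this and explicitly defer it to \cite[Theorem 4.3]{DW4}, which is the very statement you are asked to prove. That circular deferral means the proposal is an accurate \emph{sketch} of where the content lies, but it does not supply a proof. Also note one small point worth stating explicitly in a real writeup: the factorization of ${\rm Spin}_G$ uses a positive system $(\Delta^+)^\prime(\frg,\frt)\supset\Delta(\fru,\frt)$, which may differ from the fixed $\Delta^+(\frg,\frt)$; by Remark 3.8 of \cite{MPVZ} this only changes the overall sign of ${\rm DI}$, but the sign must be tracked to match the stated formula exactly.
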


\subsection{A larger class of unitary modules}
As stated in the previous section, all weakly fair $A_{\frq}(\lambda)$-modules are unitarty. Indeed,
\cite{Vog84} obtained a unitarity theorem for a larger class of representations.

\begin{thm}[\cite{Vog84} Proposition 8.17] \label{thm-Vog84}
Let $G$ be a reductive Lie group, and $\mathfrak{q} = \mathfrak{l} + \mathfrak{u}$ be a $\theta$-stable parabolic subalgebra of $\mathfrak{g}$ such that
$$\mathfrak{l} \cong \mathfrak{g}_{A_l} + \dots + \mathfrak{g}_{A_1} + \mathfrak{g}_m',$$
where each $\mathfrak{g}_{A_t}$ is of Type $A$, and $\mathfrak{g}_m'$ is not of Type $A$ with rank $m$
(here we allow $l = 0$, i.e., there are no Type $A$ factors and $\mathfrak{l} = \mathfrak{g}$; or $m = 0$, i.e, all
factors are of Type $A$).

Suppose $Z$ is an $(\mathfrak{l}, L \cap K)$ module given by the tensor product of
unitary characters of Type $A$ and a \emph{weakly unipotent representation} $\pi_u$ of $G_m'$ \cite[Definition 8.16]{Vog84}, such that the infinitesimal character $\Lambda_Z$ of $Z$ is in the weakly fair range. Then $\mathcal{L}_S(Z)$ is an unitary $(\mathfrak{g},K)$ module.
\end{thm}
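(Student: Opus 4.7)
The plan is to combine the intrinsic unitarity of the inducing module $Z$ with the preservation-of-unitarity theorem for cohomological induction, and then to extend from the weakly good to the weakly fair range by an analytic continuation / signature deformation argument. First I would verify that $Z$ itself is a unitary $(\mathfrak{l}, L\cap K)$-module: the weakly unipotent factor $\pi_u$ on $G_m'$ is unitary by the defining conditions of weak unipotency in \cite[Definition 8.16]{Vog84}, which force the infinitesimal character to attain Parthasarathy's Dirac inequality sharply on the lowest $K$-types; unitary characters on the Type-$A$ factors are tautologically unitary; and a tensor product of unitary representations of the factors of $L$ is a unitary representation of $L$.

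When $\Lambda_Z$ lies in the \emph{weakly good} range relative to $\mathfrak{q}$, the Knapp--Vogan preservation-of-unitarity theorem (cf.~\cite{KV}) already delivers unitarity of $\mathcal{L}_S(Z)$ directly, so the substance of the theorem lies in bridging the gap between the weakly fair and weakly good ranges. To do this, I would introduce a one-parameter family $Z_t$ obtained by twisting the Type-$A$ character part of $Z$ by $t\nu$, where $\nu \in \mathfrak{z}(\mathfrak{l})^*$ is strictly dominant with respect to $\Delta(\mathfrak{u},\mathfrak{t})|_{\mathfrak{z}(\mathfrak{l})}$. For $t \gg 0$ the parameter $\Lambda_{Z_t} = \Lambda_Z + t\nu$ is in the good range, so $\mathcal{L}_S(Z_t)$ is non-zero and unitary, and as $t \searrow 0$ one recovers $\mathcal{L}_S(Z)$. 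The signature character of the invariant Hermitian form on $\mathcal{L}_S(Z_t)$ varies continuously in $t$ on each $K$-isotypic component, and the Knapp--Vogan bottom-layer theorem provides a distinguished $K$-type on which the form remains positive definite throughout the deformation.

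The main obstacle is controlling the signature as $t$ crosses walls of the good-range chamber, where $\mathcal{L}_S(Z_t)$ may degenerate or acquire new composition factors; a priori this is where negative eigenvalues of the Hermitian form could appear. Because $\mathfrak{l}$ splits as Type-$A$ factors plus $\mathfrak{g}_m'$, this control can be carried out factor by factor: on each $A$-piece, the reducibility pattern of weakly fair $A_{\mathfrak{q}}(\lambda)$-modules is well understood through Speh--Vogan's analysis for $\mathrm{GL}$-type blocks and the weakly-fair unitarity theorem for Type $A$ is available, while on $\mathfrak{g}_m'$ the module $\pi_u$ is held fixed throughout the deformation. Combining these factor-by-factor signature computations with the bottom-layer argument shows that no $K$-type can acquire a negative eigenvalue of the invariant form in the limit $t \to 0$, hence the limiting Hermitian form on $\mathcal{L}_S(Z)$ is positive semi-definite; passing to the quotient by its radical yields the desired unitary $(\mathfrak{g},K)$-module.
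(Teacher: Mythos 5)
The theorem you are asked to prove is not proved in this paper at all: the authors cite it verbatim as Vogan's Proposition 8.17 from \cite{Vog84}, so there is no internal proof to compare against. Your proposal is therefore an attempt to reconstruct Vogan's original argument, and your high-level strategy (reduce to the weakly good range via Knapp--Vogan, then extend by a signature-deformation argument along a one-parameter family $Z_t$ that shifts only the Type-$A$ characters) is indeed the shape of Vogan's argument, so the overall plan is sound.

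Two concrete problems, however. First, your claim that the defining conditions of weak unipotency in \cite[Definition 8.16]{Vog84} ``force the infinitesimal character to attain Parthasarathy's Dirac inequality sharply on the lowest $K$-types,'' and that unitarity of $\pi_u$ follows from this, inverts the logic. In Vogan's definition a weakly unipotent representation is \emph{assumed} to be unitary from the outset; the defining condition is a minimality property of the infinitesimal character under tensoring with finite-dimensional representations, not a Dirac-inequality condition. Unitarity of $Z$ is thus an input, not an output, and deriving it as you propose is circular. Second, the heart of the theorem is exactly the signature control as $t\searrow 0$, and your treatment of it is too thin. Saying the signature ``varies continuously on each $K$-isotypic component'' and invoking the bottom layer gives positivity only on the bottom-layer $K$-types; it does not by itself rule out negative eigenvalues appearing on other $K$-types when the parameter crosses a wall and $\mathcal{L}_S(Z_t)$ gains a new composition factor. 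The ``factor by factor'' reduction you sketch operates at the level of the inducing module $Z_t$ for $\mathfrak{l}$, but the degeneration you must control happens at the level of the $(\mathfrak{g},K)$-module $\mathcal{L}_S(Z_t)$; passing from the former to the latter requires Vogan's signature theorem and the Jantzen-filtration wall-crossing machinery from \cite{Vog84}, not merely a continuity statement. Without invoking that specific input (or the equivalent analytic-continuation results of \cite{KV}), the argument has a genuine gap precisely at the point that makes the theorem nontrivial.
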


If the weakly unipotent representation in Theorem \ref{thm-Vog84} is taken to be the trivial representation, or $\mathfrak{g}_m'$ does not exist in $\mathfrak{l}$, then we are in the
setting of $\aq(\lambda)$ modules in the previous subsection. On the other extreme, if $\mathfrak{l} = \mathfrak{g} = \mathfrak{g}_m'$, we obtain all (weakly) unipotent representations.

%Let us recall the conjecture of Barbasch and Pand\v zi\'c of the structure of $\widehat{G}^d$ given in \cite{BP10}.
%
%\begin{conj} \label{conj-BP}
%Let $G$ be a real reductive Lie group, and $\pi$ be an irreducible unitary
%representation with infinitesimal character $\Lambda$ such that
%\begin{equation}\label{HP-condition}
%\Lambda=w(\gamma +\rho_c) \quad\mbox{for some } w\in W(\frg, \frh)  .
%\end{equation}
%Here $\gamma$ is the highest weight of some $\widetilde{K}$-type in
%$\pi\otimes {\rm Spin}_G$. Then $\pi \in \widehat{G}^d$ if and only if $\pi$ is cohomologically induced from
%a unitary character of Type $A$ tensored with a unipotent representation with nonzero Dirac cohomology such that the inducing module is weakly fair.
%\end{conj}

We are interested in representations in Theorem \ref{thm-Vog84} with non-zero Dirac cohomology. A necessary condition for such a module to have non-zero Dirac cohomology
is that the weakly unipotent representation $\pi_u$ of $\mathfrak{g}_m'$ has infinitesimal character
satisfying Theorem \ref{thm-HP}. For $G = U(p,q)$ and $Sp(2n,\mathbb{R})$, the classification of all such $\pi_u$ along with their Dirac cohomologies are given in \cite{BP15}.

From now on, we focus on $G = Sp(2n,\mathbb{R})$ and $SO^*(2n)$.  We begin by computing $\mathrm{DI}(\pi_u)$ for all unipotent representations $\pi_u$ of $G$ whose infinitesimal characters satisfy Theorem \ref{thm-HP}.
From there, one can use the results in \cite{DW4} to compute $\mathrm{DI}(\mathcal{L}_S(Z))$
in Theorem \ref{thm-Vog84} in the special case when $Z$ consists of trivial characters
for each Type $A$ factor, and $\pi_u$ for the $G_m'$ factor (Theorem \ref{thm-DH}).

As for the general case of weakly fair $\mathcal{L}_S(Z')$ such that $Z'$ consists of some Type $A$ characters and the same $\pi_u$ for the $G_m'$ factor, then it can be placed inside a \emph{coherent family} containing the representation $\mathcal{L}_S(Z)$ given in the previous paragraph (c.f. \cite[Theorem 7.2.23]{Vog81}). Consequently, one can get $\mathrm{DI}(\mathcal{L}_S(Z'))$ from $\mathrm{DI}(\mathcal{L}_S(Z))$ by the translation principle (Proposition \ref{prop-translation}). The explicit formulas for $\mathrm{DI}(\mathcal{L}_S(Z'))$ are given in Corollary \ref{cor-mainsp} and Theorem \ref{thm-mainso}.

\section{Dirac index of unipotent representations of $Sp(2n, \bbR)$} \label{sec-sp2nr}

Let $G$ be $Sp(2n, \bbR)$. We fix a Vogan diagram for $\frs\frp(2n, \bbR)$ as in Fig.~\ref{Fig-Sp2n-Vogan}, where the simple roots are $e_1-e_2$, $e_2-e_3$, $\dots$, $e_{n-2}-e_{n-1}$, $e_{n-1}-e_n$ and $2e_n$ when enumerated  from left to right, with $2e_n$ being dotted. Therefore, $\zeta=(\frac{1}{2}, \dots, \frac{1}{2})$.

\begin{figure}[H]
\centering
\scalebox{0.80}{\includegraphics{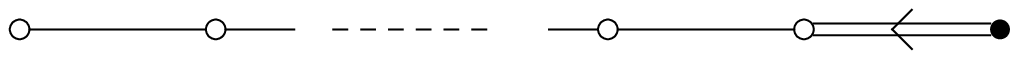}}
\caption{The Vogan diagram for $\frs\frp(2n, \bbR)$}
\label{Fig-Sp2n-Vogan}
\end{figure}

By fixing this Vogan diagram, we have actually fixed
$$
\Delta^+(\frk, \frt)=\{e_i-e_j \mid 1\leq i<j \leq n\},
$$
and chosen
$$
\Delta^+(\frp, \frt)=\{e_i+e_j \mid 1\leq i<j \leq n\}\cup \{2e_i\mid 1\leq i\leq n\}.
$$
Then $\rho_c=(\frac{n-1}{2}, \frac{n-3}{2}, \dots, -\frac{n-1}{2})$, $\rho_n=(\frac{n+1}{2},  \dots, \frac{n+1}{2})$, and $\rho=(n, n-1, \dots, 1)$.

\medskip
In \cite{BP15}, Barbasch and Pand\v zi\'c gave a construction of all
unipotent representations $\pi_u$ of $Sp(2n, \bbR)$ whose infinitesimal characters satisfy
Theorem \ref{thm-HP}.
Therefore, it includes all possible unipotent representations of $G$
with non-zero Dirac cohomology. In the next couple of subsections, we will study
the Dirac index of these representations.

\subsection{Dirac index of $\pi_u = X(r,s;\epsilon,\eta)$}\label{sec-DI-Xpqepsiloneta}

Let $k$ be a positive integer such that $r+s=2k\leq n$, where $r$ and $s$ are non-negative integers. Let $\epsilon, \eta$ be $0$ or $1$. Set $\epsilon=0$ if $r=0$, and set $\eta=0$ if $s=0$. The irreducible unipotent representation $X(r,s; \epsilon, \eta)$ is constructed from the character $\bbC_{\epsilon, \eta}$ of $O(r, s)$ via theta lifting. Its $K$-types are as follows:
\begin{equation} \label{sp2nktypes}
(\frac{r-s}{2}, \dots, \frac{r-s}{2})+(2a_1+\epsilon, \dots, 2a_r+\epsilon, 0, \dots, 0, -2b_s-\eta, \dots, -2b_1-\eta),
\end{equation}
where $a_1\geq \cdots \geq a_r\geq 0$ and $b_1\geq \cdots \geq b_s\geq 0$ are integers.
The infinitesimal character of $X(r, s; \epsilon, \eta)$ is
$$
\Lambda=\Lambda_k=(n-k, n-k-1, \dots, k+1, k, k-1, \dots, -k+1).
$$
As in \cite{BP15}, we call the last $2k$ coordinates $k, k-1, \dots, -k+1$ the \emph{core} of $\Lambda$, and call the first $n-2k$ coordinates $n-k, n-k-1, \dots, k+1$ the \emph{tail} of $\Lambda$. For $w\in W(\frg, \frt)$, $w\Lambda -\rho_c$ is the highest weight of a $\widetilde{K}$-type if and only if the entries of $w \Lambda$ are strictly decreasing. In such a case, we must have that either
\begin{equation}\label{special}
w\Lambda=(i_1, \dots, i_u, k, k-1, \dots, -k+1, -j_v, \dots, -j_1),
\end{equation}
or
\begin{equation}\label{non-special}
w\Lambda=(i_1, \dots, i_u,  k-1, \dots, -k+1, -k, -j_v, \dots, -j_1).
\end{equation}
Here $u$ and $v$ are two non-negative integers such that $u+v=n-2k$. Moreover, $i_1> \cdots> i_u$, $j_1> \cdots> j_v$ are such that
$$
\{i_1, \dots, i_u, j_1, \dots, j_v \}
=\{k+1, k+2, \dots, n-k\}.
$$
Let $\tau:=w\Lambda -\rho_c$ be a $\widetilde{K}$-type. We call $\tau$ \emph{special} if \eqref{special} holds, and call $\tau$ \emph{non-special} if \eqref{non-special} holds. Let $w_0^K$ be the longest element of $W(\frk, \frt)$, which is isomorphic to $S_n$. Then one sees that $\tau$ is special if and only if $-w_0^K\tau$ is non-special.

Let $\tau:=w\Lambda -\rho_c$ be a $\widetilde{K}$-type with $w\Lambda$ given by \eqref{special}. In particular, $\tau$ is special.  Let $i_t^\prime=i_t - k$ and $j_t^\prime=j_t -k$. Then
$$
\{i'_1, \dots, i'_u, j'_1, \dots, j'_v \}
=\{1, 2, \dots, n-2k\}.
$$
Put
\begin{equation}\label{M-tau-special}
M(\tau):=j_1^\prime + \cdots + j_v^\prime + r(\epsilon + v) +\frac{r(r-1)}{2},
\end{equation}
and
\begin{equation}\label{N-tau-special}
N(\tau):=j_1^\prime + \cdots + j_v^\prime + s(\eta + v) +\frac{s(s-1)}{2}+\frac{n(n+1)}{2}.
\end{equation}

When $\tau$ is special, it follows from the proof of \cite[Theorem 3.7]{BP15} that the sign of the $\widetilde{K}$-type $E_{\tau}$ in $H_D(X(r, s; \epsilon, \eta))$ (if it does occur) is $(-1)^{M(\tau)}$. Indeed, in the setting of \cite{BP15}, we look at all possibilities of $\sigma \in W(\mathfrak{g},\mathfrak{t})^1$ such that
\begin{equation}\label{BP-sigma}
\sigma \rho=(x_1, \dots, x_s, i_1^\prime, \dots, i_u^\prime, -j_v^\prime, \dots, -j_1^\prime, -y_r, \dots, -y_1),
\end{equation}
where
\begin{align*}
&x_1=\eta+u+2\widetilde{b_1}+s,\quad \dots,\quad x_s=\eta+u+2\widetilde{b_s}+1;\\
&y_1=\epsilon+v+2\widetilde{a_1}+r-1,\quad \dots,\quad   y_r=\epsilon+v+2\widetilde{a_r}.
\end{align*}
for some integers $\widetilde{a_1} \geq \dots \geq \widetilde{a_r} \geq 0$, $\widetilde{b_1} \geq \dots \geq \widetilde{b_s} \geq 0$. For each possibility of $\sigma$ satisfying \eqref{BP-sigma}, the PRV component \cite{PRV} of
$$
E_{(\frac{r-s}{2}, \dots, \frac{r-s}{2})+(2\widetilde{a_1}+\epsilon, \dots, 2\widetilde{a_r}+\epsilon, 0, \dots, 0, -2\widetilde{b_s}-\eta, \dots, -2\widetilde{b_1}-\eta)} \otimes E_{\sigma \rho - \rho_c}
$$
appearing in $X(r,s;\epsilon,\eta)$ (c.f. \eqref{sp2nktypes}) and $\mathrm{Spin}_G^{\pm}$ (c.f. \eqref{spin-Weyl}) respectively contributes a copy of $E_{\tau}$ in $H_D(X(r,s;\epsilon,\eta))$.
In order to determine the sign of $E_{\tau}$ in $\mathrm{DI}(X(r,s;\epsilon,\eta))$,
it suffices to study the parity of $l(\sigma)$ by Lemma \ref{lemma-even-odd-spinG}.

Note that for all $\sigma$ satisfying \eqref{BP-sigma}, $\rho - \sigma \rho = \langle \Phi_{\sigma} \rangle$ is a sum of roots in $\Delta^+(\mathfrak{p},\mathfrak{t})$ by Lemma \ref{lemma-Phiw}. Therefore, by \eqref{zeta-k-p} and \eqref{spin-w}, we have
$$l(\sigma) \equiv |\Phi_{\sigma}| \equiv \langle \rho-\sigma\rho, \zeta \rangle \quad ({\rm mod}\ 2).$$
On the other hand, a direct calculation of $\rho - \sigma \rho$ using \eqref{BP-sigma} gives
$$
\langle \rho-\sigma\rho, \zeta \rangle=j_1^\prime+\cdots+j_v^\prime+y_1+\cdots+y_r\equiv M(\tau) \quad ({\rm mod}\ 2).
$$
Therefore, Lemma \ref{lemma-even-odd-spinG} implies that \emph{each} $E_{\tau}$ in $H_D(X(r,s; \epsilon, \eta))$ has the \emph{same} sign $(-1)^{l(\sigma)}=(-1)^{M(\tau)}$.

%\medskip
%Now assume that $\tau$ is non-special (so that $-w_0^K\tau$ is special). By Lemma 3.2 of \cite{BP15}, \emph{each} $E_{\tau}$ in $H_D(X(r,s; \epsilon, \eta))$ has the \emph{same} sign, which equals to $(-1)^{\frac{n(n+1)}{2}}$ times the sign of (the special) $E_{-w_0^K\tau}$ in $H_D(X(q, p, \eta, \epsilon))$. This sign turns out to be $(-1)^{N(-w_0^K\tau)}$.

Still assume that $\tau$ is special. By Lemma 3.2 of \cite{BP15}, \emph{each} $E_{-w_0^K\tau}$ in $H_D(X(r,s; \epsilon, \eta))$ has the \emph{same} sign, which equals to $(-1)^{\frac{n(n+1)}{2}}$ times the sign of $E_{\tau}$ in $H_D(X(s,r; \eta, \epsilon))$. This sign turns out to be $(-1)^{N(\tau)}$.

\medskip
To sum up, when passing from $H_D(X(r,s; \epsilon, \eta))$ to ${\rm DI}(X(r,s; \epsilon, \eta))$, no cancellation happens. For each special $\widetilde{K}$-type $E_{\tau}$, the sign of $E_{\tau}$ (resp., $E_{-w_0^K\tau}$) in ${\rm DI}(X(r,s; \epsilon, \eta))$ is $(-1)^{M(\tau)}$ (resp., $(-1)^{N(\tau)}$). The multiplicity of $E_{\tau}$, which can be zero, is known from Proposition 3.4 and Theorem 3.7 of \cite{BP15}.

\begin{example} \label{eg-sp12}
Let us consider $Sp(12, \bbR)$ and take $k=2$. Then $\Lambda=(4,3,2,1,0,-1)$, $r+s=4$ and $u+v=2$. We list all the four special $\widetilde{K}$-types as follows:
\begin{itemize}
\item[$\bullet$] $\tau_1=(4, 3, 2, 1, 0, -1)- \rho_c=(\frac{3}{2}, \frac{3}{2}, \frac{3}{2}, \frac{3}{2}, \frac{3}{2}, \frac{3}{2})$, $u=2$, $v=0$.
\item[$\bullet$] $\tau_2=(3, 2, 1, 0, -1, -4)- \rho_c=(\frac{1}{2}, \frac{1}{2}, \frac{1}{2}, \frac{1}{2}, \frac{1}{2}, -\frac{3}{2})$, $u=1$, $v=1$, $j_1'=2$.
\item[$\bullet$] $\tau_3=(4, 2, 1, 0, -1, -3)- \rho_c=(\frac{3}{2}, \frac{1}{2}, \frac{1}{2}, \frac{1}{2}, \frac{1}{2}, -\frac{1}{2})$, $u=1$, $v=1$, $j_1'=1$.
\item[$\bullet$] $\tau_4=(2, 1, 0, -1, -3, -4)- \rho_c=(-\frac{1}{2}, -\frac{1}{2}, -\frac{1}{2}, -\frac{1}{2}, -\frac{3}{2}, -\frac{3}{2})$, $u=0$, $v=2$, $j_1'=2$, $j_2'=1$.
\end{itemize}
Now the sign of each $E_{\tau_i}$ and $E_{-w_0^K\tau_i}$ in $H_D(X(r,s; \epsilon, \eta))$ is given in Table \ref{table-sign-X-Sp12}. For instance, it reads that the sign of $E_{\tau_1}$ in $H_D(X(3, 1; \epsilon, \eta))$ (if it does occur) is $(-1)^{\epsilon+1}$.
\end{example}

\begin{table}
\centering
\caption{Signs of the $\widetilde{K}$-types in $H_D(X(r,s; \epsilon, \eta))$ of $Sp(12, \bbR)$}
\begin{tabular}{l|c|c|c|c||c|c|c|c|}
 &   $\tau_1$ &$\tau_2$  & $\tau_3$ & $\tau_4$ & $-w_0^K\tau_1$ & $-w_0^K\tau_2$ & $-w_0^K\tau_3$ & $-w_0^K\tau_4$\\
\hline
$r=4, s=0$ & $0$ & $0$ & $1$ & $1$ & $1$ & $1$ & $0$ & $0$\\
\hline
$r=3, s=1$ & $\epsilon+1$ & $\epsilon$ & $\epsilon+1$ & $\epsilon$ & $\eta+1$ & $\eta$ & $\eta+1$ & $\eta$ \\
\hline
$r=2, s=2$ & $1$ & $1$ & $0$ & $0$ & $0$ & $0$ & $1$ & $1$ \\
\hline
$r=1, s=3$ & $\epsilon$ & $\epsilon+1$ & $\epsilon$ & $\epsilon+1$ & $\eta$ & $\eta+1$ & $\eta$ & $\eta+1$ \\
\hline
$r=0, s=4$ & $0$ & $0$ & $1$ & $1$ & $1$ & $1$ & $0$ & $0$
\end{tabular}
\label{table-sign-X-Sp12}
\end{table}

\begin{thm}\label{thm-X-DI-sum}
Each of the following virtual $(\frg, K)$ module has zero Dirac index:
\begin{itemize}
\item[a)] $\displaystyle \sum_{\epsilon + \eta \equiv \delta \ (\mathrm{mod}\ 2)} X(2j-1, 2k+1-2j; \epsilon, \eta)$, for any $1\leq j\leq k-1$ and $\delta \in \{0,1\}$;
\item[b)] $\displaystyle \sum_{\epsilon} X(2k, 0; \epsilon, 0)+ \sum_{\eta} X(0, 2k; 0, \eta)+\sum_{j=1}^{k-1}\sum_{\epsilon + \eta \equiv \delta \ (\mathrm{mod}\ 2)} X(2j, 2k-2j; \epsilon, \eta)$ for $\delta \in \{0,1\}$.
\end{itemize}
In each case above, both $\epsilon$ and $\eta$ run over $\{0, 1\}$.
\end{thm}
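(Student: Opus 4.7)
The plan is to reduce each identity to a vanishing statement for the signed multiplicity of every fixed $\widetilde K$-type in the sum. By Section~\ref{sec-DI-Xpqepsiloneta}, the $\widetilde K$-types appearing in $H_D(X(r,s;\epsilon,\eta))$ are of the form $E_\tau$ or $E_{-w_0^K\tau}$ for some special $\tau=w\Lambda_k-\rho_c$, contributing signs $(-1)^{M(\tau)}$ and $(-1)^{N(\tau)}$ respectively to $\mathrm{DI}$. Writing $m(r,s,\epsilon,\eta,\tau)$ for the multiplicity, which by Proposition~3.4 and Theorem~3.7 of \cite{BP15} counts the parameters $(\widetilde a_1,\dots,\widetilde a_r,\widetilde b_1,\dots,\widetilde b_s)$ giving an element $\sigma$ as in \eqref{BP-sigma}, the theorem becomes a signed multiplicity identity for each $\tau$, to be handled by different mechanisms in (a) and (b).

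For part~(a), with $r=2j-1$ and $s=2k+1-2j$ both odd, the factor $(-1)^{r\epsilon}=(-1)^\epsilon$ makes $(-1)^{M(\tau)}$ depend on $\epsilon$ alone, and symmetrically $(-1)^{N(\tau)}$ depends on $\eta$ alone. The involution $(\epsilon,\eta)\mapsto(1-\epsilon,1-\eta)$ preserves the parity $\epsilon+\eta\pmod 2$ and flips the sign of both special and non-special $\widetilde K$-type contributions. What remains is the symmetry $m(r,s,\epsilon,\eta,\tau)=m(r,s,1-\epsilon,1-\eta,\tau)$, which I plan to establish by an explicit bijection on the BP15 parameter sets; the shift of $K$-types in \eqref{sp2nktypes} under $(\epsilon,\eta)\mapsto(1-\epsilon,1-\eta)$ is absorbed into a reindexing of the $\widetilde a_t,\widetilde b_t$ (for odd $r,s$ there is no parity obstruction), and \eqref{BP-sigma} is invariant under this reindexing. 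Paired summands then cancel.

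For part~(b), $r,s$ are both even and the signs $(-1)^{M(\tau)},(-1)^{N(\tau)}$ are independent of $(\epsilon,\eta)$; cancellation instead comes from the variation of $r=2j$ through the factor $(-1)^{r(r-1)/2}=(-1)^{j(2j-1)}\equiv(-1)^{j}\pmod 2$. The plan is to prove, for each special $\tau$ and each $\delta\in\{0,1\}$, the identity
\[
\sum_{\eta}m(0,2k,0,\eta,\tau)\;+\;(-1)^{k}\sum_{\epsilon}m(2k,0,\epsilon,0,\tau)\;+\;\sum_{j=1}^{k-1}(-1)^{j}\!\!\sum_{\epsilon+\eta\equiv\delta}\!\! m(2j,2k-2j,\epsilon,\eta,\tau)\;=\;0,
\]
together with the symmetric identity for the non-special contributions. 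I expect this to come from a uniform parameterisation: concatenating the positive block $(2\widetilde a_1+\epsilon,\dots,2\widetilde a_r+\epsilon)$ with the negative block $(-2\widetilde b_s-\eta,\dots,-2\widetilde b_1-\eta)$ produces a single strictly decreasing sequence of length $2k$, whose partition point between positive and negative halves tracks $r=2j$; the alternating sum in $j$ should then telescope once the $(\epsilon,\eta)$-sum is built in.

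The main obstacle is the identity in part~(b), which does not reduce to a single involution and intertwines the varying block size $r=2j$ with the restriction $\epsilon+\eta\equiv\delta$. I plan to first test $k=2,3$ against Table~\ref{table-sign-X-Sp12} and its analogues to pin down the explicit bijection, and then verify it uniformly using the parameter description of \cite[Prop.~3.4, Thm.~3.7]{BP15}.
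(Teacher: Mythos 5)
Your reduction to a fixed-$\widetilde K$-type signed multiplicity identity, and your identification of the key parity mechanism (for odd $r$ the sign $(-1)^{M(\tau)}$ carries a factor $(-1)^\epsilon$; for even $r$ it depends only on $j$), both match the paper exactly. But the proposal stops short of a proof at two points, and both are resolved in the paper by invoking the explicit multiplicity formulas of \cite{BP15} rather than re-deriving them combinatorially.

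For part (a), you need $m(r,s,\epsilon,\eta,\tau)=m(r,s,1-\epsilon,1-\eta,\tau)$ for $r=2j-1$ odd, and you assert this via a ``reindexing of the $\widetilde a_t,\widetilde b_t$'' with ``no parity obstruction.'' This is not a simple reindexing: flipping $\epsilon$ shifts every $y_i=\epsilon+v+2\widetilde a_i+(r-i)$ by $1$, so the parities of all the $y$'s flip, and the admissible sets of $(\widetilde a_i)$ filling out $\sigma\rho$ in \eqref{BP-sigma} are genuinely different for $\epsilon=0$ and $\epsilon=1$. The invariance of the count is true, but it is a nontrivial fact; the paper gets it directly from Theorem 3.7(2I) of \cite{BP15}, which states the multiplicity is $\binom{k-1}{j-1}$ whenever nonzero, independent of $(\epsilon,\eta)$. (The paper also first disposes of the case $\delta\equiv n+1\pmod 2$ via Theorem 3.7(2II), where every summand already has zero Dirac cohomology; your involution argument would subsume this, so that omission is harmless.)

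For part (b), you explicitly flag that you have not found the mechanism of cancellation, and your proposed ``telescoping'' does not match what actually happens. The paper's argument is a binomial identity, not a telescope: Proposition 3.4 of \cite{BP15} gives the $r=2k,s=0$ block (summed over $\epsilon$) coefficient $(-1)^{j_1'+\cdots+j_v'+k}$ and the $r=0,s=2k$ block (summed over $\eta$) coefficient $(-1)^{j_1'+\cdots+j_v'}$; Theorem 3.7(1) gives the $r=2j,s=2k-2j$ block summed over $\epsilon+\eta\equiv\delta$ coefficient $(-1)^{j_1'+\cdots+j_v'+j}\binom{k}{j}$. Pulling out the common sign, the total is $(-1)^{j_1'+\cdots+j_v'}\sum_{j=0}^{k}(-1)^j\binom{k}{j}=(-1)^{j_1'+\cdots+j_v'}(1-1)^k=0$. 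Without importing those multiplicity values from \cite{BP15}, you would have to re-prove them from the $\sigma$-parameter count, which your sketch does not do. So the gap is real: the binding ingredient in both parts is the explicit $\binom{k-1}{j-1}$, $1$, $\binom{k}{j}$ multiplicities from \cite{BP15}, and the proposal replaces them with unverified combinatorics.
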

\begin{proof}
Fix any special $\widetilde{K}$-type $E_{\tau}:=w\Lambda -\rho_c$ with $w\Lambda$ given by \eqref{special}. We will show that $E_{\tau}$ has coefficient zero in the Dirac index of each of the above virtual $(\frg, K)$ module. One can draw the same conclusion for each non-special $E_{\tau}$. Thus the desired conclusion follows.

(a) By Theorem 3.7(2II) of \cite{BP15}, when $\delta\equiv n+1 \ ({\rm mod}\ 2)$, each summand has zero Dirac cohomology. Thus the conclusion is trivial. Now assume that $\delta\equiv n\ ({\rm mod}\ 2)$. By Theorem 3.7(2I) of \cite{BP15}, the coefficient of $E_{\tau}$ in ${\rm DI}(X(2j-1, 2k+1-2j; \epsilon, \eta))$ is
$$
(-1)^{M(\tau)}{k-1 \choose j-1}=(-1)^{j_1^\prime + \cdots + j_v^\prime  + v  +j-1+ \epsilon}{k-1 \choose j-1}.
$$
The total sum is zero when $\epsilon$ and $\eta$ run over $\{0, 1\}$ such that $\epsilon + \eta \equiv \delta \ (\mathrm{mod}\ 2)$.

(b) By Proposition 3.4 of \cite{BP15}, the coefficient of $E_{\tau}$ in ${\rm DI}(X(2k, 0; 0, 0) + X(2k, 0; 1, 0))$ is
$$
(-1)^{j_1^\prime + \cdots + j_v^\prime  + k(2k-1)}=(-1)^{j_1^\prime + \cdots + j_v^\prime  + k}.
$$
Similarly,  the coefficient of $E_{\tau}$ in ${\rm DI}(X(0, 2k; 0, 0)+X(0, 2k; 0, 1))$ is
$$
(-1)^{j_1^\prime + \cdots + j_v^\prime}.
$$
Fix $1\leq j\leq k-1$, by Theorem 3.7(1) of \cite{BP15}, the coefficient of $E_{\tau}$ in
$$
{\rm DI}(\sum_{\epsilon + \eta \equiv \delta \ (\mathrm{mod}\ 2)} X(2j, 2k-2j; \epsilon, \eta))
$$
is equal to
$$
(-1)^{j_1^\prime + \cdots + j_v^\prime +j(2j-1)}
{k\choose j}
=(-1)^{j_1^\prime + \cdots + j_v^\prime +j}{k\choose j}.
$$
Note that ${k-1\choose j}+{k-1\choose j-1}={k\choose j}$.
Therefore, the total sum is
$$
(-1)^{j_1^\prime + \cdots + j_v^\prime}
\left[1 + \sum_{j=1}^{k-1}(-1)^{j}{k \choose j} +  (-1)^{k} \right]
=(-1)^{j_1^\prime + \cdots + j_v^\prime}(1-1)^k=0.
$$
\end{proof}

\begin{remark}
In \cite{BT}, Barbasch and Trapa studied the number of \emph{stable combinations} of special unipotent representations
whose annihilator is equal to the closure of a complex special nilpotent orbit $\mathcal{O}$.

For $\pi_u = X(r,s;\epsilon,\eta)$, $\mathcal{O}$ corresponds to the partition $[2^{2k}1^{2n-2k}]$,
and its \emph{Lusztig Spaltenstein dual} $\mathcal{O}^{\vee}$ corresponds to the partition $[2n-2k+1, 2k-1, 1]$
in $SO(2n+1,\mathbb{C})$.
Then the main theorem of \cite{BT} implies that there are
$3$ (resp., $4$) such stable combinations attached to $\mathcal{O}^{\vee}$ if $k = 1$ (resp., if $k > 1$). Note that this does not necessarily
account for all stable combinations: In general,
one needs to take \emph{all} nilpotent orbits in the same \emph{special piece} as $\mathcal{O}^{\vee}$ into account.

Since a necessary condition for a linear combination for some (non-trivial) special unipotent representations
to be stable is that its Dirac index is zero, we conjecture that the sum of the linear combinations in
Theorem \ref{thm-X-DI-sum}(a), and the linear combinations in Theorem \ref{thm-X-DI-sum}(b) are the
$3$ (if $k=1$) or $4$ (if $k > 1$) stable combinations specified in the above paragraph. In particular,
for $\mathcal{O} = [2^2]$ in $Sp(4,\mathbb{R})$, we obtain the $3$ stable combinations
attached to $\mathcal{O}^{\vee} = [3 \ 1^2]$ given in \cite[Example 2.2]{BT}.
\end{remark}

\subsection{Dirac index of $\pi_u = X^\prime(r,s;\epsilon,\eta)$}
Now consider $Sp(2n, \bbR)$ with $n$ odd. Let $r+s=2k=n+1$, where $r, s$ are non-negative integers. Then there is another family of unipotent representations $X^\prime(r,s; \epsilon, \eta)$ studied in \cite{BP15}. All of them have infinitesimal character
$$
\rho_c=(k-1, k-2, \dots, -k+1).
$$
Moreover, one has that
$$
X^\prime(2k, 0; 0, 0)\cong X^\prime (2k-1, 1; 1, 0), \quad
X^\prime(0, 2k; 0, 0)\cong X^\prime (1, 2k-1; 0, 1).
$$
Now we assume that both $p$ and $q$ are positive. Then $(\epsilon, \eta)$ can be $(0, 0)$, $(0, 1)$ or $(1, 0)$.
Adopting the setting of \cite[Theorem 3.8]{BP15}, let us determine the sign of the trivial $\widetilde{K}$-type $E_0$ in $H_D(X^\prime(r,s; \epsilon, \eta))$:
\begin{itemize}
\item[$\bullet$] $(\epsilon, \eta)=(1, 0)$. Then
$$
\sigma\rho=(2b_1+s-1, \dots, 2b_{s-1}+1, -2a_r-1, \dots, -2a_1-r)
.$$
Thus
    $$
    \langle \rho-\sigma\rho, \zeta\rangle=(2a_r+1)+ \cdots+ (2a_1+r).
    $$
    Therefore, the sign is $(-1)^{\frac{r(r+1)}{2}}$.

\item[$\bullet$] $(\epsilon, \eta)=(0, 0)$. Then
$$
\sigma\rho=(2b_1+s-1, \dots, 2b_{s-1}+1, -2c, -2a_{r-1}-1, \dots, -2a_1-r+1),
$$
where $c$ is either $a_r$ or $-b_s$. Thus
$$
\langle \rho-\sigma\rho, \zeta\rangle
\equiv (2a_{r-1}+1)+ \cdots+ (2a_1+r-1) \quad ({\rm mod}\ 2).
$$
Therefore, the sign is $(-1)^{\frac{r(r-1)}{2}}$.

\item[$\bullet$] $(\epsilon, \eta)=(0, 1)$. Then
$$
\sigma\rho=(2b_1+s, \dots, 2b_s+1, -2a_{r-1}-1, \dots, -2a_1-r+1).
$$
Thus
$$
\langle \rho-\sigma\rho, \zeta\rangle=(2a_{r-1}+1)+ \cdots+ (2a_1+r-1).
$$
Therefore, the sign is $(-1)^{\frac{r(r-1)}{2}}$.
\end{itemize}

As in Theorem \ref{thm-X-DI-sum}, we give a linear combination of $(\frg, K)$-modules that is conjectually a stable combination:
\begin{thm}\label{thm-Xprime-DI-sum}
Let $G=Sp(2n, \bbR)$ with $n$ odd. The following virtual $(\frg, K)$ module has zero Dirac index:
$$
X^\prime(0, 2k; 0, 0) + \sum_{j=1}^{k-1}X'(2j, 2k-2j; 0, 0) + X'(2k, 0; 0, 0).
$$
\end{thm}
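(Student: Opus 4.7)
The plan is to mimic the proof of Theorem \ref{thm-X-DI-sum}(b): fix an arbitrary $\widetilde{K}$-type $E_\tau$ (with $\tau = w\rho_c - \rho_c$ for some $w \in W(\frg,\frt)^1$) and show that its coefficient in the Dirac index of the displayed virtual $(\frg, K)$-module vanishes. Since $E_\tau$ is arbitrary, this proves the theorem.

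The first step is to generalize the sign computation carried out in the text just prior to the statement. Although only the trivial $\widetilde{K}$-type $E_0$ is treated there, the underlying identity $l(\sigma) \equiv \langle \rho - \sigma\rho, \zeta\rangle \pmod 2$ and Lemma \ref{lemma-even-odd-spinG} are uniform in $\tau$. For a general $\widetilde{K}$-type $E_\tau$ arising from a Barbasch--Pand\v zi\'c datum $\sigma$, the vector $\sigma \rho$ takes the same ``$2b_i + \ldots$, $-2a_j - \ldots$'' shape as in the displayed cases, with the positions of the entries governed by $w$. Collecting residues mod $2$, one obtains that for $(\epsilon,\eta) = (0,0)$ the sign of $E_\tau$ in $H_D(X^\prime(r,s;0,0))$ factors as $(-1)^{A(\tau)} \cdot (-1)^{r(r-1)/2}$, where $A(\tau)$ depends only on $w$. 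Just as in \cite[Theorem 3.7]{BP15} (and in the discussion of $X(r,s;\epsilon,\eta)$ in Section \ref{sec-DI-Xpqepsiloneta}), every copy of $E_\tau$ in $H_D(X^\prime(r,s;0,0))$ carries this common sign, so no cancellation occurs when passing from $H_D$ to $\mathrm{DI}$.

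The second step reads off the multiplicity $m_\tau(r,s)$ of $E_\tau$ in $H_D(X^\prime(r,s;0,0))$ from \cite[Theorem 3.8]{BP15}. The valid choices of BP datum $\sigma$ producing $E_\tau$ are indexed by assigning a fixed number of ``compulsory'' slots determined by $\tau$, and then distributing the remaining $r$ ``$a$''-slots among the $2k$ available coordinates, which yields a multiplicity of the form $C(\tau) \cdot \binom{k}{r/2}$ for some $j$-independent factor $C(\tau)$. Using the isomorphisms $X^\prime(2k,0;0,0) \cong X^\prime(2k-1,1;1,0)$ and $X^\prime(0,2k;0,0) \cong X^\prime(1,2k-1;0,1)$, the endpoint cases $j=0$ and $j=k$ are consistently covered by $\binom{k}{0} = \binom{k}{k} = 1$ (one can verify the signs $1$ and $(-1)^k$ match the $(\epsilon,\eta) = (0,1)$ and $(1,0)$ formulas from the text, respectively).

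Putting the pieces together for $r = 2j$, $s = 2k - 2j$, the coefficient of $E_\tau$ in the Dirac index of the sum in the theorem equals
$$
(-1)^{A(\tau)} \, C(\tau) \sum_{j=0}^{k} (-1)^{j(2j-1)} \binom{k}{j}
= (-1)^{A(\tau)} \, C(\tau) \sum_{j=0}^{k} (-1)^{j} \binom{k}{j}
= (-1)^{A(\tau)} \, C(\tau) \cdot (1-1)^k = 0,
$$
exactly as in the $\sum (-1)^j \binom{k}{j} = 0$ endgame of Theorem \ref{thm-X-DI-sum}(b). The main obstacle will be the first step: making rigorous the extrapolation from the $E_0$-calculation displayed in the text to an arbitrary $E_\tau$, and in particular proving the clean factorization ``sign $\times$ multiplicity $=$ $[\tau$-dependent factor$] \cdot (-1)^j \binom{k}{j}$''. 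This amounts to a careful re-run of the BP bookkeeping for $\sigma \in W(\frg,\frt)^1$ producing $E_\tau$, keeping track of both signs and multiplicities as $r$ varies in steps of $2$ while $(\epsilon, \eta) = (0,0)$ is held fixed.
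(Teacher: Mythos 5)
The gap you flag in your first step does not actually need to be closed: it is vacuous, and noticing this is exactly what makes the paper's proof short.

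The key observation you have missed is that $X'(r,s;\epsilon,\eta)$ has infinitesimal character precisely $\rho_c=(k-1,k-2,\dots,-(k-1))$. By Theorem~\ref{thm-HP}, any $\widetilde K$-type $E_\gamma$ in $H_D(X'(r,s;\epsilon,\eta))$ must have $\gamma+\rho_c=w\rho_c$ for some $w\in W(\frg,\frt)$. For $\gamma$ to be $\Delta^+(\frk,\frt)$-dominant, $w\rho_c$ must be strictly decreasing, and since a signed permutation of the symmetric multiset $\{k-1,\dots,1,0,-1,\dots,-(k-1)\}$ produces $2k-1$ distinct entries only when those entries are exactly $\{k-1,\dots,-(k-1)\}$ again, the unique strictly decreasing arrangement is $\rho_c$ itself. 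Hence $\gamma=0$: the trivial $\widetilde K$-type $E_0$ is the \emph{only} candidate, and your entire first step (extending the sign and multiplicity bookkeeping to arbitrary $E_\tau$) is handling an empty set of cases.

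Once you restrict to $E_0$, your remaining computation coincides with the paper's proof: the signs $(-1)^{r(r-1)/2}$ for $(\epsilon,\eta)=(0,0)$ come from the displayed calculation just before the theorem, the multiplicities $\binom{k}{j}$ for $1\le j\le k-1$ (and $\binom{k-1}{0}=\binom{k}{0}$, $\binom{k-1}{k-1}=\binom{k}{k}$ for the endpoints via the identifications $X'(0,2k;0,0)\cong X'(1,2k-1;0,1)$ and $X'(2k,0;0,0)\cong X'(2k-1,1;1,0)$) come from Theorem~3.8 of \cite{BP15}, and the alternating sum collapses to $(1-1)^k=0$. So your plan is correct and matches the paper's, but the ``main obstacle'' you identify disappears once you use the rigidity of the infinitesimal character; as written, the proposal is incomplete because it leaves that (unnecessary) general-$\tau$ step unverified.
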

\begin{proof}
Let us figure out the coefficient of $E_0$ in the Dirac index of each $X'(2j, 2k-2j; 0, 0)$ using Theorem 3.8 of \cite{BP15}. Indeed,

\begin{itemize}

\item[$\bullet$] For $X^\prime(0, 2k; 0, 0)\cong X^\prime (1, 2k-1; 0, 1)$, the coefficient is  $(-1)^0{k-1 \choose 0}=(-1)^0{k \choose 0}$.

\item[$\bullet$] For $X'(2j, 2k-2j; 0, 0)$, $1\leq j \leq k-1$, the coefficient is $(-1)^{\frac{2j(2j-1)}{2}}{k \choose j}=(-1)^j{k \choose j}$.

\item[$\bullet$] For $X^\prime(2k, 0; 0, 0)\cong X^\prime (2k-1, 1; 1, 0)$, the coefficient is $(-1)^{\frac{2k(2k-1)}{2}}{k-1 \choose k-1}=(-1)^k{k \choose k}$.
\end{itemize}
Therefore, the total sum is
$$
(-1)^0{k \choose 0}+\sum_{j=1}^{k-1}(-1)^j{k \choose j}+(-1)^k{k \choose k}
=(1-1)^k=0.
$$
\end{proof}

\section{Dirac index of unipotent representations of $SO^*(2n)$} \label{sec-sostar2n}
Let $G$ be $SO^*(2n)$.
We fix a Vogan diagram for $\frs\fro^*(2n)$ as in Fig.~\ref{Fig-SOstar2n-Vogan}, where the simple roots are $e_1-e_2$, $e_2-e_3$, $\dots$, $e_{n-3}-e_{n-2}$, $e_{n-2}-e_{n-1}$, $e_{n-1}-e_n$ and $e_{n-1} + e_n$ when enumerated  from left to right, with $e_{n-1}+e_n$ being dotted. Therefore, $\zeta=(\frac{1}{2}, \dots, \frac{1}{2})$.

\begin{figure}[H]
\centering
\scalebox{0.80}{\includegraphics{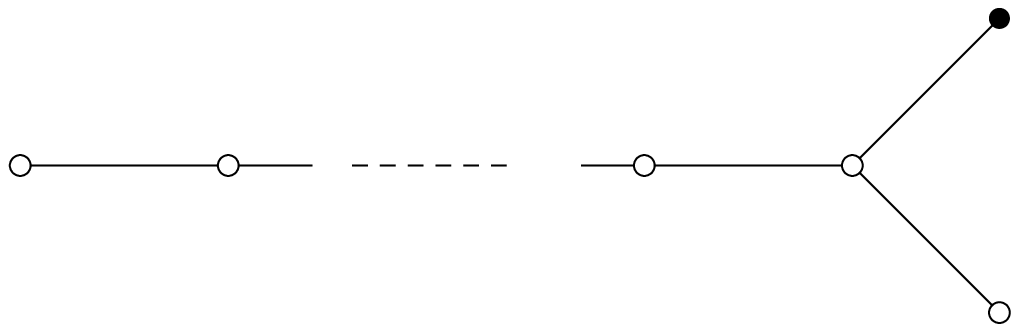}}
\caption{The Vogan diagram for $\frs\fro^*(2n, \bbR)$}
\label{Fig-SOstar2n-Vogan}
\end{figure}

By fixing this Vogan diagram, we have actually fixed
$$
\Delta^+(\frk, \frt)=\{e_i-e_j \mid 1\leq i<j \leq n\},
$$
and chosen
$$
\Delta^+(\frp, \frt)=\{e_i+e_j \mid 1\leq i<j \leq n\}.
$$

\subsection{Unipotent representations of $SO^*(2n)$}

This subsection aims to classify
all the unipotent representations of $SO^*(2n)$ whose infinitesimal characters satisfy Theorem \ref{thm-HP}. Similar to \cite{BP15}, our basic tool is theta correspondence \cite{Ho1,Ho2,Ho3}.

\begin{thm}
Let $G = SO^*(2n)$. Then the unipotent representations of $G$ whose infinitesimal characters satisfy Theorem \ref{thm-HP}
are obtained from theta lifts of the trivial representation in $Sp(r,s)$ with $2(r + s) \leq n-1$.
\end{thm}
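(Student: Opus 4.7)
The plan is to imitate the approach of Barbasch--Pand\v zi\'c \cite{BP15} for $Sp(2n,\bbR)$, substituting the orthogonal-symplectic dual pair by the quaternionic dual pair $(SO^*(2n), Sp(r,s))$.

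I would first translate the hypothesis of Theorem \ref{thm-HP} into a combinatorial constraint on the infinitesimal character $\Lambda$ of a unipotent representation $\pi_u$: namely that $\Lambda$ must be $W(\frg,\frt)$-conjugate to $\gamma+\rho_c$ for some $\widetilde K$-type weight $\gamma$. Using $\rho_c=(\tfrac{n-1}{2},\tfrac{n-3}{2},\dots,-\tfrac{n-1}{2})$ and the fact that $\widetilde K$-type highest weights are weakly decreasing $n$-tuples of half-integers (with the appropriate integrality), I would show that, modulo the signed-permutation action of $W(D_n)$ (an even number of sign changes), $\Lambda$ must take a specific ``tail plus core'' form analogous to the sequences $\Lambda_k$ appearing in Section \ref{sec-DI-Xpqepsiloneta}.

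Next, I would use the parametrization of unipotent representations of $SO^*(2n)$ by complex nilpotent orbits, reading off the infinitesimal character from a Jacobson--Morozov triple for the Lusztig--Spaltenstein dual orbit. Comparing with step one singles out precisely the orbits whose partitions have the two-column shape $[2^{2(r+s)}\,1^{2n-4(r+s)}]$, with the appropriate $D_n$ parity condition. This is where the sharp bound $2(r+s)\leq n-1$ (rather than the naive $\leq n$) emerges: it is forced by the $W(D_n)$-integrality and the requirement that the ``tail'' be nonempty, rather than by unitarity as such.

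Finally, I would realize these representations geometrically. For the dual pair $(SO^*(2n), Sp(r,s))$, Howe duality \cite{Ho1,Ho2,Ho3} together with the stable range condition $2(r+s)\leq n-1$ guarantees that the theta lift $\theta(\mathbf{1}_{Sp(r,s)})$ is nonzero, irreducible, and unitary. A direct computation of its infinitesimal character -- carried out either by the usual formula for stable-range lifts of the trivial representation or via a seesaw with a compact dual pair, in the spirit of \cite{BP15} -- then matches the list produced in step two, and injectivity of the stable-range lift on irreducibles completes the bijection. The main obstacle will be the combinatorics in step two: pinning down the sharp bound $2(r+s)\leq n-1$ and handling the $W(D_n)$-specific sign ambiguity, which is the principal source of divergence from the $Sp(2n,\bbR)$ picture in Section \ref{sec-sp2nr}.
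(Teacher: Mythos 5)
Your outline is recognizably the same strategy the paper follows (mimicking \cite{BP15}: HP constraint $\Rightarrow$ integral, hence special unipotent infinitesimal character $\Lambda_k'$; match with two-column orbits in $\mathfrak{so}(2n,\mathbb{C})$; realize the representations as theta lifts of $\mathbf{1}_{Sp(r,s)}$). You are also right that the bound $2(r+s)\leq n-1$ is forced by the combinatorics of $\Lambda_k'$ (the tail $(n-k-1,\dots,k+1)$ must have nonnegative length), not by stable-range unitarity.

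However, there is a genuine gap in your final step. You conclude with ``injectivity of the stable-range lift on irreducibles completes the bijection.'' Injectivity only gives that the $k+1$ theta lifts $\theta(\mathbf{1}_{Sp(r,s)})$, $r+s=k$, are pairwise distinct; it does not tell you that they \emph{exhaust} the unipotent representations with infinitesimal character $\Lambda_k'$. There is no a priori reason why the set of unipotent representations attached to the orbit $\mathcal{O}_k$ should have cardinality exactly $k+1$ — in fact your phrase ``parametrization of unipotent representations of $SO^*(2n)$ by complex nilpotent orbits'' is misleading, since several representations can share the same annihilator variety. The paper closes this gap with an explicit count: by McGovern \cite{McG} there are precisely $k+1$ special unipotent representations with ${\rm AV}(Ann_{U(\mathfrak{g})}(\pi_u))=\overline{\mathcal{O}_k}$, and then the associated-variety computation of Loke--Ma \cite{LM} (the $K$-orbit $\mathcal{O}_{r,s}$ with signed partition $[(+-)^{2r}(-+)^{2s}+^{n-k}-^{n-k}]$) shows the $k+1$ theta lifts have distinct associated varieties, so they must be exactly those $k+1$ representations. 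Without a counting input of this sort, your argument establishes an inclusion but not the claimed exhaustion.

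A secondary remark: your step one, deriving the ``tail plus core'' shape of $\Lambda$ purely from the HP conjugacy condition $\Lambda\sim\gamma+\rho_c$ and monotonicity of $\widetilde K$-type highest weights, is under-determined on its own — the HP condition is satisfied by a large family of $\Lambda$'s, including all regular integral ones. The paper first uses the implication (integral infinitesimal character $+$ weakly unipotent $\Rightarrow$ special unipotent, hence $\Lambda=h^\vee/2$ for an even orbit) to reduce to a finite list, and only then imposes the $\Delta^+(\mathfrak{k},\mathfrak{t})$-dominance coming from HP to pin down $\Lambda=\Lambda_k'$. You should make that reduction explicit before invoking the combinatorics.
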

\begin{proof}
By the condition of the infinitesimal character $\Lambda$ of $\pi_u$ given by Theorem \ref{thm-HP},
the coordinates of $\Lambda$ must be integers. Therefore, the associated variety of $Ann_{U(\mathfrak{g})}(\pi_u)$ must be
a \emph{special} nilpotent orbit, and $\pi_u$ must be a \emph{special unipotent representation}.
Therefore, $\Lambda = h^{\vee}/2$, where $h^{\vee}$ is the semisimple element of a Jacobson-Morozov triple of a special
(more precisely, even) nilpotent orbit.

Putting in the extra condition that $w\Lambda$ is $\Delta^+(\mathfrak{k},\mathfrak{t})$-dominant, $w\Lambda$
can only be of the form
$$\Lambda_k' = (n-k-1, \dots, k, \dots, 1, 0, -1, \dots, -k)$$
for $0 \leq 2k < n$.

Suppose now the infinitesimal character of $\pi_u$ is equal to $\Lambda = \Lambda_k'$. Then $$
{\rm AV}(Ann_{U(\mathfrak{g})}(\pi_u)) = \overline{\mathcal{O}_k},
$$
where $\mathcal{O}_k$ is the nilpotent orbit in $\mathfrak{so}(2n,\mathbb{C})$ corresponding to the partition
$[2^{2k}1^{n-2k}]$. By \cite{McG}, the number of such special unipotent representations is equal to $k+1$.

For each $r+s = k$,  the theta lift of the trivial representation of $Sp(r,s)$
to $SO^*(2n)$ has infinitesimal character $\Lambda_k$ \cite{Prz}. Moreover, its associated variety is equal
to the closure of the $K$-nilpotent orbit $\mathcal{O}_{r,s}$ with partition $[(+-)^{2r} (-+)^{2s} +^{n-k} -^{n-k}]$.
See \cite{LM}. Therefore, the $k+1$ representations constructed in this way are distinct, and
satisfy ${\rm AV}(Ann_{U(\mathfrak{g})}(\pi_u)) = \overline{\mathcal{O}_k}$. This exhausts all
possibilities of unipotent representations.
\end{proof}
We denote the representation obtained by theta lift of the trivial representation
of $Sp(r,s)$ by $X(r,s)$. In the next subsection, we will study the Dirac cohomology and Dirac index
of $X(r,s)$. It is worth noting that for $n \leq 6$, all $X(r,s)$ can be realized as $\aq(\lambda)$-modules.
The first example of $X(r,s)$ not being an $\aq(\lambda)$-module is $X(1,1)$ in $SO^*(14)$.

\subsection{Dirac index of $X(r,s)$}
We will proceed as in \cite{BP15} and the previous section to find the Dirac index of $X(r,s)$
for $2k = 2(r+s) \leq n -1$. Firstly, recall that $X(r,s)$ has infinitesimal character
$\Lambda_k = (n-k-1, \dots, 1, 0, -1, \dots, -k)$ and the highest weights of its $K$-types are
\begin{equation}\label{KTypes-Xpq}
(r-s, \dots, r-s) + (a_1, a_1, \dots, a_r, a_r, \overbrace{0, \dots, 0}^{n-2k}, -b_s, -b_s, \dots, -b_1, -b_1)
\end{equation}
with $a_1 \geq \dots \geq a_r$ and $b_1 \geq \dots \geq b_s$ all being non-negative integers.
Note that these $K$-types appear in $X(r,s)$ with multiplicity one.

Now let us compute $H_D(X(r,s))$. Note that $\Lambda_k'$ is conjugate to
$$
\Lambda=(n-k-1, \dots, k+1, k, \dots, 1, 0, -1, \dots,  -k).
$$
By Theorem \ref{thm-HP}, any $\widetilde{K}$-type $E_{\tau}$ occurring in $H_D(X(r,s))$ must bear the form
$$
\tau=x\Lambda-\rho_c, \quad \mbox{ for some } x \in W(\frg, \frt)^1.
$$
Let $u, v$ be two non-negative integers such that $u+v=n-2k-1$.  Then we must have
\begin{equation}\label{xLambda-Xpq}
x\Lambda=(i_1, \dots, i_u, k, \dots, 1, 0, -1, \dots, -k, -j_v, \dots, -j_1),
\end{equation}
where $i_1> \cdots > i_u$, $j_1> \dots> j_v$ and
$$
\{i_1, \dots, i_u, j_1, \dots, j_v \}=\{k+1, \dots, n-k-1\}.
$$
We put $i_t'=i_t-k$ and $j_t'=j_t-k$. Then
$$
\{i'_1, \dots, i'_u, j'_1, \dots, j'_v \}=\{1, \dots, n-2k-1\}.
$$

Now it boils down to solve the equation
\begin{equation}\label{Dirac-SOstar-unipotent}
\sigma\rho -\rho_c + w_0^K \mu =w\tau, \quad w\in W(\frk, \frt), \ \sigma\in W(\frg, \frt)^1,
\end{equation}
where $\mu$ is a $K$-type of $X(r,s)$ as described in \eqref{KTypes-Xpq}. Similar to Section 3 of \cite{BP15}, we have
\begin{equation}\label{Xpq-sigmarho}
\begin{aligned}
\sigma\rho =&(b_1+u+2s, b_1+u+2s-1, \dots, b_s+u+2, b_s+u+1, i'_1, \dots, i'_u, 0, -j'_v, \\
&\dots, -j_1', -a_r-v-1, -a_r-v-2, \dots, -a_1-v-2r+1, -a_1-v-2r).
\end{aligned}
\end{equation}
Let us arrange  $n-2k, \dots, n-1$ into the following $k$ pairs of consecutive integers:
\begin{equation}\label{k-pair}
n-2k, n-2k+1 \mid \mid  n-2k+2, n-2k+3 \mid \mid \quad  \cdots  \quad \mid \mid  n-2, n-1.
\end{equation}

Fix $\tau=x\Lambda -\rho_c$ with $x\Lambda$ being given by \eqref{xLambda-Xpq}. Note that
$$
v+2r\leq  (n-2k-1)+2k  \leq n-1, \quad u+2s\leq (n-2k-1)+2k  \leq n-1.
$$
Thus for any choice of $r$ pairs of consecutive integers from \eqref{k-pair}, there is a unique solution to \eqref{Dirac-SOstar-unipotent} in terms of $a_1, \dots, a_r$. Therefore, the multiplicity of $E_{\tau}$ in $H_D(X(r,s))$ is ${k\choose r}$. Moreover, it follows from \eqref{Xpq-sigmarho} that
$$
\langle \rho-\sigma\rho, \zeta \rangle=j'_1+\cdots+j'_v +(a_r+v+1)+(a_r+v+2)+\cdots + (a_1+v+2r-1) + (a_1+v+2r).
$$
Thus the sign of $E_{\tau}$ in $H_D(X(r,s))$ is
\begin{equation}\label{sign-tau-Xpq}
(-1)^{j'_1+\cdots+j'_v+r}.
\end{equation}

The above discussion leads to the following.
\begin{thm}\label{thm-Dirac-Xpq}
Let $G$ be $SO^*(2n)$. Let $k$ be a positive integer such that $2k+1\leq n$. The Dirac cohomology of $X(r, k-r)$ is
$$
{k\choose r}\sum_{v=0}^{n-2k-1}\sum_{j_1'> \cdots> j'_v}E_{(i'_1, \dots, i'_u, k, \dots, 1, 0, -1, \dots, -k, -j'_v, \dots, -j'_1)-\rho_c},
$$
where $\{j_1', \dots,  j'_v\}$ run over the subsets of $\{1, 2, \dots, n-2k-1\}$ with cardinality $v$, and $\{i'_1, \dots, i'_u\}$ is the complementary set.
\end{thm}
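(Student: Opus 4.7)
The plan is to carry out the analysis sketched in the paragraphs immediately preceding the statement and then package the conclusions. First I would apply Theorem \ref{thm-HP} to $\pi = X(r,s)$: since $X(r,s)$ has infinitesimal character conjugate to $\Lambda = (n-k-1, \dots, k+1, k, \dots, 1, 0, -1, \dots, -k)$, any $\widetilde{K}$-type $E_{\tau}$ appearing in $H_D(X(r,s))$ must satisfy $\tau + \rho_c = x\Lambda$ for some $x \in W(\frg, \frt)$. Imposing $\Delta^+(\frk, \frt)$-dominance of $\tau$ forces $x\Lambda$ to be of the form \eqref{xLambda-Xpq}; that is, the central block $k, k-1, \dots, 0, -1, \dots, -k$ must remain fixed, and the outer blocks are assembled by splitting $\{k+1, \dots, n-k-1\}$ into a positive part $\{i_1 > \cdots > i_u\}$ and a negative part $\{j_1 > \cdots > j_v\}$.

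Next I would determine the multiplicity of each such $E_{\tau}$. Since $X(r,s)$ is unitary, $H_D(X(r,s)) = \ker D$ on $X(r,s) \otimes \mathrm{Spin}_G$, so this multiplicity is obtained by collecting PRV contributions from pairs $(E_{\mu}, E_{\sigma\rho - \rho_c})$ with $\mu$ a $K$-type of $X(r,s)$ (as in \eqref{KTypes-Xpq}) and $\sigma \in W(\frg, \frt)^1$, subject to the matching equation \eqref{Dirac-SOstar-unipotent}. Parallel to the $Sp(2n, \bbR)$ analysis in Section \ref{sec-sp2nr}, the shape of $\sigma\rho$ must be \eqref{Xpq-sigmarho}. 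Matching coordinates with $x\Lambda$ then shows that the $k$ indices $n-2k, \dots, n-1$ are forced to be split into the $k$ consecutive pairs listed in \eqref{k-pair}, and that $r$ of these pairs must be assigned to the $a$-block (the remaining $s = k-r$ pairs being automatically assigned to the $b$-block). Each pair-assignment yields a unique valid $(\mu, \sigma)$ and hence a single copy of $E_{\tau}$ in $H_D(X(r,s))$, giving multiplicity $\binom{k}{r}$.

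Summing the multiplicity $\binom{k}{r}$ over all admissible $\tau$, equivalently over $v \in \{0, 1, \dots, n-2k-1\}$ together with subsets $\{j_1' > \cdots > j_v'\} \subseteq \{1, \dots, n-2k-1\}$ and complement $\{i_1' > \cdots > i_u'\}$, then yields the stated formula. The main technical point I expect to verify carefully is that each of the $\binom{k}{r}$ pair-choices really does produce a $\sigma\rho$ with strictly decreasing entries, so that a valid $\sigma \in W(\frg, \frt)^1$ genuinely exists; this is precisely where the hypothesis $2k+1 \leq n$ (equivalently the bounds $v + 2r \leq n-1$ and $u + 2s \leq n-1$ recorded in the discussion) is needed. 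Once that is checked, the count is clean: no $K$-type collisions and, since $X(r,s)$ is unitary, no cancellations between $\ker D$ and $\im D$, so the multiplicity of each $E_{\tau}$ is exactly $\binom{k}{r}$ as asserted.
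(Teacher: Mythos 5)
Your proposal follows the same route as the paper's own argument: apply Theorem \ref{thm-HP} to pin down $\tau = x\Lambda - \rho_c$ with $x\Lambda$ of the form \eqref{xLambda-Xpq}, solve the PRV matching equation \eqref{Dirac-SOstar-unipotent} via the shape \eqref{Xpq-sigmarho} of $\sigma\rho$, observe that each choice of $r$ of the $k$ consecutive pairs in \eqref{k-pair} uniquely determines $(a_1,\dots,a_r)$ and hence a single contribution to $E_\tau$, and use the bound $v+2r \le n-1$, $u+2s \le n-1$ (from $2k+1 \le n$) together with the multiplicity-one $K$-type structure of $X(r,s)$ to conclude the multiplicity is $\binom{k}{r}$. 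This matches the paper's proof in substance and structure.
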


Using \eqref{sign-tau-Xpq}, one easily deduces the following.

\begin{cor}\label{cor-thm-Dirac-Xpq}
Let $k$ be a positive integer such that $2k+1\leq n$.
The virtual $(\frg, K)$ module $\sum_{r=0}^{k} X(r, k-r)$ of $SO^*(2n)$ has zero Dirac index.
\end{cor}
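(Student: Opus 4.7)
The plan is to unwind the combinatorial data in Theorem \ref{thm-Dirac-Xpq} and the sign formula \eqref{sign-tau-Xpq}, fix a $\widetilde{K}$-type, and show its total coefficient in $\sum_{r=0}^{k}\mathrm{DI}(X(r,k-r))$ vanishes by a single binomial identity. Concretely, every $\widetilde{K}$-type that can possibly contribute to any $H_D(X(r,k-r))$ is of the form $E_{\tau}$ with $\tau = x\Lambda-\rho_c$ as in \eqref{xLambda-Xpq}, parametrized by a choice of subset $\{j_1'>\cdots>j_v'\}\subseteq\{1,\dots,n-2k-1\}$ together with its complement $\{i_1'>\cdots>i_u'\}$. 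Crucially, this parametrization depends only on $k$ and not on $r$, so the same family of $\widetilde{K}$-types supports the Dirac cohomology of \emph{every} $X(r,k-r)$.

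First I would fix such a $\tau$ and read off, from Theorem \ref{thm-Dirac-Xpq}, that the multiplicity of $E_\tau$ in $H_D(X(r,k-r))$ equals $\binom{k}{r}$. Next, I would invoke the sign computation \eqref{sign-tau-Xpq}, which already established that \emph{every} copy of $E_\tau$ appearing in $H_D(X(r,k-r))$ carries the same sign $(-1)^{j_1'+\cdots+j_v'+r}$; in particular no cancellations occur inside a single $X(r,k-r)$, so the coefficient of $E_\tau$ in $\mathrm{DI}(X(r,k-r))$ is exactly
\[
(-1)^{j_1'+\cdots+j_v'+r}\binom{k}{r}.
\]

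Finally, I would sum this coefficient over $r=0,1,\dots,k$. Since the factor $(-1)^{j_1'+\cdots+j_v'}$ is independent of $r$, it pulls out, leaving
\[
(-1)^{j_1'+\cdots+j_v'}\sum_{r=0}^{k}(-1)^{r}\binom{k}{r}=(-1)^{j_1'+\cdots+j_v'}(1-1)^{k}=0
\]
by the binomial theorem (using $k\ge 1$). Since this vanishing holds for every $\widetilde{K}$-type $E_\tau$ potentially occurring, the virtual $\widetilde{K}$-module $\sum_{r=0}^{k}\mathrm{DI}(X(r,k-r))$ is identically zero.

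There is no serious obstacle here: the content is entirely contained in Theorem \ref{thm-Dirac-Xpq} and \eqref{sign-tau-Xpq}. The only point requiring a moment of care is verifying that, for a fixed $\tau$, the sign $(-1)^{j_1'+\cdots+j_v'+r}$ really is uniform across all $\binom{k}{r}$ copies of $E_\tau$ in $H_D(X(r,k-r))$ -- but this is exactly what the derivation culminating in \eqref{sign-tau-Xpq} asserts, since the $r$-dependence of $\langle \rho-\sigma\rho,\zeta\rangle$ reduces mod $2$ to $r$ regardless of the particular choice of $r$ pairs of consecutive integers from \eqref{k-pair}. Once that uniformity is acknowledged, the corollary is just $(1-1)^k=0$.
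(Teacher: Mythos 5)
Your proof is correct and matches the paper's intended argument exactly: the paper states the corollary follows ``easily'' from \eqref{sign-tau-Xpq} and Theorem \ref{thm-Dirac-Xpq}, and your write-up is precisely the elaboration of that deduction -- fix a $\widetilde{K}$-type parametrized by $\{j_1',\dots,j_v'\}$, note the multiplicity $\binom{k}{r}$ and uniform sign $(-1)^{j_1'+\cdots+j_v'+r}$, and sum to get $(1-1)^k=0$.
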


\section{Dirac index for general modules}  \label{sec-general}
We now study the Dirac index for all unitary representation given in Theorem \ref{thm-Vog84} for $G = Sp(2n,\mathbb{R})$ and $SO^*(2n)$.
Firstly, we describe the $\theta$-stable parabolic subalgebras $\mathfrak{q} = \mathfrak{l} + \mathfrak{u}$
of $\mathfrak{g}$ using $H \in \mathfrak{t}^*$ of the form (c.f. Section 3 of \cite{Pr})
\begin{equation} \label{eq-hparabolic}
\begin{aligned}
H = (&\overbrace{l, \dots, l}^{p_l}, \  \overbrace{l-1, \dots, l-1}^{p_{l-1}}, \  \cdots, \  \overbrace{1, \dots, 1}^{p_1},\  \overbrace{ 0, \dots, 0}^{m}, \\
&\underbrace{-1, \dots, -1}_{q_1}, \  \cdots,\  \underbrace{-l+1,\  \dots,\  -l+1}_{q_{l-1}},\  \underbrace{-l, \dots, -l}_{q_l})
\end{aligned}
\end{equation}
such that
$$\mathfrak{l}_0 = \mathfrak{u}(p_l, q_l) \oplus \mathfrak{u}(p_{l-1}, q_{l-1}) \oplus \dots \oplus \mathfrak{u}(p_1, q_1) \oplus \mathfrak{g}_m',$$
where $\mathfrak{g}_m'$ is of the same type as $\mathfrak{g}$ with rank $m$.
And we choose $(\Delta^+)^\prime(\mathfrak{g},\mathfrak{t})$ such that
\begin{align*}
\rho_{sp}^\prime := (&\overbrace{n, n - 1, \dots , n - p_l + 1}^{p_l\ terms}, \, \overbrace{n - (p_l + q_l), \dots , n - (p_l + q_l) - p_{l-1} + 1}^{p_{l-1}\ terms}, \, \cdots , \\
&\overbrace{m + p_1 + q_1, \dots , m + q_1 + 1}^{p_1\ terms}, m,m - 1, \dots , 1
,\underbrace{-(m + 1), -(m + 2), \dots ,-(m + q_1)}_{q_1\ terms},\\
&\cdots, \underbrace{-(n - (p_l + q_l) - (p_{l-1} + q_{l-1}) + 1), \dots ,-(n - (p_l + q_l) - p_{l-1})}_{q_{l-1}\ terms}, \\
&\underbrace{-(n - (p_l + q_l) + 1), \dots, -(n - p_l)}_{q_l\ terms})
\end{align*}
for $G = Sp(2n,\mathbb{R})$, and
$$\rho_{so}^\prime := \rho_{sp}^\prime - (\overbrace{1, \dots, 1}^{p_l + \dots + p_1},\  \overbrace{ 1, \dots, 1}^{m},
\overbrace{-1, \dots, -1}^{q_1 + \dots + q_l})$$
for $G = SO^*(2n)$.

We now describe the $(\mathfrak{l}, L \cap K)$-modules $Z$ appearing in Theorem \ref{thm-Vog84}. They are all of the form
\begin{equation} \label{eq-Z}
Z(\lambda_l,\dots,\lambda_1,\pi_u) := \mathbb{C}_{\lambda_l} \boxtimes \dots \boxtimes \mathbb{C}_{\lambda_1} \boxtimes \pi_u,
\end{equation}
where each $\mathbb{C}_{\lambda_i}$ is a unitary character of $\mathfrak{u}(p_i,q_i)$, and $\pi_u$ is a
weakly unipotent representation. It suffices to
focus on those $\pi_u$ covered in Sections \ref{sec-sp2nr} and \ref{sec-sostar2n} for the study of $\mathrm{DI}(\mathcal{L}_S(Z))$.

\begin{definition} \label{def-chains}
Let $G = Sp(2n,\mathbb{R})$ or $SO^*(2n)$, and $\mathfrak{q} = \mathfrak{l} + \mathfrak{u}$ be the $\theta$-stable
parabolic subalgebra of $\frg$ defined by the element $H$ in \eqref{eq-hparabolic}. For each $(\mathfrak{l}, L \cap K)$ module
$Z = Z(\lambda_l,\dots,\lambda_1,\pi_u)$ given by \eqref{eq-Z}, the \emph{chains attached to $Z$} are defined by
$$Z(\lambda_l,\dots,\lambda_1,\pi_u)  \leftrightsquigarrow (\mathcal{C}_l^{\lambda_l}, \dots, \mathcal{C}_1^{\lambda_1}, \mathcal{C}_0)$$
where
\begin{align*}
\mathcal{C}_i^{\lambda} := (\mu_i + \lambda, \mu_i - 1 + \lambda, \dots, \mu_i - (p_{i} + q_{i}) + 1 + \lambda)^{p_i, q_i}, \quad \mu_i = \rho_1 - \sum_{t = i+1}^l (p_t + q_t)
\end{align*}
for $1 \leq i \leq l$, and $\mathcal{C}_0 := \Lambda_u$ is the $\mathfrak{g}_m'$-dominant infinitesimal character of the unipotent representation $\pi_u$.
\end{definition}
One reason for introducing the above notation is that the chains of $Z$ gives the infinitesimal character of $\mathcal{L}_S(Z)$.
Moreover, it gives us an easy way to determine the (weakly) goodness or (weakly) fairness of $\mathcal{L}_S(Z)$.

\begin{lemma} \label{lem-rephrase}
Let $Z(\lambda_l,\dots,\lambda_1,\pi_u)$ be an $(\mathfrak{l}, L \cap K)$-module corresponding to the chains
$(\mathcal{C}_l^{\lambda_l}, \dots, \mathcal{C}_1^{\lambda_1}, \mathcal{C}_0)$ given in Definition \ref{def-chains}.
Then $\mathcal{L}_S(Z(\lambda_l,\dots,\lambda_1,\pi_u))$ is in the good range if and only if
$$\lambda_i - \lambda_{i-1} > -1 \ \text{for}\ \ i = l, l-1, \dots, 2; \quad \text{and}\quad \lambda_1 + (m+1) > u_m $$
In other words, $\mathcal{L}_S(Z(\lambda_l,\dots,\lambda_1,\pi_u))$ is good if and only if
\begin{center}
(smallest entry of the $i^{th}$-chain) $>$ (largest entry of the $(i-1)^{th}$-chain), \quad $i = l, \dots, 1$.
\end{center}
And $\mathcal{L}_S(Z(\lambda_l,\dots,\lambda_1,\pi_u))$ is  weakly good if we replace the above strict inequalities with $\geq$.

Moreover, $\mathcal{L}_S(Z(\lambda_l,\dots,\lambda_1,\pi_u))$ is in the fair range if and only if
$$
\mu_l- \frac{(p_l + q_l)(p_l + q_l -1)}{2}
> \cdots >   \mu_1 - \frac{(p_1 + q_1)(p_1 + q_1 -1)}{2} > 0.
$$
In other words, $\mathcal{L}_S(Z(\lambda_l,\dots,\lambda_1,\pi_u))$ is fair if and only if
\begin{center}
(average value of the entries of $\mathcal{C}_l^{\lambda_l}$) $> \dots >$ (average value of the entries of $\mathcal{C}_1^{\lambda_1}$) $> 0$
\end{center}
And $\mathcal{L}_S(Z(\lambda_l,\dots,\lambda_1,\pi_u))$ is weakly fair if we replace the above strict inequalities with $\geq$.
\end{lemma}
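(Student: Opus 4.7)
The plan is to unwind the definitions \eqref{good-range}--\eqref{weakly-fair-range} of (weakly) good and (weakly) fair ranges, and to show that each pairing $\langle \Lambda_Z + \rho(\fru), \alpha\rangle$ (respectively its restriction to $\mathfrak{z}$) reduces to a difference of entries (respectively of averages) of the chains in Definition \ref{def-chains}. First I would carry out a block-by-block identification of $\Lambda_Z + \rho(\fru)$: a direct computation with the unitary character $\bbC_{\lambda_i}$ of $\mathfrak{u}(p_i, q_i)$ together with the formulas for $\rho_{sp}^\prime$ and $\rho_{so}^\prime$ shows that the $p_i + q_i$ coordinates of $\Lambda_Z + \rho(\fru)$ supported on the $i$-th block become, after applying a suitable element of $W(\frl, \frt)$ that flips signs between the $P_i$ and $N_i$ positions, exactly the entries of $\mathcal{C}_i^{\lambda_i}$; the coordinates at the core positions form $\mathcal{C}_0 = \Lambda_u$. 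The choice $\mu_i = \rho_1 - \sum_{t > i}(p_t + q_t)$ is calibrated precisely so that consecutive chains abut when all $\lambda_i$ vanish.

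For the (weakly) good range I would then enumerate the roots in $\Delta(\fru, \frt)$ by which two blocks of $H$ they link: a root $\alpha$ either connects block $i$ to block $j$ with $j < i$, or connects a $\mathfrak{u}(p_i, q_i)$-block to the core. A case analysis over the three root types of $\frsp(2n)$ (respectively two types for $\frso^*(2n)$) shows that $\langle \Lambda_Z + \rho(\fru), \alpha\rangle$ always equals a difference of two chain entries, and the minimum value over all $\alpha$ linking two consecutive blocks $i$ and $i-1$ is attained when one takes the smallest entry of $\mathcal{C}_i^{\lambda_i}$ minus the largest entry of $\mathcal{C}_{i-1}^{\lambda_{i-1}}$, which equals $\lambda_i - \lambda_{i-1} + 1$. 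Similarly, the minimum for roots linking block $1$ to the core is $m + 1 + \lambda_1 - u_m$. Non-adjacent block pairings follow from adjacent ones, so \eqref{good-range} (respectively \eqref{weakly-good-range}) becomes the inequalities $\lambda_i - \lambda_{i-1} > -1$ and $\lambda_1 + (m+1) > u_m$ (with $\geq$ in the weakly good case), as claimed.

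For the (weakly) fair range I would restrict each $\alpha \in \Delta(\fru, \frt)$ to the center $\mathfrak{z}$ of $\frl$. Since $\frg_m^\prime$ is simple of type $C$ or $D$ it has trivial center, so $\mathfrak{z}$ has a basis consisting of the trace directions $v_1, \dots, v_l$ of the $\mathfrak{u}(p_i, q_i)$-factors. A short calculation shows that any $\alpha$ linking block $i$ to block $j$ (with $i > j$, under the convention that the core's trace direction is $v_0 := 0$) restricts on $\mathfrak{z}$ to a positive multiple of $v_i^* - v_j^*$; equivalently, $\langle \Lambda_Z + \rho(\fru), \alpha|_{\mathfrak{z}}\rangle$ is a positive multiple of the difference of the averages of the corresponding chains, with $\mathcal{C}_0$ contributing $0$. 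Hence \eqref{fair-range}--\eqref{weakly-fair-range} reduces to the claimed inequalities between consecutive chain averages.

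The hard part will be the sign bookkeeping in the first two steps: the Cartan of each $\mathfrak{u}(p_i, q_i)$ splits into a ``positive'' $P_i$-part and a ``negative'' $N_i$-part when embedded into $\frt$, so the entries of $\Lambda_Z$ on $P_i$ and on $N_i$ pick up opposite signs of $\lambda_i$, while the non-compact roots ($e_a + e_b$ and $2e_a$ in $\frsp(2n)$; $e_a + e_b$ in $\frso^*(2n)$) couple $P$-positions to $N$-positions. Verifying that these sign contributions always recombine to give a clean difference of two chain entries is the only nontrivial check; once it is in hand, the remainder of the argument is routine unwinding.
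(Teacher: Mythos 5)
The paper gives no proof of Lemma~\ref{lem-rephrase}; it is stated as a routine consequence of the definitions \eqref{good-range}--\eqref{weakly-fair-range} together with Definition~\ref{def-chains}, and the proposal correctly identifies the natural approach (block-by-block identification of $\Lambda_Z + \rho(\fru)$ with the chain entries, with a sign flip on the $N$-blocks, followed by a reduction of the range conditions to inequalities among chain entries and among chain averages). The overall structure and the conclusions are right, but two reductions are stated more cleanly than they actually are, and fixing them requires an extra observation.

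First, the assertion that $\langle \Lambda_Z + \rho(\fru), \alpha\rangle$ is always a difference of two chain entries is not correct. For a compact root $e_a - e_b$ with $a$ in a $P$-block and $b$ in an $N$-block, the $b$-coordinate of $\Lambda_Z + \rho(\fru)$ is the \emph{negative} of a chain entry, so the pairing is a \emph{sum} of two chain entries --- and this already occurs with $a \in P_i$, $b \in N_i$ (same $\mathfrak{u}(p_i,q_i)$-factor), which your block enumeration (``$\alpha$ connects block $i$ to block $j$ with $j<i$'') does not account for. The non-compact roots $e_a + e_b$ with both $a,b$ in $P$-blocks (or one in a $P$-block and one in the core), and $2e_a$ with $a$ in a $P$-block (for $\frsp$), likewise produce sums or scalar multiples rather than differences. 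The chain inequalities do imply these sum-type positivity constraints, but only via a bridge that is missing from your outline: since $\mathcal{C}_0 = \Lambda_u$ is $\frg_m'$-dominant, its largest entry is $\geq 0$ (and for $\frso^*(2m)$ it dominates $|(\mathcal{C}_0)_j|$ for all $j$, because $(\mathcal{C}_0)_{m-1} + (\mathcal{C}_0)_m \geq 0$), so the adjacency condition $\min\mathcal{C}_1^{\lambda_1} \geq \max \mathcal{C}_0$ forces all entries of $\mathcal{C}_1^{\lambda_1}, \dots, \mathcal{C}_l^{\lambda_l}$ to be nonnegative (strictly positive in the good case), after which every sum-type pairing is automatically $\geq 0$ (resp.\ $>0$). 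The remark ``non-adjacent block pairings follow from adjacent ones'' covers only the chaining of difference-type inequalities, not the sum-type roots.

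Second, the same issue recurs in the fair-range step: a root $e_a - e_b$ with $a \in P_i$, $b \in N_i$ restricts on $\mathfrak z$ to a positive multiple of $v_i^*$ alone (not of a difference $v_i^* - v_j^*$), and $e_a + e_b$ with $a,b$ in two distinct $P$-blocks restricts to a positive multiple of $v_i^* + v_j^*$. The resulting constraints --- positivity of each individual chain average, and positivity of sums of averages --- are again implied by the claimed chain of inequalities but should be recorded, since otherwise the ``if'' direction is incomplete. With these two points made explicit the rest of the outline is routine and correct, and it is presumably what the authors had in mind in omitting the proof.
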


\begin{example} \label{eg-so14-1}
Let $G = SO^*(14)$ and $\mathfrak{q} = \mathfrak{l} + \mathfrak{u}$ is determined by $H = (2,1,0,0,0,-1,-2)$,
so that $\mathfrak{l}_0 = \mathfrak{u}(1,1) + \mathfrak{u}(1,1)+\mathfrak{so}^*(6)$.
The chains corresponding to $Z(0, 0, \pi_u)$ with $\pi_u = X(1,0)$ are
$$\mathcal{C}_2^0 = (6,5)^{1,1}, \qquad \mathcal{C}_1^0 = (4,3)^{1,1}, \qquad \mathcal{C}_0 = (1,1,0).$$
So $\mathcal{L}_S(Z(0,0,\pi_u))$ is in the good range by Lemma \ref{lem-rephrase}.

The chains corresponding to $Z(-2, -1, \pi_u)$ are
$$\mathcal{C}_2^{-2} = (4,3)^{1,1}, \qquad \mathcal{C}_1^{-1} = (3,2)^{1,1}, \qquad \mathcal{C}_0 = (1,1,0).$$
Thus $\mathcal{L}_S(Z(-2,-1,\pi_u))$ is in the weakly good range.\hfill\qed
\end{example}

\begin{example}\label{eg-sp22-1}
Let $G = Sp(22,\mathbb{R})$, and let $\mathfrak{q}$ be the $\theta$-stable parabolic subalgebra of $\frg$ defined by $H = (1,0,0,0,0,0,0,0,0,-1,-1)$. The chains corresponding to $\mathcal{L}_S(Z(-6,X(4, 2; 0,0)))$ are
$$
\mathcal{C}_1^{-6} = (5,4,3)^{1,2}, \quad \mathcal{C}_0=(5,4,3,2,1,0,-1,-2).
$$
The module $\mathcal{L}_S(Z(-6,X(4, 2; 0,0)))$ is not weakly good, but it is fair.  \hfill \qed
\end{example}

We begin our study of the Dirac index of $\mathcal{L}_S(Z(\lambda_l,\dots,\lambda_1,\pi_u))$ by looking at the special
case when $\lambda_l = \dots = \lambda_1 = 0$. Note that for all $\pi_u$ in Section \ref{sec-sp2nr} and Section \ref{sec-sostar2n}, $\mathcal{L}_S(Z(0,\dots,0,\pi_u))$ is always in good range by Lemma \ref{lem-rephrase}. Using Proposition 4.1 of \cite{DW4}, one easily deduce the following.

\begin{thm} \label{thm-DH}
Let $G$ be $Sp(2n,\mathbb{R})$ or $SO^*(2n)$, with $\theta$-stable parabolic $\mathfrak{q} = \mathfrak{l} + \mathfrak{u}$
and $Z(0,\dots,0,\pi_u)$ be  defined as above.
Then $\mathrm{DI}(\mathcal{L}_S(Z(0,\dots,0,\pi_u)))$ is nonzero if and only if $\pi_u$ has nonzero Dirac index.

More precisely, let $\mathfrak{g}_A := \mathfrak{u}(p,q)$ where $p := \sum_{t=1}^l p_t$,  $q := \sum_{t=1}^l q_t$,
and $\mathfrak{q}_A = \mathfrak{l}_A + \mathfrak{u}_A$ be a $\theta$-stable parabolic subalgbera of $\mathfrak{g}_A$
with $(\mathfrak{l}_A)_0 := \mathfrak{u}(p_l,q_l) \oplus \dots \oplus \mathfrak{u}(p_1,q_1)$. Suppose
$$\mathrm{DI}(A_{\mathfrak{q}_A}(\mathbb{C}_{n-\frac{p+q+1}{2}})) = \sum_{u,v} \epsilon_{u,v} \widetilde{E}_{(\kappa_u| \kappa_v)},\quad \mathrm{DI}(\pi_u) = \sum_w \delta_w \widetilde{E}_{\kappa_w}.$$
Then
$$\mathrm{DI}(\mathcal{L}_S(Z(0,\dots,0,\pi_u))) = \sum_{v,w} \epsilon_{u,v} \delta_w \widetilde{E}_{(\kappa_u;\kappa_w;\underline{\kappa_v})},$$
where $(\underline{v_1, \dots, v_{\ell}})$ is defined to be $(\underline{v_1, \dots, v_{\ell}}) := (-v_{\ell}, \dots, -v_1)$.
\end{thm}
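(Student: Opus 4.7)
The plan is to invoke Proposition 4.1 of \cite{DW4}, which expresses the Dirac index of a good-range cohomologically induced module $\mathcal{L}_S(Z)$ in terms of the Dirac index of $Z$. The first step is to verify the good range condition. By Lemma \ref{lem-rephrase}, the chains attached to $Z(0,\ldots,0,\pi_u)$ are $\mathcal{C}_l^0,\ldots,\mathcal{C}_1^0,\mathcal{C}_0$, and a direct computation using $\mu_i - \mu_{i-1} = p_i + q_i$ shows that consecutive chains $\mathcal{C}_i^0,\mathcal{C}_{i-1}^0$ are separated by exactly one, while the smallest entry of $\mathcal{C}_1^0$ equals $m+1$ and exceeds the largest entry $\leq m$ of $\mathcal{C}_0 = \Lambda_u$. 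Hence $\mathcal{L}_S(Z(0,\ldots,0,\pi_u))$ lies in the good range and Proposition 4.1 of \cite{DW4} applies.

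Next, since the inducing module is an outer tensor product $\mathbb{C}_0 \boxtimes \cdots \boxtimes \mathbb{C}_0 \boxtimes \pi_u$ of $(\mathfrak{l}_i, L_i \cap K)$-modules on each factor of $\mathfrak{l}$, its Dirac index factorizes as a product of the Dirac indices on each Type A factor and on $\mathfrak{g}_m'$. Regrouping the Type A factors and cohomologically inducing them up to $\mathfrak{g}_A = \mathfrak{u}(p,q)$ via the $\theta$-stable parabolic $\mathfrak{q}_A$, the combined Type A contribution is exactly $\mathrm{DI}(A_{\mathfrak{q}_A}(\mathbb{C}_{n-(p+q+1)/2}))$ by Theorem \ref{thm-DI-Aqlambda}. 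The specific value $\lambda = n - (p+q+1)/2$ is forced by matching infinitesimal characters: the Type A portion of the infinitesimal character of $\mathcal{L}_S(Z(0,\ldots,0,\pi_u))$ must equal $\lambda + \rho_{\mathfrak{u}(p,q)}$.

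The block structure of $\mathfrak{l}$ prescribed by $H$ in \eqref{eq-hparabolic} places the $p$-entries of each Type A factor above the $\mathfrak{g}_m'$ block and the $q$-entries below, which explains the concatenation $(\kappa_u;\kappa_w;\underline{\kappa_v})$: positive Type A entries $\kappa_u$ on top, the unipotent highest weight $\kappa_w$ in the middle, and the reversed and negated Type A entries $\underline{\kappa_v}$ at the bottom. Reading Proposition 4.1 of \cite{DW4} under this block decomposition produces the claimed formula, with the signs $\epsilon_{u,v}\delta_w$ multiplying term by term.

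The main obstacle is to verify that whenever $\widetilde{E}_{(\kappa_u|\kappa_v)}$ and $\widetilde{E}_{\kappa_w}$ are both nonzero, the concatenated weight $(\kappa_u;\kappa_w;\underline{\kappa_v})$, after subtracting the appropriate $\rho_c$, is $\Delta(\mathfrak{k},\mathfrak{t})$-dominant regular, and no sign cancellations arise across the three blocks when one sums over $W(\mathfrak{l},\mathfrak{t})^1$. This follows from the gap-one separation established in the good-range verification, which prevents any inter-block coincidence of entries, and from the fact that $W(\mathfrak{l},\mathfrak{t})^1$ itself factors as a product of the Weyl group contributions from the Type A blocks and from $\mathfrak{g}_m'$, so the determinant signs $\det(w)$ match up with $\epsilon_{u,v}\delta_w$ as required. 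With these combinatorial compatibilities in hand, the formula for $\mathrm{DI}(\mathcal{L}_S(Z(0,\ldots,0,\pi_u)))$ is immediate, and the equivalence ``nonzero iff $\mathrm{DI}(\pi_u)\neq 0$'' follows because the $A_{\mathfrak{q}_A}$ factor is always nonzero by Theorem \ref{thm-DI-Aqlambda}.
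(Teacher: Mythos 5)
Your approach matches the paper's exactly: the paper's entire argument is the observation that $\mathcal{L}_S(Z(0,\dots,0,\pi_u))$ lies in the good range by Lemma \ref{lem-rephrase} (since the $\lambda_i=0$ chains sit consecutively above $\Lambda_u$), followed by citing Proposition 4.1 of \cite{DW4}. Your additional elaboration — the factorization through the outer tensor product structure of $Z$, the block reading of $H$ giving the concatenation $(\kappa_u;\kappa_w;\underline{\kappa_v})$, and the absence of inter-block cancellations — is exactly the content that Proposition 4.1 of \cite{DW4} packages, so it is a faithful unpacking rather than a different route. (One small slip: for $SO^*(2n)$ the smallest entry of $\mathcal{C}_1^0$ is $m$, not $m+1$, though this does not affect the good-range conclusion since the largest entry of $\Lambda_u$ is still strictly smaller.)
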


%\begin{proof}
%By induction in stages, we have
%$$\mathrm{DI}(\mathcal{L}_S(Z(0,\dots,0,\pi_u))) = \mathcal{L}_{S'}(A_{\mathfrak{q}_A}(\mathbb{C}_{n-\frac{p+q+1}{2}}) \boxtimes \pi_u)$$
%where the module on the right is induced from the $\theta$-stable parabolic algebra corresponding to $H' = (\overbrace{1, \dots, 1}^{p}, \overbrace{0,\dots, 0}^{m}, \overbrace{-1, \dots, -1}^{q})$.
%Therefore $H_D(\mathcal{L}_S(Z(0,\dots,0,\pi_u)))$ can be computed from main result of \cite{DH}. Indeed, from the proof of their main theorem, the only $K$-type in $\mathcal{L}_S(Z(0,\dots,0,\pi_u))$
%contributing to it Dirac cohomology is the lowest $K$-type. So it is easy to keep track of the sign of its Dirac index, and we get our desired result.
%\end{proof}

\begin{example} \label{eg-so14-2}
We continue with Example \ref{eg-so14-1}, where $G = SO^*(14)$, and $Z = Z(0, 0, X(1,0))$.
Then $n - \frac{p+q+1}{2} = 7 - \frac{5}{2} = \frac{9}{2}$, and by Theorem \ref{thm-DI-Aqlambda},
\begin{align*}
\mathrm{DI}(A_{\mathfrak{q}_A}(\mathbb{C}_{\frac{9}{2}}))
= &+ \widetilde{E}_{(\frac{3}{2},-\frac{1}{2}|\frac{1}{2},-\frac{3}{2}) + (\frac{9}{2},\frac{9}{2}|\frac{9}{2},\frac{9}{2})} - \widetilde{E}_{(\frac{1}{2},-\frac{1}{2}|\frac{3}{2},-\frac{3}{2})+ (\frac{9}{2},\frac{9}{2}|\frac{9}{2},\frac{9}{2})}\\
&- \widetilde{E}_{(\frac{3}{2},-\frac{3}{2}|\frac{1}{2},-\frac{1}{2})
+ (\frac{9}{2},\frac{9}{2}|\frac{9}{2},\frac{9}{2})}
 +\widetilde{E}_{(\frac{1}{2},-\frac{3}{2}|\frac{3}{2},-\frac{1}{2})+ (\frac{9}{2},\frac{9}{2}|\frac{9}{2},\frac{9}{2})}\\
= &+ \widetilde{E}_{(6,4|5,3)} - \widetilde{E}_{(5,4|6,3)} - \widetilde{E}_{(6,3|5,4)} +
\widetilde{E}_{(5,3|6,4)}.
\end{align*}
Applying Theorem \ref{thm-Dirac-Xpq} to the unipotent representation $\pi_u = X(1,0)$ of $SO^*(6)$ gives
$$\mathrm{DI}(\pi_u) = - \widetilde{E}_{(1,0,-1)}.$$
So Theorem \ref{thm-DH} implies that
\begin{align*} \mathrm{DI}(\mathcal{L}_S(Z(0,0,\pi_u))) = &- \widetilde{E}_{(6,4|1,0,-1|-3,-5)} + \widetilde{E}_{(5,4|1,0,-1|-3,-6)} \\ &+ \widetilde{E}_{(6,3|1,0,-1|-4,-5)} -\widetilde{E}_{(5,3|1,0,-1|-4,-6)}.
\end{align*}
\hfill\qed
\end{example}

As for the general case when $\mathcal{L}_S(Z(\lambda_l,\dots,\lambda_1,\pi_u))$ is in weakly fair range,
we begin by studying all possible $\lambda_l$, $\dots$, $\lambda_1$ (or equivalently, $\mathcal{C}_l^{\lambda_l}$, $\dots$, $\mathcal{C}_1^{\lambda_1}$, $\mathcal{C}_0$) such that the infinitesimal character satisfies Theorem \ref{thm-HP}.

Indeed, one only needs to focus on the chains $\mathcal{C}_i^{\lambda_i}$ that are \emph{interlaced} with the `unipotent' chain $\mathcal{C}_0$, since the case when $\mathcal{C}_i^{\lambda_i}$ and $\mathcal{C}_j^{\lambda_j}$ are interlaced is identical to the situation of $U(p,q)$ given in \cite{DW4}.
To start with, recall the chain (i.e., the infinitesimal character) corresponding to the unipotent representation
$X(r,s;\epsilon,\eta)$ in $Sp(2n,\mathbb{R})$ is equal to
$$
\mathcal{C}_0 = (n-k, \dots, k+1, k; k-1, k-1, \dots, 1,1,0), \quad \text{where}\ 2k = r+s \leq n;
$$
and the chain for $X'(r,s;\epsilon,\eta)$ in $Sp(2n,\mathbb{R})$ for odd $n$ is of the form
$$
\mathcal{C}_0 = (\frac{n-1}{2}, \frac{n-1}{2}, \dots, 1, 1,0);
$$
and the chain for $X(r,s)$ in $SO^*(2n)$ is of the form
$$
\mathcal{C}_0 = (n-k-1, \dots, k+1; k, k, \dots, 1, 1, 0), \quad \text{where}\ 2k = 2(r+s) < n.
$$
It is obvious that for any weakly fair $\mathcal{L}_S(Z(\lambda_l, \dots, \lambda_1,X'(r,s;\epsilon,\eta)))$ such that
$\mathcal{C}_t^{\lambda_t}$ and $\mathcal{C}_0$ are interlaced, the infinitesimal character
of $\mathcal{L}_S(Z(\lambda_l, \dots, \lambda_1,X'(r,s;\epsilon,\eta)))$ violates Theorem \ref{thm-HP}, and hence it must have
zero Dirac cohomology and Dirac index.

As for the other two types of unipotent representations, in order for the infinitesimal
character of $\mathcal{L}_S(Z(\lambda_l, \dots, \lambda_1,\pi_u))$ to satisfy Theorem \ref{thm-HP},
its chains $\mathcal{C}_l^{\lambda_l}$, $\dots$, $\mathcal{C}_1^{\lambda_0}$, $\mathcal{C}_0$ must be of the form:
\begin{equation} \label{eq-interlaced}
\begin{tabular}{ccccccccccccc}
$(\mathcal{A}$; & $\mathcal{R}_l)^{p_l,q_l}$ &  & $(\mathcal{R}_{l-1})^{p_{l-1},q_{l-1}}$ & $\dots$  & $\dots$ & $(\mathcal{R}_{1})^{p_{1},q_{1}}$ &  &  \tabularnewline
 & $(\mathcal{R}_l;$ \quad \quad \quad & $\mathcal{B}_l$; & $\mathcal{R}_{l-1}$;\quad \quad \quad \quad  & $\mathcal{B}_{l-1}$; & $\dots$; & $\mathcal{R}_{1}$; \quad \quad & $\mathcal{B}_1$; & $\mathcal{K}) = \mathcal{C}_0$,
\end{tabular}
\end{equation}
where
$$\mathcal{K} := \begin{cases} (k-1,k-1, \dots, 1,1,0) &\text{if}\ G = Sp(2n,\mathbb{R})\\ (k, k, \dots, 1,1,0) &\text{if}\ G = SO^*(2n) \end{cases}$$
(we also allow $p_l = q_l = 0$, so that $\mathcal{C}_l$ does not exist in \eqref{eq-interlaced}). For the rest of this section, we study $\mathrm{DI}(\mathcal{L}_S(Z(\lambda_l, \dots, \lambda_1,\pi_u)))$
for $\pi_u = X(r,s;\epsilon,\eta)$ or $X(r,s)$, whose chains are of the form \eqref{eq-interlaced}.

 The following result strengthens Lemma 7.2.18(b) of \cite{Vog81}.

\begin{lemma}\label{lemma-translation}
Let $\lambda_0\in\frh^*$ be dominant integral for $\Delta^+(\frg, \frh)$ which may be singular. Let $F_{\nu}$ be a finite-dimensional $(\frg, K)$ module with extreme weight $\nu$.  Assume that $\lambda_0+\nu$ is dominant for $\Delta^+(\frg, \frh)$, and that
\begin{equation}\label{finite-extreme}
w(\lambda_0 + \nu)=\lambda_0 + \mu,
\end{equation}
where $\mu$ is a weight of $F_{\nu}$ and $w\in W(\frg, \frh)$. Assume moreover that
\begin{equation}\label{nu-dot-alpha}
\langle \nu, \alpha \rangle =0,
\end{equation}
where $\alpha$ is any root in $\Delta^+(\frg, \frh)$ such that $\langle \lambda_0, \alpha\rangle=0$. Then we must have $\mu=\nu$.
\end{lemma}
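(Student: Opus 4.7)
The plan is to exploit the norm identity forced by the orthogonality of $w$, combined with dominance, to push the situation into a ``singular part'' of $\lambda_0$ where the extra hypothesis applies.

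First, I would record two ingredients. Since $\lambda_0+\nu$ is dominant and $w(\lambda_0+\nu)=\lambda_0+\mu$ lies in its Weyl orbit, the standard fact that a dominant weight minus any element of its Weyl orbit lies in the positive $\Z_{\geq 0}$-cone of the simple roots gives
$$\nu-\mu=\sum_{i} n_i\, \alpha_i,\qquad n_i\in\Z_{\geq 0},$$
with $\alpha_i$ simple in $\Delta^+(\frg,\frh)$. Second, since $w$ is orthogonal, $\|\lambda_0+\mu\|^2=\|\lambda_0+\nu\|^2$, which rearranges to
$$2\langle \lambda_0,\mu-\nu\rangle=\|\nu\|^2-\|\mu\|^2. \qquad(\star)$$

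The core of the argument is squeezing two competing inequalities into equalities. On one hand, dominance of $\lambda_0$ yields $\langle \lambda_0,\alpha_i\rangle\geq 0$ for every simple root, hence $\langle \lambda_0,\nu-\mu\rangle=\sum_i n_i\langle \lambda_0,\alpha_i\rangle\geq 0$. On the other hand, since $\nu$ is an extreme weight of $F_\nu$, it lies in the $W$-orbit of the highest weight, and every weight of $F_\nu$ sits inside the convex hull of this orbit; as $\xi\mapsto \|\xi\|^2$ is convex, it attains its maximum at the vertices, giving $\|\mu\|^2\leq \|\nu\|^2$. Feeding both into $(\star)$ forces $\|\mu\|=\|\nu\|$ and $\langle \lambda_0,\nu-\mu\rangle=0$. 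The second equality reads $\sum_i n_i\langle \lambda_0,\alpha_i\rangle=0$, so $\langle \lambda_0,\alpha_i\rangle=0$ for every index $i$ with $n_i>0$.

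Finally, the hypothesis enters: every simple root $\alpha_i$ appearing in the decomposition of $\nu-\mu$ is perpendicular to $\lambda_0$, hence by assumption perpendicular to $\nu$. Therefore $\langle \nu,\nu-\mu\rangle=\sum_i n_i \langle \nu,\alpha_i\rangle =0$, which yields $\langle \nu,\mu\rangle=\|\nu\|^2=\|\nu\|\,\|\mu\|$. The Cauchy--Schwarz equality case then forces $\mu=\nu$, with the degenerate case $\nu=0$ giving $\mu=0$ directly from $\|\mu\|=\|\nu\|=0$. The one nontrivial step, in my view, is the bound $\|\mu\|\leq\|\nu\|$, which rests on interpreting ``extreme weight'' in the standard sense that $\nu$ lies in the $W$-orbit of the highest weight of $F_\nu$; once that identification is made, the inequality chain is entirely routine, and the hypothesis $\langle\nu,\alpha\rangle=0$ for $\alpha\perp\lambda_0$ is precisely what bridges the gap left by dropping regularity of $\lambda_0$.
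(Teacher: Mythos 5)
Your proof is correct and takes essentially the same route as the paper's: both write $\nu-\mu$ as a nonnegative combination of simple roots (the paper cites Vogan's Lemma 6.3.28 for this), squeeze the norm identity against dominance of $\lambda_0$ and extremality of $\nu$ to force $\langle\lambda_0,\alpha_i\rangle=0$ for each contributing simple root, and then invoke the orthogonality hypothesis. The only difference is cosmetic in the final step: you finish via Cauchy--Schwarz applied to $\langle\nu,\mu\rangle=\|\nu\|^2$, whereas the paper expands $\|\mu\|^2=\|\nu-\sum n_\alpha\alpha\|^2$ directly to conclude $\sum n_\alpha\alpha=0$.
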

\begin{proof}
By \eqref{finite-extreme}, we have $\mu=w(\lambda_0+\nu)-\lambda_0$. Thus
\begin{equation}\label{mu-dot-mu}
\langle \mu, \mu \rangle=\langle \lambda_0+\nu, \lambda_0+\nu \rangle - 2 \langle  w(\lambda_0+\nu), \lambda_0 \rangle +\langle\lambda_0, \lambda_0 \rangle.
\end{equation}
On the other hand, by Lemma 6.3.28 of \cite{Vog81},
\begin{equation}\label{w-lambda-nu}
w(\lambda_0 + \nu)=\lambda_0 + \nu -\sum_{\alpha\in\Delta^+(\frg, \frh)} n_{\alpha} \alpha,
\end{equation}
where $n_{\alpha}$ are non-negative real numbers. Therefore, by dominance of $\lambda_0$,
\begin{equation}\label{w-dot-mu}
\langle w(\lambda_0 + \nu), \lambda_0 \rangle \leq \langle \lambda_0 + \nu, \lambda_0 \rangle.
\end{equation}
Substituting \eqref{w-dot-mu} into \eqref{mu-dot-mu}, we get
$$
\langle \mu, \mu \rangle \geq \langle \lambda_0+\nu, \lambda_0+\nu \rangle - 2 \langle  \lambda_0+\nu, \lambda_0 \rangle +\langle\lambda_0, \lambda_0 \rangle=\langle \nu, \nu\rangle.
$$
We must have $\langle \mu, \mu \rangle=\langle \nu, \nu \rangle$ since $\mu$ is a weight of $F_{\nu}$. Thus \eqref{w-dot-mu} must be an equality, and we conclude that $\lambda_0$ is perpendicular to any $\alpha$ in \eqref{w-lambda-nu} whose coefficient $n_{\alpha}$ is positive. Combining  \eqref{finite-extreme} and \eqref{w-lambda-nu}, we have that
$$
\mu=\nu-\sum_{\alpha\in\Delta^+(\frg, \frh), \, n_{\alpha}>0} n_{\alpha} \alpha.
$$
Thus
\begin{align*}
\langle \nu, \nu \rangle=\langle \mu, \mu \rangle &= \langle \nu, \nu \rangle -2 \sum_{\alpha\in\Delta^+(\frg, \frh), \, n_{\alpha}>0} \langle \nu, n_{\alpha} \alpha \rangle +\|\sum_{\alpha\in\Delta^+(\frg, \frh), \ n_{\alpha}>0} n_{\alpha} \alpha\|  \\
&=\langle \nu, \nu \rangle +\|\sum_{\alpha\in\Delta^+(\frg, \frh), \, n_{\alpha}>0} n_{\alpha} \alpha\|,
\end{align*}
where the last step uses \eqref{nu-dot-alpha}. We conclude that
$$
\sum_{\alpha\in\Delta^+(\frg, \frh), \, n_{\alpha}>0} n_{\alpha} \alpha=0.
$$
Thus $\mu=\nu$ as desired.
\end{proof}

\begin{prop} \label{prop-translation}
Let $\mathcal{L}_S(Z(\lambda_l,\dots,\lambda_1,\pi_u))$ be a $(\mathfrak{g},K)$ module
given by Theorem \ref{thm-Vog84} such that its chains $\mathcal{C}_l^{\lambda_l}$, $\dots$, $\mathcal{C}_1^{\lambda_1}$, $\mathcal{C}_0$ satisfy \eqref{eq-interlaced}. Suppose the Dirac index of
$\mathcal{L}_S(Z(0,\dots,0,\pi_u))$ is given in Theorem \ref{thm-DH} by
$$
\mathrm{DI}(\mathcal{L}_S(Z(0,\dots,0,\pi_u))) = \sum_{z} \epsilon_{z} \widetilde{E}_{\kappa_z}.
$$
Then
$$\mathrm{DI}(\mathcal{L}_S(Z(\lambda_l,\dots,\lambda_1,\pi_u))) = \sum_{z} \epsilon_{z} \widetilde{E}_{\kappa_z - (\lambda)},$$
where
\begin{align*}
(\lambda) := (&\overbrace{\lambda_l, \dots, \lambda_l}^{p_l}, \  \overbrace{\lambda_{l-1}, \dots, \lambda_{l-1}}^{p_{l-1}}, \  \dots, \  \overbrace{1, \dots, 1}^{p_1},\  \overbrace{ 0, \dots, 0}^{m}, \\
&\underbrace{-1, \dots, -1}_{q_1}, \  \cdots,\  \underbrace{-\lambda_{l-1},\  \dots,\  -\lambda_{l-1}}_{q_{l-1}},\  \underbrace{-\lambda_l, \dots, -\lambda_l}_{q_l})
\end{align*}
\end{prop}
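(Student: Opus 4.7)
My plan is to apply Zuckerman's translation principle to the coherent family containing $\mathcal{L}_S(Z(0,\dots,0,\pi_u))$. Let $\lambda_0$ denote the infinitesimal character of $\mathcal{L}_S(Z(0,\dots,0,\pi_u))$; by the standard formula for the infinitesimal character of a cohomologically induced module together with the interlacing condition \eqref{eq-interlaced}, the module $\mathcal{L}_S(Z(\lambda_l,\dots,\lambda_1,\pi_u))$ has infinitesimal character $\lambda_0 + \nu$ (up to $W(\mathfrak{g},\mathfrak{t})$-conjugacy), where $\nu$ is a specific extreme weight tied to the vector $(\lambda)$. By \cite[Theorem 7.2.23]{Vog81}, already invoked in the paragraph preceding the proposition, $\mathcal{L}_S(Z(\lambda_l,\dots,\lambda_1,\pi_u))$ is the image of $\mathcal{L}_S(Z(0,\dots,0,\pi_u))$ under the translation functor $\psi_{\lambda_0}^{\lambda_0 + \nu}$, realized as the projection of $\pi \otimes F_\nu$ onto the generalized infinitesimal character $\lambda_0 + \nu$, where $F_\nu$ is the irreducible finite-dimensional $(\mathfrak{g},K)$-module of extreme weight $\nu$.

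The second step shows that Dirac index commutes with this translation functor. Since $D \in U(\mathfrak{g}) \otimes C(\mathfrak{p})$ commutes with the diagonal $\mathfrak{g}$-action, one has $\mathrm{DI}(\pi \otimes F_\nu) = \mathrm{DI}(\pi) \otimes F_\nu|_{\widetilde{K}}$ as virtual $\widetilde{K}$-modules; and projection onto an infinitesimal character block is exact and commutes with Dirac index. Combining these,
\[
\mathrm{DI}(\mathcal{L}_S(Z(\lambda_l,\dots,\lambda_1,\pi_u))) \;=\; \sum_z \epsilon_z \, \Pi_{\lambda_0 + \nu}\!\bigl(\widetilde{E}_{\kappa_z} \otimes F_\nu|_{\widetilde{K}}\bigr),
\]
where $\Pi_{\lambda_0 + \nu}$ retains those $\widetilde{K}$-type summands $\widetilde{E}_{\kappa_z + \mu}$, with $\mu$ a weight of $F_\nu$, for which $\kappa_z + \mu + \rho_c$ is $W(\mathfrak{g},\mathfrak{t})$-conjugate to $\lambda_0 + \nu$ (by the Huang-Pand\v zi\'c theorem).

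The third step applies Lemma \ref{lemma-translation} to identify the surviving weights. For each $\kappa_z$, choose $w_z \in W(\mathfrak{g},\mathfrak{t})$ with $w_z^{-1}(\kappa_z + \rho_c) = \lambda_0$; the conjugacy condition becomes $\lambda_0 + w_z^{-1} \mu \sim \lambda_0 + \nu$, and Lemma \ref{lemma-translation} then forces $w_z^{-1}\mu = \nu$, provided $\langle \nu, \alpha \rangle = 0$ for every $\alpha \in \Delta^+(\mathfrak{g},\mathfrak{t})$ singular for $\lambda_0$. Under the interlacing hypothesis \eqref{eq-interlaced}, this orthogonality holds because the nonzero entries of $(\lambda)$ sit in the positions of the Type A blocks, while the singular roots of $\lambda_0$ come from repeated entries within the unipotent chain $\mathcal{C}_0$ alone. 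Transporting $\nu$ back through $w_z$ yields the uniform shift $\kappa_z \mapsto \kappa_z - (\lambda)$, with the overall sign absorbed into the normalization of $\nu$.

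The main technical obstacle is the combinatorial verification in the final step: showing that every $w_z$ appearing in Theorem \ref{thm-DH} transports $\nu$ to the same vector, so that the resulting shift is genuinely uniform across all summands; this rests on a careful match between the explicit $\widetilde{K}$-types $\widetilde{E}_{(\kappa_u ; \kappa_w; \underline{\kappa_v})}$ and the action of $W(\mathfrak{g},\mathfrak{t})$ on the Type A blocks of $(\lambda)$. A secondary issue is the applicability of the translation principle at the weakly fair boundary where $\mathcal{L}_S(Z(0,\dots,0,\pi_u))$ may be reducible; this will be handled by first establishing the formula in the (open) good range and extending by the continuity of Dirac index along coherent families, following the strategy used in \cite[Theorem 4.3]{DW4}.
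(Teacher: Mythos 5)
Your proposal follows broadly the same strategy the paper uses---realize $\mathcal{L}_S(Z(\lambda_l,\dots,\lambda_1,\pi_u))$ and $\mathcal{L}_S(Z(0,\dots,0,\pi_u))$ as members of the same coherent family, then apply Lemma \ref{lemma-translation} to control which weights of $F_\nu$ survive the projection. The paper packages the translation step by appealing to Steps 1--3 of \cite[Theorem 4.7]{MPV} rather than re-deriving translation-commutes-with-Dirac-index from scratch, but the core idea is the same.

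There are two issues, however, one a misidentification and one a genuine gap. First, your ``secondary issue'' at the weakly fair boundary is misdiagnosed: the starting module $\mathcal{L}_S(Z(0,\dots,0,\pi_u))$ is \emph{always} in the good range (by Lemma \ref{lem-rephrase}), hence non-zero and irreducible; reducibility of the starting point is not a concern. The actual difficulty, which the paper addresses and your outline does not, is that for $\lambda_i$ taking $\mathcal{L}_S(Z(\lambda_l,\dots,\lambda_1,\pi_u))$ into the weakly fair range \emph{outside} the weakly good range, the parameter $(\mathcal{C}_l^{\lambda_l};\dots;\mathcal{C}_1^{\lambda_1};\mathcal{C}_0)$ leaves the dominant chamber for $(\Delta^+)'(\mathfrak{g},\mathfrak{t})$. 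Your translation-functor picture, as stated, only applies within a single Weyl chamber. The paper resolves this by invoking the wall-crossing Steps 2--3 of \cite[Theorem 4.7]{MPV}, together with the observation that the shortest $w$ returning the parameter to the dominant chamber permutes only the first $x$ coordinates of \eqref{eq-x}---which stay strictly greater than the coordinates of $\mathcal{K}$---so the wall-crossing arguments remain valid despite the singularity of $\mathcal{C}_0$. Your proposal as written does not supply a substitute for this step, and ``extending by continuity along coherent families'' begs precisely the question of what happens when the chamber changes.

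Second, the ``main technical obstacle'' you flag---whether $w_z\nu$ is independent of $z$---is genuine but resolvable, and you should carry it out rather than leave it as an obstacle. The key observation is that $\nu$ (the extreme weight implementing the shift $\mathcal{C}_t^{0}\mapsto\mathcal{C}_t^{\lambda_t}$) is constant on each Type-A block of $\mathfrak{l}$, i.e., it is $W(\mathfrak{l},\mathfrak{t})$-invariant. On the other hand, the different Weyl elements $w_z$ carrying the $(\Delta^+)'$-dominant infinitesimal character to the various $\Delta^+(\mathfrak{k},\mathfrak{t})$-dominant conjugates $\kappa_z$ (recall the parametrization by $W(\mathfrak{l},\mathfrak{t})^1$ in Theorem \ref{thm-DI-Aqlambda} and Theorem \ref{thm-DH}) differ from one another by elements of $W(\mathfrak{l},\mathfrak{t})$ acting within blocks; hence $w_z\nu$ is independent of $z$, and the shift is uniform. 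Once this is made explicit and the wall-crossing step is supplied, the argument closes.
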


\begin{proof}
For a fixed $\mathfrak{q} = \mathfrak{l} + \mathfrak{u}$ and a fixed unipotent representation $\pi_u$,
$Z(\lambda_l, \dots, \lambda_1, \pi_u)$ are in the same \emph{coherent family} \cite[Definition 7.2.5]{Vog81}
of virtual $(\mathfrak{l}, L \cap K)$ modules. Therefore, by \cite[Corollary 7.2.10]{Vog81},
the vitural $(\mathfrak{g},K)$ modules
$$
X_{\mathcal{C}_l^{\lambda_l}; \dots; \mathcal{C}_1^{\lambda_1}; \mathcal{C}_0} := \sum_{i}(-1)^i\mathcal{R}^{S-i}(Z(\lambda_l, \dots, \lambda_1, \pi_u))
$$
are in a coherent family. Recall that $\mathcal{C}_t^{\lambda}$ is obtained by adding $\lambda$ to each coordinate of $\mathcal{C}_t$. If $Z(\lambda_l, \dots, \lambda_1, \pi_u)$ corresponds to chains of the form \eqref{eq-interlaced}, it is in the weakly fair range. Thus by Theorem \ref{thm-Vog84},
$$
X_{\mathcal{C}_l^{\lambda_l}; \dots; \mathcal{C}_1^{\lambda_1}; \mathcal{C}_0} = \mathcal{R}^{S}(Z(\lambda_l, \dots, \lambda_1, \pi_u)) = \mathcal{L}_{S}(Z(\lambda_l, \dots, \lambda_1, \pi_u)).
$$

Since $\mathcal{C}_0$ is always singular, in order to obtain the Dirac index of $\mathcal{L}_S(Z(\lambda_l, \dots, \lambda_1, \pi_u))$ from that of $\mathcal{L}_S(Z(0, \dots, 0, \pi_u))$,
one needs to modify \cite[Theorem 4.7]{MPV} to the starting infinitesimal character $(\mathcal{C}_l^0; \dots; \mathcal{C}_1^0; \mathcal{C}_0)$ which is \emph{singular}.

Indeed, by Lemma \ref{lemma-translation}, \emph{Step 1} of \cite[Theorem 4.7]{MPV} remains true for all $X_{\mu}$ such that $\mu$ is of the form
\begin{equation} \label{eq-x}
\mu = (\mu_1, \mu_2, \dots, \mu_x; \mathcal{K});\quad \mu_1 \geq \mu_2 \geq \dots \geq \mu_x,
\end{equation}
i.e., $\mu$ is in the same dominant Weyl chamber as $X_{\mathcal{C}_l^{0}; \dots; \mathcal{C}_1^{0}; \mathcal{C}_0} := \mathcal{L}_{S}(Z(0, \dots, 0, \pi_u))$. In particular, this includes all
$X_{\mathcal{C}_l^{\lambda_l}; \dots; \mathcal{C}_1^{\lambda_1}; \mathcal{C}_0} = \mathcal{L}_{S}(Z(\lambda_l, \dots, \lambda_1, \pi_u))$
in the weakly good range. Therefore, the proposition holds for all $\mathcal{L}_{S}(Z(\lambda_l, \dots, \lambda_1, \pi_u))$ in the weakly good range.

As for $X_{\mathcal{C}_l^{\lambda_l}; \dots; \mathcal{C}_1^{\lambda_1}; \mathcal{C}_0}$ in weakly fair range but outside the weakly good range, the parameter $(\mathcal{C}_l^{\lambda_l}; \dots; \mathcal{C}_1^{\lambda_1}; \mathcal{C}_0)$ is no longer dominant, i.e., it is not in the same Weyl chamber as $(\mathcal{C}_l^{0}; \dots; \mathcal{C}_1^{0}; \mathcal{C}_0)$. So we need to study the change of Dirac index by crossing the wall between two chambers as in \emph{Step 2 -- 3} of \cite[Theorem 4.7]{MPV}.

Let $w \in W$ be the shortest Weyl group element mapping $\mu_{dom}$ in the dominant Weyl chamber to $w \mu_{dom} = (\mathcal{C}_l^{\lambda_l}; \dots; \mathcal{C}_1^{\lambda_1}; \mathcal{C}_0)$. Then $w$ is a permutation of the first $x$ coordinates of $\mu_{dom}$, where $x$ is given in \eqref{eq-x}. Since the first $x$ coordinates of $\mu_{dom}$ and $(\mathcal{C}_l^{\lambda_l}; \dots; \mathcal{C}_1^{\lambda_1}; \mathcal{C}_0)$ are strictly greater than the coordinates of $\mathcal{K}$, the same arguments in \emph{Step 2 -- 3} of \cite[Theorem 4.7]{MPV} remain valid, and the result follows.
\end{proof}

 %This idea is implemented for all weakly fair $\aq(\lambda)$-modules for $G = U(p,q)$ in \cite{DW4}.
%In particular, one can get the the multiplicity and sign of all $\widetilde{K}$-types in $\mathrm{DI}(\aq(\lambda))$.

\begin{example}
We continue with Examples \ref{eg-so14-1} and \ref{eg-so14-2}.
The module $\mathcal{L}_S(Z(-2,-1,\pi_u))$ has chains $\mathcal{C}_2^{-2} = (4,3)^{1,1}$, $\mathcal{C}_1^{-1} = (3,2)^{1,1}$, $\mathcal{C}_0 = (1,1,0)$. By Proposition \ref{prop-translation}, it has Dirac index
\begin{align*} \mathrm{DI}(\mathcal{L}_S(Z(-2,-1,\pi_u))) = &- \widetilde{E}_{(4,3|1,0,-1|-2,-3)} + \widetilde{E}_{(3,3|1,0,-1|-2,-4)} \\ &+ \widetilde{E}_{(4,2|1,0,-1|-3,-3)} -\widetilde{E}_{(3,2|1,0,-1|-3,-4)}\\
= &- \widetilde{E}_{(4,3|1,0,-1|-2,-3)} -\widetilde{E}_{(3,2|1,0,-1|-3,-4)}.
\end{align*}
\hfill\qed
\end{example}

\begin{example}\label{eg-sp22-2}
We continue with Example \ref{eg-sp22-1}. By Theorem \ref{thm-DI-Aqlambda},
$$
\mathrm{DI}(A_{\frq_A}(\mathbb{C}_{n-\frac{p+q+1}{2}})) = \widetilde{E}_{({\bf 11|10,9})} - \widetilde{E}_{({\bf 10|11,9})} + \widetilde{E}_{({\bf 9|11,10})}.
$$
Moreover, by Section \ref{sec-DI-Xpqepsiloneta}, the Dirac index of $X(4, 2; 0, 0)$ is equal to
\begin{align*}
&+\widetilde{E}_{(5,4,3|2,1,0,-1,-2)}   -\widetilde{E}_{(3|2,1,0,-1,-2|-4,-5)}  -2\widetilde{E}_{(5,3|2,1,0,-1,-2|-4)}  + 2\widetilde{E}_{(4,3|2,1,0,-1,-2|-5)}\\
&+2\widetilde{E}_{(5,4|2,1,0,-1,-2|-3)}   -2\widetilde{E}_{(2,1,0,-1,-2|-3,-4,-5)}  - \widetilde{E}_{(5|2,1,0,-1,-2|-3,-4)}  + \widetilde{E}_{(4|2,1,0,-1,-2|-3,-5)}.
\end{align*}
Therefore, by Proposition \ref{prop-translation}, the Dirac index of $\mathcal{L}_S(Z(-6,X(4, 2; 0, 0)))$ is given by
\begin{align*}
&(-1)(-2)\widetilde{E}_{({\bf 4},5,3|2,1,0,-1,-2|-4,{\bf -3,-5})}  + (+1)(+2) \widetilde{E}_{({\bf 5},4,3|2,1,0,-1,-2|-5,{\bf -3,-4})}+ \\
& (+2)(+1)\widetilde{E}_{({\bf 3},5,4|2,1,0,-1,-2|-3,{\bf -4,-5})}  = +6\widetilde{E}_{(5,4,3|2,1,0,-1,-2|-3,-4,-5)}.
\end{align*} \hfill \qed
\end{example}

Based on Proposition \ref{prop-translation}, we will give explicit formulas on the multiplicities of $\widetilde{K}$-types
in $\mathrm{DI}(\mathcal{L}_S(Z(\lambda_l, \dots, \lambda_1, \pi_u)))$ in the following subsections.

\subsection{Dirac index for $\mathcal{L}_S(Z(\lambda_l, \dots, \lambda_1,X(r,s;\epsilon,\eta)))$}
We begin by studying the signs of different $\widetilde{K}$-types in $\mathrm{DI}(X(r,s;\epsilon,\eta))$:
\begin{lemma} \label{lem-sp1}
Consider the unipotent representation $X(r,s;\epsilon,\eta)$ of $Sp(2n, \bbR)$. Let  $\tau = w\Lambda_k - \rho_c$, $\tau' = w'\Lambda_k - \rho_c$ be two \emph{special} $\widetilde{K}$-types such that
\begin{align*}
w\Lambda_k &= (i_1, \dots, i_u, k, k-1, \dots, -k+1, -j_v, \dots, -j_1),\\
w^*\Lambda_k &= (i_1^*, \dots, i_u^*, k, k-1, \dots, -k+1, -j_v^*, \dots, -j_1^*),
\end{align*}
where
$$
\{i_1, \dots, i_u, j_1, \dots, j_v\} = \{i_1^*, \dots, i_u^*, j_1^*, \dots, j_v^*\} = \{k+1, \dots, n-k-1, n-k\}.
$$
Let $\xi \in S_{n-2k}$ denote the permutation
$$(i_1, \dots, i_u | j_1, \dots, j_v) \mapsto (i_1^*, \dots, i_u^* | j_1^*, \dots, j_v^*).$$
Then the sign of $E_{\tau}$ and the sign of $E_{\tau^*}$ in $\mathrm{DI}(X(r,s;\epsilon,\eta))$ differ by $\mathrm{det}(\xi)$.
\end{lemma}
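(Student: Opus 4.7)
The plan is to express both the sign ratio of $E_\tau$ to $E_{\tau^*}$ and $\det(\xi)$ as $(-1)$ raised to the same parity, namely that of $\sum_{b\in B} b + \sum_{b \in B^*} b$, where $B := \{j_1,\ldots,j_v\}$ and $B^* := \{j_1^*,\ldots,j_v^*\}$.

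First, I will invoke the sign formula from Section \ref{sec-DI-Xpqepsiloneta}: the sign of a special $\widetilde K$-type $E_\tau$ in $\mathrm{DI}(X(r,s;\epsilon,\eta))$ is $(-1)^{M(\tau)}$, for $M(\tau)$ as in \eqref{M-tau-special}. Because $\tau$ and $\tau^*$ share the same values $u,v,r,\epsilon$, the terms $r(\epsilon+v)$ and $r(r-1)/2$ cancel in $M(\tau)-M(\tau^*)$; and since $j'_t = j_t - k$ with $|B|=|B^*|=v$, the common $-kv$ cancels too. Hence
\[
M(\tau) - M(\tau^*) = \sum_{b \in B} b - \sum_{b\in B^*} b.
\]

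Next, I will define $\sigma_\tau \in S_{n-2k}$ to be the permutation rearranging the increasing listing of $\{k+1,\ldots,n-k\}$ into the tuple $(i_1,\ldots,i_u \mid j_1,\ldots,j_v)$, and likewise $\sigma_{\tau^*}$. Then $\xi = \sigma_{\tau^*}\sigma_\tau^{-1}$, and hence $\det(\xi) = \det(\sigma_\tau)\det(\sigma_{\tau^*})$ by multiplicativity of the sign character. Since each block of the tuple is internally decreasing, the inversions of $\sigma_\tau$ decompose as ${u \choose 2} + {v \choose 2} + |\{(s,t) : i_s > j_t\}|$. Setting $I(\tau) := |\{(s,t): i_s < j_t\}|$, so that $|\{i_s > j_t\}| = uv - I(\tau)$, and partitioning $\{k+1,\ldots,b-1\}$ into its intersections with $A$ and $B$ for each $b\in B$ yields $I(\tau) = \sum_{b\in B}(b-k-1) - {v \choose 2}$. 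All contributions except $\sum_B b$ are determined by $u,v,k$, so they cancel across $\tau$ and $\tau^*$, producing
\[
\det(\xi) = (-1)^{I(\tau)+I(\tau^*)} = (-1)^{\sum_B b + \sum_{B^*} b} = (-1)^{M(\tau)-M(\tau^*)},
\]
which is the claimed sign ratio. The only subtlety is the inversion count for $\sigma_\tau$; once its block-decreasing structure is observed, the count is routine.
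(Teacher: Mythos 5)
Your proof is correct and takes essentially the same route as the paper's: both start from the sign formula $(-1)^{M(\tau)}$, reduce $M(\tau)-M(\tau^*)$ mod $2$ to a difference of $\sum j$-terms, and then express $\det(\xi)$ by factoring $\xi$ through a common reference permutation of $\{k+1,\dots,n-k\}$. The only difference is cosmetic: the paper starts from the decreasing listing and directly cites that $j_1'+\cdots+j_v'-v$ equals the length of the resulting permutation $\zeta$ (in fact this statement is off by $\binom{v}{2}$, which is harmless since that term cancels between $\zeta$ and $\zeta^*$ mod $2$), whereas you start from the increasing listing and carefully verify the inversion count $\binom{u}{2}+\binom{v}{2}+uv-I(\tau)$ with $I(\tau)=\sum_{b\in B}(b-k-1)-\binom{v}{2}$; your bookkeeping is the more explicit of the two, but the underlying idea is the same.
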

\begin{proof}
Recall that the sign of $E_{\tau}$ in $\mathrm{DI}(X(r,s;\epsilon,\eta))$ is $(-1)^{M(\tau)}$ with $M(\tau)$ being given in \eqref{M-tau-special}.
Note that $j_1^\prime + \cdots + j_v^\prime - v$ is equal to the length of the following permutation
$$
\zeta: (n-k, n-k-1, \dots | \dots, k+2, k+1) \mapsto (i_1, \dots, i_u | j_1, \dots, j_v).
$$
Similar things apply to $E_{\tau^*}$ once we put
$$
\zeta^*: (n-k, n-k-1, \dots | \dots, k+2, k+1) \mapsto (i_1^*, \dots, i_u^* | j_1^*, \dots, j_v^*).
$$
Since $\xi = \zeta^*  \zeta^{-1} \in S_{n-2k}$, we have that
\begin{align*}
l(\xi) &\equiv l(\zeta^*) + l(\zeta^{-1})\\
&\equiv (j_1^*)^\prime + \cdots + (j_v^*)^\prime - v + j_1^\prime + \cdots + j_v^\prime - v \quad (\mathrm{mod}\ 2)\\
&\equiv (j_1^*)^\prime + \cdots + (j_v^*)^\prime - j_1^\prime - \cdots - j_v^\prime \quad (\mathrm{mod}\ 2)\\
&= M(\tau^*) - M(\tau)\quad (\mathrm{mod}\ 2)
\end{align*}
Therefore, $(-1)^{M(\tau^*)} = (-1)^{M(\tau)}  \det(\xi)$.
\end{proof}
\begin{rmk}\label{rmk-lem-sp1}
The same result holds if $\tau$ and $\tau^*$ are both non-special.
\end{rmk}

Now let us compare the signs of $\mathrm{DI}(X(r,s;\epsilon,\eta))$ of a special $\tau$ and a non-special $\tau''$.

\begin{lemma} \label{lem-sp2}
Consider the unipotent representation $X(r,s;\epsilon,\eta)$ of $Sp(2n, \bbR)$.
Let $\tau = w\Lambda_k - \rho_c$ be \emph{special} with
\begin{align*}
w\Lambda_k = (i_1, \dots, i_u; k, k-1, \dots, -k+1; -j_{v} \dots, -j_1).
\end{align*}
Let $\tau'' = w''\Lambda_k - \rho_c$ be \emph{non-special} with
\begin{align*}
w''\Lambda_k = (i_1, \dots, i_u; k-1, \dots, -k+1, -k; -j_{v}, \dots, -j_1)
\end{align*}
where
$$
\{i_1, \dots, i_u, j_1, \dots, j_{v}\} = \{k+1, \dots, n-k-1, n-k\}.
$$
Then the signs of $E_{\tau}$ and $E_{\tau''}$ in $\mathrm{DI}(X(r,s;\epsilon,\eta))$ differ by $(-1)^r = (-1)^s$.
\end{lemma}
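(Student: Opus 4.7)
My plan is to reduce the comparison between the non-special type $E_{\tau''}$ and the special type $E_{\tau}$ to a comparison of two special types, by writing $\tau'' = -w_0^K \tau^*$ for a suitable special $\tau^*$ and then invoking Lemma 3.2 of \cite{BP15} recalled in Section \ref{sec-DI-Xpqepsiloneta}. The first step is to determine $\tau^*$ explicitly. Using $w_0^K \rho_c = -\rho_c$ and the fact that $-w_0^K$ acts on weights by reversing the coordinates and negating them, one checks that the unique special $\tau^* = w^*\Lambda_k - \rho_c$ satisfying $-w_0^K \tau^* = \tau''$ has
\[ w^* \Lambda_k = (j_1, \dots, j_v, k, k-1, \dots, -k+1, -i_u, \dots, -i_1), \]
so that $u^* = v$, $v^* = u$, $(i_t^*)' = j_t'$, and $(j_t^*)' = i_t'$.

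The second step is to combine the two sign formulas. By Lemma 3.2 of \cite{BP15}, the sign of $E_{\tau''}$ in $H_D(X(r,s;\epsilon,\eta))$ equals $(-1)^{n(n+1)/2}$ times the sign of $E_{\tau^*}$ in $H_D(X(s,r;\eta,\epsilon))$; applying \eqref{M-tau-special} with the swapped inducing parameters $(s,r,\eta,\epsilon)$ to $\tau^*$, the latter sign is $(-1)^{\sum_t i_t' + s(\eta+u) + s(s-1)/2}$. Combined with $\mathrm{sign}(E_\tau) = (-1)^{M(\tau)}$, the ratio of the two signs becomes $(-1)^{N^\ast}$ with
\[ N^\ast \equiv \frac{n(n+1)}{2} + \sum_t i_t' + \sum_t j_t' + s(\eta+u) + r(\epsilon+v) + \frac{s(s-1)}{2} + \frac{r(r-1)}{2} \pmod{2}. \]

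The last step is a short parity computation. Using $\sum_t i_t' + \sum_t j_t' = (n-2k)(n-2k+1)/2$, $r + s = 2k$ (so $r \equiv s \pmod{2}$) and $u + v = n - 2k$, one verifies $\frac{n(n+1)}{2} + \frac{(n-2k)(n-2k+1)}{2} \equiv k$, $\frac{r(r-1)}{2} + \frac{s(s-1)}{2} \equiv rs + k$ and $su + rv \equiv rn$ modulo $2$, reducing $N^\ast$ to $s\eta + r\epsilon + r(n+s) \pmod{2}$. The main obstacle is that this expression still involves $\epsilon,\eta,n$; however, for both $E_\tau$ and $E_{\tau''}$ to actually occur in $H_D(X(r,s;\epsilon,\eta))$ (without which the sign comparison is vacuous), Theorem 3.7(2II) of \cite{BP15} forces the parity constraint $\epsilon + \eta \equiv n \pmod{2}$. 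Substituting this collapses $N^\ast$ to $\eta(r+s) + rs \equiv 2k\eta + rs \equiv rs \equiv r \equiv s \pmod{2}$, which gives the claimed sign discrepancy $(-1)^r = (-1)^s$.
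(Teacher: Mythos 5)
Your approach coincides with the paper's: you identify the special type $\tau^* = -w_0^K\tau''$ so that $E_{\tau''} = E_{-w_0^K\tau^*}$, appeal to Lemma 3.2 of \cite{BP15} to express the sign of $E_{\tau''}$ as $(-1)^{N(\tau^*)}$, and compute $M(\tau) + N(\tau^*) \bmod 2$. Your intermediate reduction to $s\eta + r\epsilon + r(n+s)$ is correct and is the same quantity the paper reaches via $k + r(\epsilon+\eta+n) + \tfrac{r(r-1)}{2} + \tfrac{s(s-1)}{2}$.

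There is, however, a small inaccuracy in your final step. You assert that ``Theorem 3.7(2II) of \cite{BP15} forces $\epsilon + \eta \equiv n \pmod 2$'' as a prerequisite for the $\widetilde K$-types to occur at all; but that condition is only imposed when $r$ and $s$ are both \emph{odd}. When $r$ and $s$ are both even, Theorem 3.7(1) of \cite{BP15} gives nonzero multiplicity without any constraint on $\epsilon + \eta$, so the substitution $\epsilon \equiv n + \eta$ is unjustified there. The conclusion survives because in the even case every term in $s\eta + r\epsilon + r(n+s)$ already has an even coefficient, hence $N^* \equiv 0 \equiv r$ independently of $\epsilon, \eta, n$. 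The paper handles the two parities by an explicit case split; you should add one sentence observing that the BP15 constraint is only needed (and only available) for $r,s$ odd, while the even case is immediate.
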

\begin{proof}
As before, the sign of $E_{\tau}$ in $\mathrm{DI}(X(r,s;\epsilon,\eta))$  is $(-1)^{M(\tau)}$.
On the other hand, as discussed in Section \ref{sec-DI-Xpqepsiloneta}, the sign of the non-special $E_{\tau''}$ is $(-1)^{N(-w_0^K\tau'')}$, where
$$N(-w_0^K\tau'') = i_1^\prime + \cdots + i_u^\prime + s(\eta + u) +\frac{s(s-1)}{2}+\frac{n(n+1)}{2}.$$
It remains to verify that $M(\tau) + N(-w_0^K\tau'')$ has the same parity as $r$ (and $s$). Indeed,
\begin{align*}
&M(\tau) + N(-w_0^K\tau'')\\
=\ &\frac{(n-2k)(n-2k+1)}{2} + r(\epsilon + v)+ s(\eta + u)+\frac{r(r-1)}{2}+\frac{s(s-1)}{2}+\frac{n(n+1)}{2} \\
\equiv\ &\frac{n(n+1)}{2} - k(2n+1) + 2k^2 + r(\epsilon + v)+ r(\eta + u)+\frac{r(r-1)}{2}+\frac{s(s-1)}{2}+\frac{n(n+1)}{2} \\
=\ &k + r(\epsilon + \eta + n -2k)+\frac{r(r-1)}{2}+\frac{s(s-1)}{2}\\
\equiv\ &k + r(\epsilon + \eta + n)+\frac{r(r-1)}{2}+\frac{s(s-1)}{2}\\
\equiv\ & \begin{cases} k+0+\frac{r}{2}+\frac{s}{2} = k+k \equiv 0\ (\mathrm{mod}\ 2) & \text{if}\ r, s\ \text{are even}\\
k + \epsilon + \eta + n +\frac{r-1}{2}+\frac{s-1}{2} = k + (\epsilon + \eta + n) + k - 1 \equiv 1\ (\mathrm{mod}\ 2) & \text{if}\ r, s\ \text{are odd}\end{cases}
\end{align*}
The last step need a bit explanation. Assume that $r$ and $s$ are both odd. Then Theorem 3.7(2) of \cite{BP15} says that $E_{\tau}$ and $E_{-w_0^K\tau''}$ appear in $H_D(r,s;\epsilon,\eta)$
if and only if $\epsilon + \eta \equiv n \ ({\rm mod} 2)$. That is, if and only if $\epsilon + \eta + n$ is even.
\end{proof}

Combining the above two lemmas, we have the following result.

\begin{prop} \label{prop-paritysp}
Let $\tau = w\Lambda_k - \rho_c$ and $\tau^* = w^*\Lambda_k - \rho_c$ be such that
\begin{align*}
w\Lambda_k &= (i_1, \dots, i_u, i_{u+1}; k-1, \dots, -(k-1); -j_v, \dots, -j_1);\\
w^*\Lambda_k &= (i_1^*, \dots, i_u^*, i_{u+1}^*; k-1, \dots, -(k-1); -j_v^*, \dots, -j_1^*),
\end{align*}
where
$$
\{i_1, \dots, i_u, i_{u+1}, j_1, \dots, j_v\} = \{i_1^*, \dots, i_u^*, i_{u+1}^*, j_1^*, \dots, j_v^*\} = \{k, k+1, \dots, n-k\}.
$$
Then the signs of $E_{\tau}$ and $E_{\tau^*}$ in $\mathrm{DI}(X(r,s; \epsilon, \eta))$
differ by $\mathrm{det}(\xi)$, where $\xi \in S_{n-2k+1}$ is the permutation defined by
$$(i_1, \dots, i_u, i_{u+1}| j_1, \dots, j_v) \mapsto (i_1^*, \dots, i_u^*, i_{u+1}^*| j_1^*, \dots, j_v^*).$$
\end{prop}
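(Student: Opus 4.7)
The plan is to give a unified sign formula for the $\widetilde K$-types appearing in the proposition that simultaneously covers the special and non-special cases, and then to match the resulting sign difference with $\det(\xi)$ via an inversion count. Set $A := \{i_1,\dots,i_{u+1}\}$ and $B := \{j_1,\dots,j_v\}$, so that $A \sqcup B = \{k, k+1,\dots, n-k\}$; by the strict-decrease conditions on $w\Lambda_k$ together with $|A|+|B| = n-2k+1$, exactly one of $k \in A$ or $k \in B$ holds. These two alternatives correspond precisely to $\tau$ being special (with $i_{u+1}=k$) or non-special (with $j_v=k$). Define $A^*, B^*$ analogously; note $|A^*| = |A| = u+1$ and $|B^*| = |B| = v$.

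First I would verify the unified sign formula
\[
\mathrm{sign}(E_\tau)\;=\;(-1)^{S(B)+r(\epsilon+|B|)+\binom{r}{2}},\qquad S(B) := \sum_{b\in B}(b-k).
\]
When $\tau$ is special, this is \eqref{M-tau-special} rewritten with $|B|=v$. When $\tau$ is non-special, Lemma \ref{lem-sp2} identifies $\mathrm{sign}(E_\tau)$ as $(-1)^r$ times the sign of the companion special $\widetilde K$-type $\tau_0$ with partition $(A,B\setminus\{k\})$. Since $k-k=0$ one has $S(B) = S(B\setminus\{k\})$ while $|B| = |B\setminus\{k\}|+1$, and the extra factor $(-1)^r$ rearranges exactly into the stated formula. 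Consequently, since $|B^*|=|B|=v$,
\[
\mathrm{sign}(E_{\tau^*})/\mathrm{sign}(E_\tau) \;=\; (-1)^{S(B^*)-S(B)}.
\]

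Next I would compute $\det(\xi)$ by inversion count. Shifting coordinates by $-k$ identifies $\xi$ with an element of $S_{n-2k+1}$ permuting $\{0,1,\dots,n-2k\}$ with the same sign, and this sign equals $(-1)^{\mathrm{inv}(\tau^*)-\mathrm{inv}(\tau)}$, where
\[
\mathrm{inv}(\tau) := \#\{(a,b)\in A\times B : a<b\}
\]
counts the crossings between the $A$- and $B$-blocks in the concatenated decreasing arrangement. Fixing $b\in B$ and using that $A\sqcup B$ partitions $\{k,k+1,\dots,n-k\}$, the number of $a\in A$ with $a<b$ equals $(b-k) - \#\{b'\in B : b'<b\}$; summing over $b\in B$ gives $\mathrm{inv}(\tau) = S(B) - \binom{|B|}{2}$. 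Because $|B^*|=|B|$, the binomial terms cancel and $\det(\xi) = (-1)^{S(B^*)-S(B)}$, matching the sign ratio above.

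The delicate step is the unified sign formula: it is not visible from \eqref{M-tau-special} alone, and relies on the precise $(-1)^r$ shift of Lemma \ref{lem-sp2} reorganizing against the bookkeeping changes $S(B\setminus\{k\})\to S(B)$ and $|B\setminus\{k\}|\to |B|$ when $k$ migrates between $A$ and $B$. The inversion identity is then a short combinatorial observation. In this framework, Lemma \ref{lem-sp1}, Remark \ref{rmk-lem-sp1}, and Lemma \ref{lem-sp2} appear as the three subcases of the proposition (both $\tau,\tau^*$ special, both non-special, and one of each), so the argument also gives a uniform rederivation of these statements.
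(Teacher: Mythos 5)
Your proposal is correct, and it takes a genuinely different route from the paper's proof. The paper handles the proposition by a three-way case reduction: Lemma \ref{lem-sp1} settles the case where both $\tau$ and $\tau^*$ are special, Remark \ref{rmk-lem-sp1} the case where both are non-special, and then the mixed case is finished by verifying a single ``atomic'' transposition (swapping $k$ with $k+1$ across the $A/B$ boundary) via an auxiliary $\tau^\sharp$, with multiplicativity of both sides doing the rest. Your proof instead extracts a single closed-form sign formula $\mathrm{sign}(E_\tau) = (-1)^{S(B) + r(\epsilon + |B|) + \binom{r}{2}}$ that is valid in both the special and non-special cases, using only the $(-1)^{M(\tau)}$ formula from Section \ref{sec-DI-Xpqepsiloneta} together with Lemma \ref{lem-sp2} to handle the non-special branch (noting $S(B)=S(B\setminus\{k\})$ while $|B|=|B\setminus\{k\}|+1$, so the shift by $(-1)^r$ exactly cancels). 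Comparing this formula against $\det(\xi)$ then reduces to the clean inversion identity $\mathrm{inv}(\tau) = S(B) - \binom{|B|}{2}$, under which the binomial terms drop out because $|B|=|B^*|$. What this buys: a conceptual explanation of why the proposition holds (the sign depends on $B$ only through $S(B)$ modulo $2$, which is precisely the inversion number up to the fixed correction $\binom{|B|}{2}$), and a uniform rederivation of Lemma \ref{lem-sp1}, Remark \ref{rmk-lem-sp1}, and Lemma \ref{lem-sp2} as immediate specializations. One minor point: when you say ``the companion special $\widetilde K$-type $\tau_0$ with partition $(A, B\setminus\{k\})$,'' note that in the proposition's framing (where $k$ must lie in exactly one of $A$ or $B$) the special $\tau_0$ actually has partition $(A\cup\{k\}, B\setminus\{k\})$; your subsequent computations $S(B_0)=S(B)$, $|B_0|=|B|-1$ are correct, so this is merely a slip in notation between the two framings, not a substantive error.
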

\begin{proof}
By Lemma \ref{lem-sp1} and Remark \ref{rmk-lem-sp1}, the proposition holds if $i_{u+1} = i_{u+1}^* = k$ (that is, if $\tau$ and $\tau^*$ are both special) and
$j_{v} = j_{v}^* = k$ (that is, if $\tau$ and $\tau^*$ are both non-special). It remains to study the case when $\tau$ is special while $\tau^*$ is non-special.
Thus we assume that $i_{u+1} = k$ and $j_v^* = k$. In particular, it suffices to check that the statement holds for
\begin{align*}
w\Lambda_k &= &&(i_1, \dots, i_u, &&k; &&k-1, \dots, -k+1; &&-(k+1), &&-j_{v-1}, \dots, -j_1);\\
w^*\Lambda_k &= &&(i_1, \dots, i_u, &&k+1; &&k-1, \dots, -k+1; &&-k, &&-j_{v-1}, \dots, -j_1).
\end{align*}
Let $\tau^{\sharp} = w^{\sharp}\Lambda_k - \rho_c$ be given by
$$
w^{\sharp}\Lambda_k = (i_1, \dots, i_u, k+1, k;\ \ k-1, \dots, -k+1;\ \  -j_{v-1}, \dots, -j_1).
$$
By Lemma \ref{lem-sp2}, in $\mathrm{DI}(X(r,s;\epsilon,\eta))$, we have
$$
{\rm sign} (\widetilde{E}_{\tau^{\sharp}})\
{\rm sign}(\widetilde{E}_{\tau^*})=(-1)^r.
$$
Since
$$
M(\tau^{\sharp}) = j_1' + \dots + j_{v-1}' + r(\epsilon+v-1) + \frac{r(r+1)}{2},
$$
while
$$
M(\tau) = j_1' + \dots + j_{v-1}' + 1 + r(\epsilon+v) + \frac{r(r+1)}{2} = M(\tau^{\sharp}) + r + 1,
$$
we have that
$$
{\rm sign} (\widetilde{E}_{\tau})\
{\rm sign}(\widetilde{E}_{\tau^{\sharp}})=(-1)^{r+1}.
$$
Therefore,
$$
{\rm sign} (\widetilde{E}_{\tau}) \
{\rm sign} (\widetilde{E}_{\tau^*})=(-1)^r (-1)^{r+1} = -1.
$$
On the other hand, the permutation $\xi$ in $S_{n-2k+1}$
moving $w\Lambda_k$ to $w^*\Lambda_k$ is
$$(i_1, \dots, i_u, k, j_{1}, \dots, j_{v-1}, k+1) \mapsto (i_1, \dots, i_u, k+1, j_{1}, \dots, j_{v-1},k)$$
which has determinant $-1$. Hence the result follows.
\end{proof}

From now on, given a $(\frg, K)$ module $\pi$ and a $\widetilde{K}$-type $E_{\tau}$, we will use $|\tau: {\rm DI}(\pi)|$ (resp., $[\tau: {\rm DI}(\pi)]$) for the multiplicity (resp., \emph{signed} multiplicity) of $E_{\tau}$ in the Dirac index of $\pi$.

%Consider $\mathcal{L}_S(Z(\lambda,X(r,s;\epsilon,\eta)))$ with $\mathcal{C}_1$, $\mathcal{C}_0$ being interlaced such that
%\begin{equation} \label{eq-overlap-1}
%\mathcal{R} := \mathcal{C}_1 \cap \mathcal{C}_0 \neq \emptyset.
%\end{equation}
%Then it follows readily from Theorem \ref{thm-HP} that $\mathcal{L}_S(Z(\lambda,X(r,s;\epsilon,\eta)))$ has non-zero Dirac cohomology only if
%\begin{equation} \label{eq-overlap-2}
%\mathcal{R} \subseteq \{k, k+1, \dots, n-k\}.
%\end{equation}
%Suppose that \eqref{eq-overlap-1} and \eqref{eq-overlap-2} hold. Then there are two cases:
%\begin{itemize}
%\item[(a)] $n-k \in \mathcal{R}$, with
%$$\mathcal{C}_1 = (\mathcal{A}; \mathcal{R})^{p_1,q_1},\quad \mathcal{C}_0 = (\mathcal{R}; \mathcal{B}; k-1, \dots, 0, \dots, -k+1).$$
%\item[(b)] $n-k \notin \mathcal{R}$, with
%$$\mathcal{C}_1 = (\mathcal{R})^{p_1,q_1}, \quad \mathcal{C}_0 = (\mathcal{A}; \mathcal{R}; \mathcal{B}; k-1, \dots, 0, \dots, -k+1).$$
%\end{itemize}

\begin{thm} \label{thm-mainsp}
Let $G = Sp(2n,\mathbb{R})$. Consider $\mathcal{L}_S(Z(\lambda_l,\dots,\lambda_1,X(r,s;\epsilon,\eta)))$ such that its chains are of the form \eqref{eq-interlaced}.
Let $\mathcal{A}^+ \amalg \mathcal{A}^-$ (resp., $\mathcal{B}_t^+ \amalg \mathcal{B}_t^-$) be any partition of $\caA$ (resp., $\caB_t$). Then the multiplicity
$$
\left|
\widetilde{E}_{(\mathcal{A}^+; \mathcal{R}_l; \mathcal{B}_l^+; \dots; \mathcal{R}_1; \mathcal{B}_1^+;\
\mathcal{K};\ \underline{\mathcal{A}^-; \mathcal{R}_l; \mathcal{B}_l^-; \dots; \mathcal{R}_1; \mathcal{B}_1^-})}:\mathrm{DI}(\mathcal{L}_S(Z(\lambda_l,\dots,\lambda_1,X(r,s;\epsilon,\eta))))
\right|
$$
(where $(\underline{j_1, \dots, j_v}) := (-j_v, \dots, -j_1)$, $\caK= (k-1, \dots, 1,0,-1,\dots, -k+1)$) is equal to
$$
\displaystyle \sum_{\substack{\mathcal{M}_t\, \amalg\, \mathcal{N}_t = \mathcal{R}_t;\\ |\mathcal{M}_l| = q_l - |\mathcal{A}^-|,\
|\mathcal{N}_l| = p_l - |\mathcal{A}^+|;\\
|\mathcal{M}_t| = q_t,\ |\mathcal{N}_t| = p_t\ \text{for}\ t<l;}}
\left|\widetilde{E}_{(\mathcal{M}_l;\mathcal{B}_l^+; \mathcal{M}_{l-1}; \dots; \mathcal{M}_1;\mathcal{B}_1^+;\ \mathcal{K};\ \underline{\mathcal{N}_l;\mathcal{B}_l^-;\mathcal{N}_{l-1}; \dots; \mathcal{N}_1;\mathcal{B}_1^-})}: \mathrm{DI}(X(r,s;\epsilon,\eta))\right|
$$
if $p_l \geq |\mathcal{A}^+|$ and $q_l \geq |\mathcal{A}^-|$. Otherwise, the multiplicity is equal to zero.
\end{thm}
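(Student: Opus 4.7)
The plan is to combine Proposition \ref{prop-translation} with Theorem \ref{thm-DH}. First, Proposition \ref{prop-translation} reduces the multiplicity question at general $(\lambda_l,\dots,\lambda_1)$ to the weakly good case $\lambda_l=\dots=\lambda_1=0$; the target $\widetilde{K}$-type in $\mathrm{DI}(\mathcal{L}_S(Z(\lambda_l,\dots,\lambda_1,\pi_u)))$ corresponds to a shifted weight $\kappa_z$ in $\mathrm{DI}(\mathcal{L}_S(Z(0,\dots,0,\pi_u)))$, where $\pi_u = X(r,s;\epsilon,\eta)$. Theorem \ref{thm-DH} then writes $\mathrm{DI}(\mathcal{L}_S(Z(0,\dots,0,\pi_u)))=\sum \epsilon_{u,v}\delta_w\widetilde{E}_{(\kappa_u;\kappa_w;\underline{\kappa_v})}$, so the target multiplicity equals the signed count of pairs $((\kappa_u|\kappa_v),\kappa_w)$ such that the combined weight, after the shift $-(\lambda)$ and $\Delta^+(\frk,\frt)$-dominance reduction via \eqref{E-tilde-mu}, matches the target.

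The combinatorial heart of the proof is to parametrize these contributing pairs by the partitions $\mathcal{M}_t\amalg\mathcal{N}_t=\mathcal{R}_t$. By Theorem \ref{thm-DI-Aqlambda}, a $\widetilde{K}$-type $\widetilde{E}_{(\kappa_u|\kappa_v)}$ of $\mathrm{DI}(A_{\mathfrak{q}_A})$ corresponds to choosing, for each chain $\mathcal{C}_t^0$, a $p_t$-element subset for $\kappa_u$ and the complementary $q_t$-element subset for $\kappa_v$. After the shift, these selections land in positions occupied by $\mathcal{A}^+\cup\mathcal{N}_l$ and $\mathcal{A}^-\cup\mathcal{M}_l$ respectively (for the top chain, since $\mathcal{C}_l^{\lambda_l}=\mathcal{A}\cup\mathcal{R}_l$), and $\mathcal{N}_t$, $\mathcal{M}_t$ (for $t<l$, since $\mathcal{C}_t^{\lambda_t}=\mathcal{R}_t$). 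This forces the cardinality constraints $|\mathcal{N}_l|=p_l-|\mathcal{A}^+|$, $|\mathcal{M}_l|=q_l-|\mathcal{A}^-|$, $|\mathcal{N}_t|=p_t$, $|\mathcal{M}_t|=q_t$ for $t<l$, and explains the necessity of $p_l\geq|\mathcal{A}^+|$ and $q_l\geq|\mathcal{A}^-|$. For each valid partition, the remaining positions of the target uniquely determine
$$\kappa_w=(\mathcal{M}_l;\mathcal{B}_l^+;\mathcal{M}_{l-1};\dots;\mathcal{M}_1;\mathcal{B}_1^+;\mathcal{K};\underline{\mathcal{N}_l;\mathcal{B}_l^-;\dots;\mathcal{N}_1;\mathcal{B}_1^-}),$$
contributing $|\delta_w|$ to the absolute multiplicity (since $\epsilon_{u,v}=\pm 1$). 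Summing gives the stated formula.

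The main obstacle will be to verify rigorously the bijection between partitions and contributing pairs: one must show that each partition yields a dominant weight matching the target, and conversely that every contributing pair arises from a unique partition with no cancellation in the absolute sum. This hinges on analyzing the Weyl element of $W(\frk,\frt)$ that restores $\Delta^+(\frk,\frt)$-dominance after the shift, in the spirit of Lemma \ref{lemma-translation} and the singular-translation analysis in the proof of Proposition \ref{prop-translation}.
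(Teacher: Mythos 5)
Your overall framework—reduce to $(\lambda_l,\dots,\lambda_1)=(0,\dots,0)$ by Proposition \ref{prop-translation}, decompose via Theorem \ref{thm-DH}, and parametrize contributing pairs by partitions $\mathcal{M}_t\amalg\mathcal{N}_t=\mathcal{R}_t$—agrees with the paper's approach. The cardinality constraints and the necessity of $p_l\geq|\mathcal{A}^+|$, $q_l\geq|\mathcal{A}^-|$ also come out exactly as you describe.

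However, there is a genuine gap. You write that each valid partition contributes $|\delta_w|$ to the absolute multiplicity "(since $\epsilon_{u,v}=\pm 1$)" and then "summing gives the stated formula." This implicitly assumes that the signed contributions $\epsilon_{u,v}\delta_w$ (together with the extra $(-1)^{|\mathcal{M}||\mathcal{N}|}$ factor from reordering the blocks to the form in the target) all carry the \emph{same} sign as the partition $(\mathcal{M},\mathcal{N})$ ranges over the allowed set. If they did not, terms would cancel and the absolute multiplicity of the target $\widetilde{K}$-type would be strictly smaller than the right-hand sum. You correctly flag this as "the main obstacle," but the tools you suggest for resolving it—Lemma \ref{lemma-translation} and the singular-translation analysis in Proposition \ref{prop-translation}—do not address it: those control the translation functor, not the relative signs between different contributing pairs at a fixed infinitesimal character.

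What closes the gap in the paper is a separate parity analysis of $\mathrm{DI}(X(r,s;\epsilon,\eta))$ developed beforehand (Lemma \ref{lem-sp1}, Lemma \ref{lem-sp2}, Proposition \ref{prop-paritysp}). These show that if $(\mathcal{M},\mathcal{N})$ and $(\mathcal{M}^*,\mathcal{N}^*)$ are two partitions of $\mathcal{R}_t$ of the same sizes, then the sign of the corresponding $\widetilde{K}$-type in $\mathrm{DI}(X(r,s;\epsilon,\eta))$ changes by $\det(\zeta)$ for the permutation $\zeta$ relating them. On the other hand, from \cite{HKP}, the sign of the $A_{\mathfrak{q}_A}$-factor changes by $\det(\xi)$ for the analogous permutation $\xi$, and one checks $\det(\xi)\det(\zeta)=1$. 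This exact cancellation is the non-obvious step; without it, or an equivalent argument, the passage from signed multiplicities to absolute multiplicities is unjustified.
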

\begin{proof}
We will only prove the theorem when $l =1$, and the argument for multiple chains is similar.

There are two possibilities for $\mathcal{C}_1^{\lambda}$:
\begin{center}
(a)\ \begin{tabular}{ccccc}
$\mathcal{C}_1^{\lambda} = (\mathcal{A}$ & $\mathcal{R}_1)^{p_1, q_1}$ & &  \tabularnewline
 & $(\mathcal{R}_1;$ \quad \quad \quad & $\mathcal{B}_1$; & $\mathcal{K}) = \mathcal{C}_0$
\end{tabular}\ or\quad  (b)\ \begin{tabular}{ccccc}
 & $\mathcal{C}_1^{\lambda} = (\mathcal{R}_{1})^{p_{1}, q_{1}}$ &  &  &   \tabularnewline
 $(\mathcal{A};$ & $\mathcal{R}_{1}$;   & $\mathcal{B}_1$; & $\mathcal{K}) = \mathcal{C}_0$
\end{tabular}
\end{center}

%Let $\mathcal{A} = \mathcal{A}^+ \amalg \mathcal{A}^-$ (resp., $\mathcal{B} = \mathcal{B}^+ \amalg \mathcal{B}^-$) be any partition of $\caA$ (resp., $\caB$). Then the multiplicity
%$$
%\left|\widetilde{E}_{(\mathcal{A}^+; \mathcal{R}; \mathcal{B}^+; k-1, \dots, -k+1; \underline{\mathcal{A}^-; \mathcal{R}; \mathcal{B}^-})}: \mathrm{DI}(\mathcal{L}_S(Z(\lambda,X(r,s;\epsilon,\eta))))\right|
%$$
%can be determined as follows:
%In case (a), it equals to
%$$\displaystyle
%\sum_{\substack{\mathcal{M}\, \amalg \, \mathcal{N} = \mathcal{R};\\
 %|\mathcal{M}| = q_1 - |\mathcal{A}^-|,\
%|\mathcal{N}| = p_1 - |\mathcal{A}^+|}} \left|\widetilde{E}_{(\mathcal{M};\mathcal{B}^+;k-1, \dots, -k+1;\underline{\mathcal{N};\mathcal{B}^-})}:
%\mathrm{DI}(X(r,s;\epsilon,\eta))\right|
%$$
%if $p_l \geq |\mathcal{A}^+|$ and $q_l \geq |\mathcal{A}^-|$. Otherwise, it equals to $0$.
%.
%In case (b), it equals to
%$$
%\displaystyle \sum_{\substack{\mathcal{M} \, \amalg \, \mathcal{N} = \mathcal{R};\\ |\mathcal{M}| = p_1,\
%|\mathcal{N}| = q_1}} \left|\widetilde{E}_{(\mathcal{A}^+;\mathcal{M};\mathcal{B}^+;k-1, \dots, -k+1;\underline{\mathcal{A}^-;\mathcal{N};\mathcal{B}^-})}: \mathrm{DI}(X(r,s;\epsilon,\eta))\right|.
%$$

We will only prove case (a). Case (b) is similar. In order to obtain the $\widetilde{K}$-type
$$\widetilde{E}_{(\mathcal{A}^+; \mathcal{R}_1; \mathcal{B}_1^+;\ \mathcal{K};\ \underline{\mathcal{A}^-; \mathcal{R}_1; \mathcal{B}_1^-})}$$
in the Dirac index of $\mathcal{L}_S(Z(\lambda, X(r,s;\epsilon,\eta)))$ by Proposition \ref{prop-translation}, one needs to take
\begin{equation}\label{eq-two-components}
\begin{aligned}
&a_{\mathcal{M}, \mathcal{N}} \widetilde{E}_{(\mathcal{A}^+;\mathcal{N}| \mathcal{A}^-;\mathcal{M}) + (\lambda, \dots, \lambda\ |\ -\lambda, \dots, -\lambda)} &&\text{in}\quad \mathrm{DI}(A_{\frq_A}(\mathbb{C}_{n - \frac{p_1+q_1+1}{2}}))\\
&b_{\mathcal{M},\mathcal{N}} \widetilde{E}_{(\mathcal{M};\mathcal{B}_1^+;\ \mathcal{K};\ \underline{\mathcal{N};\mathcal{B}_1^-})} &&\text{in}\quad \mathrm{DI}(X(r,s;\epsilon,\eta))
\end{aligned}
\end{equation}
for some $\mathcal{M} \, \amalg \, \mathcal{N} = \mathcal{R}_1$, where $a_{\mathcal{M},\mathcal{N}}$ and  $b_{\mathcal{M},\mathcal{N}}$
are the corresponding signed multiplicities. In particular, $a_{\mathcal{M},\mathcal{N}} = 1$ or $-1$. By \eqref{eq-two-components}, we must have
$$|\mathcal{N}| = p_1 - |\mathcal{A}^+| \geq 0, \quad |\mathcal{M}| = q_1 - |\mathcal{A}^-| \geq 0$$
as stated in the last part of the theorem.

Fix any choice of $\mathcal{M} \, \amalg \, \mathcal{N} = \mathcal{R}_1$.
Put
$$
N:=|\mathcal{N}| = p_1 - |\mathcal{A}^+|,
\quad M:=|\mathcal{M}| = q_1 - |\mathcal{A}^-|.
$$
By Proposition \ref{prop-translation}, for each possibility of the $\widetilde{K}$-types
of the form \eqref{eq-two-components}, they contribute to $\mathrm{DI}(\mathcal{L}_S(Z(\lambda,X(r,s;\epsilon,\eta))))$ with signed multiplicity:
\begin{align*}
&a_{\mathcal{M},\mathcal{N}}b_{\mathcal{M},\mathcal{N}}  \widetilde{E}_{(\mathcal{A}^+;\mathcal{N};\mathcal{M};\mathcal{B}_1^+;\ \mathcal{K};\ \underline{\mathcal{A}^-;\mathcal{M};\mathcal{N};\mathcal{B}_1^-})}\\ =\ &(-1)^{MN}a_{\mathcal{M},\mathcal{N}}b_{\mathcal{M},\mathcal{N}} \widetilde{E}_{(\mathcal{A}^+;\mathcal{M};\mathcal{N};\mathcal{B}_1^+;\ \mathcal{K};\ \underline{\mathcal{A}^-;\mathcal{M};\mathcal{N};\mathcal{B}_1^-})}  \\
=\ &(-1)^{MN}a_{\mathcal{M},\mathcal{N}}b_{\mathcal{M},\mathcal{N}}  \widetilde{E}_{(\mathcal{A}^+;\mathcal{R}_1;\mathcal{B}_1^+;\ \mathcal{K};\ \underline{\mathcal{A}^-;\mathcal{R}_1;\mathcal{B}_1^-})}.
\end{align*}
We \emph{claim} that  $a_{\mathcal{M},\mathcal{N}}b_{\mathcal{M},\mathcal{N}}$ has a constant sign.
Indeed, let
$$
\mathcal{M} \,\amalg\, \mathcal{N} = \mathcal{M}^* \,\amalg \,\mathcal{N}^* = \mathcal{R}_1, \quad
|\mathcal{M}|=|\mathcal{M}^*|= M, \quad |\mathcal{N}| = |\mathcal{N}^*| = N.
$$
By the knowledge of Dirac index from \cite{HKP} for one-dimensional modules,
$$
a_{\mathcal{M}^*,\mathcal{N}^*} = \det(\xi) a_{\mathcal{M},\mathcal{N}},\quad \text{where}\ \ \xi:(\mathcal{A}^+;\mathcal{N}| \mathcal{A}^-;\mathcal{M}) \mapsto (\mathcal{A}^+;\mathcal{N}^*| \mathcal{A}^-;\mathcal{M}^*).
$$
On the other hand,
by Proposition \ref{prop-paritysp},
$$b_{\mathcal{M}^*,\mathcal{N}^*} = \det(\zeta) b_{\mathcal{M},\mathcal{N}},\quad \text{where}\ \ \zeta:(\mathcal{M};\mathcal{B}_1^+|\mathcal{N};\mathcal{B}_1^-) \mapsto (\mathcal{M}^*;\mathcal{B}_1^+|\mathcal{N}^*;\mathcal{B}_1^-)$$
It is obvious  that $\det(\xi)\det(\zeta) =1$. Thus the claim holds, and the desired multiplicity is
\begin{align*}
\sum_{\substack{\mathcal{M} \, \amalg \, \mathcal{N} = \mathcal{R}_1;\\ |\mathcal{M}| = M,\
|\mathcal{N}| = N}} |a_{\mathcal{M},\mathcal{N}}| |b_{\mathcal{M},\mathcal{N}}|
 =\ \sum_{\substack{\mathcal{M} \, \amalg \, \mathcal{N} = \mathcal{R}_1;
 \\ |\mathcal{M}| =M,\
|\mathcal{N}| = N}} |b_{\mathcal{M},\mathcal{N}}|,
\end{align*}
which finishes the proof.
\end{proof}

\begin{cor} \label{cor-mainsp}
Retain the setting in Theorem \ref{thm-mainsp}. The multiplicity
$$
\left|
\widetilde{E}_{(\mathcal{A}^+; \mathcal{R}_l; \mathcal{B}_l^+; \dots; \mathcal{R}_1; \mathcal{B}_1^+;\
\mathcal{K};\ \underline{\mathcal{A}^-; \mathcal{R}_l; \mathcal{B}_l^-; \dots; \mathcal{R}_1; \mathcal{B}_1^-})}:\mathrm{DI}(\mathcal{L}_S(Z(\lambda_l,\dots,\lambda_1,X(r,s;\epsilon,\eta))))
\right|$$
is equal to
$$
{|\mathcal{R}_l| \choose p_l - |\mathcal{A}^+|}
\prod_{t=1}^{l-1} { p_t + q_t \choose \ p_t }
\left|\widetilde{E}_{(m-k, m-k-1, \dots, m-k-w+1;\ \mathcal{K};\ \underline{m-k-w,\dots, k+1, k})}: \mathrm{DI}(X(r,s;\epsilon,\eta))\right|,
$$
where $w := \sum_{t=1}^{l} (q_t + |\mathcal{B}_t^+|) - |\mathcal{A}^-|$.
\end{cor}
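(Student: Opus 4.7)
The plan is to invoke Theorem \ref{thm-mainsp} to rewrite the target multiplicity as a sum indexed by partitions $\mathcal{M}_t \amalg \mathcal{N}_t = \mathcal{R}_t$, and then to collapse this sum by showing every summand equals the same number. First, I would apply Theorem \ref{thm-mainsp} to express the multiplicity as
$$\sum_{\substack{\mathcal{M}_t \amalg \mathcal{N}_t = \mathcal{R}_t;\\ |\mathcal{M}_l| = q_l - |\mathcal{A}^-|,\ |\mathcal{N}_l|=p_l-|\mathcal{A}^+|;\\ |\mathcal{M}_t|=q_t,\ |\mathcal{N}_t|=p_t\text{ for }t<l}} \left|\widetilde{E}_{(\mathcal{M}_l;\mathcal{B}_l^+;\dots;\mathcal{M}_1;\mathcal{B}_1^+;\,\mathcal{K};\,\underline{\mathcal{N}_l;\mathcal{B}_l^-;\dots;\mathcal{N}_1;\mathcal{B}_1^-})}: \mathrm{DI}(X(r,s;\epsilon,\eta))\right|.$$

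The key step would be to argue that every summand equals the single canonical multiplicity $\left|\widetilde{E}_{(m-k,\dots,m-k-w+1;\,\mathcal{K};\,\underline{m-k-w,\dots,k+1,k})}: \mathrm{DI}(X(r,s;\epsilon,\eta))\right|$. For any admissible partition $(\mathcal{M}_t,\mathcal{N}_t)$, the corresponding $\widetilde{K}$-type has positive non-core portion of size $\sum_t |\mathcal{M}_t| + \sum_t |\mathcal{B}_t^+| = w$, independent of which particular elements of $\bigcup_t \mathcal{R}_t \cup \bigcup_t \mathcal{B}_t = \{k,k+1,\dots,m-k\}$ land on each side of $\mathcal{K}$. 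I would then invoke Proposition \ref{prop-paritysp}, together with Lemmas \ref{lem-sp1} and \ref{lem-sp2} and Remark \ref{rmk-lem-sp1}, to conclude that the absolute signed multiplicity of such a $\widetilde{K}$-type in $\mathrm{DI}(X(r,s;\epsilon,\eta))$ depends only on $w$; these results show that the signed multiplicities of any two $\widetilde{K}$-types sharing the same positive non-core size differ only by permutation signs $\det(\xi)$, so their absolute values agree. The canonical choice in the corollary is simply the special representative obtained by placing the top $w$ values $m-k,\dots,m-k-w+1$ on the positive side.

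The final step is combinatorial: count the number of admissible partitions. For each $t<l$, splitting $\mathcal{R}_t$ (of size $p_t+q_t$) with $|\mathcal{N}_t|=p_t$ gives $\binom{p_t+q_t}{p_t}$ choices; for $t=l$, splitting $\mathcal{R}_l$ with $|\mathcal{N}_l|=p_l-|\mathcal{A}^+|$ gives $\binom{|\mathcal{R}_l|}{p_l-|\mathcal{A}^+|}$ choices. Multiplying yields the claimed coefficient. The main obstacle I anticipate lies in the invariance claim of the second step: while Proposition \ref{prop-paritysp} supplies sign comparisons for permutations on the positive/negative split of a $\widetilde{K}$-type, one must carefully combine these with the explicit binomial formulas for Dirac cohomology multiplicities coming from \cite[Proposition 3.4 and Theorem 3.7]{BP15} — which depend only on $w$ and the fixed data $(k,\epsilon,\eta,r,s)$ — to confirm that the absolute multiplicity is indeed a function of $w$ alone, handling both special and non-special cases uniformly; once this invariance is in hand, the corollary reduces to the binomial bookkeeping above.
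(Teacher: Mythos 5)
Your proposal follows the paper's own route exactly: expand the multiplicity via Theorem \ref{thm-mainsp}, observe that every summand is equal, and count the admissible partitions to produce the binomial coefficients. One small caution: the middle paragraph's appeal to Proposition \ref{prop-paritysp} (and Lemmas \ref{lem-sp1}--\ref{lem-sp2}) only controls the \emph{signs} of the $\widetilde{K}$-types, not the sizes of their multiplicities, so by itself it does not show the summands are equal; the equality of absolute multiplicities across summands — all depending only on $w$ and the fixed data — comes directly from the explicit formulas in Proposition 3.4 and Theorem 3.7 of \cite{BP15}, which is exactly what the paper's one-line proof cites and what you correctly identify in your closing paragraph.
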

\begin{proof}
By Proposition 3.4 and Theorem 3.7 of \cite{BP15}, the multiplicities in each summand of Theorem
\ref{thm-mainsp} are all equal. So the result follows from counting the number of terms in the summation.
\end{proof}

\begin{example}\label{eg-sp22-3}
We continue with the module $\mathcal{L}_S(Z(-6,X(4,2, 0,0)))$ in Example \ref{eg-sp22-2},
whose chains are of the form
\begin{align*} \mathcal{C}_1^{-6} = (5,4,3)^{1,2} \quad \mathcal{C}_0 = (5,4,3,2,1,0,-1,-2).
\end{align*}
Thus
$$\mathcal{R}_1 = \{5,4,3\}, \quad \mathcal{K}= (2,1,0,-1,-2), \quad \mathcal{A}=\mathcal{B}=\emptyset.$$
By Corollary \ref{cor-mainsp},
the Dirac index of $\mathcal{L}_S(Z(-6,X(4,2;0,0)))$ consists of the single $\widetilde{K}$-type
$$\widetilde{E}_{(\mathcal{R}_1;2,1,0,-1,-2;\underline{\mathcal{R}_1})} = \widetilde{E}_{(5,4,3;2,1,0,-1,-2;-3,-4,-5)} = E_{(0,0,0,0,0,0,0,0,0,0,0)}$$
with $w = (q_1 + |\mathcal{B}_1^+|) - |\mathcal{A}^-| = 2 + 0 - 0 = 2$. Hence its multiplicity is equal to
$$
{3 \choose 1 - 0} \left|\widetilde{E}_{(5,4;2,1,0,-1,-2;-3)} : {\rm DI}(X(4, 2, 0,0))\right| = 3\times 2 = 6.  
$$
\hfill\qed
\end{example}

\subsection{Dirac index for $\mathcal{L}_S(Z(\lambda_l, \dots, \lambda_1,X(r,s)))$}
Now we set $G = SO^*(2n)$ and $\pi_u = X(r,s)$.
To obtain an result on
the Dirac index of $\mathcal{L}_S(Z(\lambda_l, \dots, \lambda_1, X(r,s)))$ analogous to Theorem \ref{thm-mainsp},
one needs to know the signs of $\widetilde{E}_{\tau}$ in $\mathrm{DI}(X(r,s))$
(c.f. Proposition \ref{prop-paritysp}). Indeed, \eqref{sign-tau-Xpq} gives the sign of
each $\widetilde{K}$-type appearing in $\mathrm{DI}(X(r,s))$, which immediately implies the following.

\begin{prop} \label{prop-parityso}
Let $\tau = w\Lambda_k - \rho_c$, $\tau^* = w^*\Lambda_k - \rho_c$ such that
\begin{align*}
w\Lambda_k &= (i_1, \dots, i_u; k, \dots, -k; -j_v, \dots, -j_1),\\
w^*\Lambda_k &= (i_1^*, \dots, i_u^*; k, \dots, -k; -j_v^*, \dots, -j_1^*),
\end{align*}
where
$$
\{i_1, \dots, i_u, j_1, \dots, j_v\} = \{i_1^*, \dots, i_u^*, j_1^*, \dots, j_v^*\} = \{k+1, \dots, n-k-1\}.
$$
Then the sign of $E_{\tau}$ and $E_{\tau^*}$ appearing in $\mathrm{DI}(X(r,s))$
differ by $\mathrm{det}(\xi)$, where $\xi \in S_{n-2k-1}$ is the permutation defined by
$$(i_1, \dots, i_u| j_1, \dots, j_v) \mapsto (i_1^*, \dots, i_u^*| j_1^*, \dots, j_v^*).$$
\end{prop}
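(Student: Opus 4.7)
The proof will mirror that of Lemma \ref{lem-sp1}, and is in fact cleaner: in $SO^*(2n)$ there is no special/non-special dichotomy for the $\widetilde{K}$-types contributing to $\mathrm{DI}(X(r,s))$, since the middle block $(k, k-1, \dots, -k)$ of $w\Lambda_k$ is palindromic. The starting point is formula \eqref{sign-tau-Xpq}, which records that the sign of $E_\tau$ in $\mathrm{DI}(X(r,s))$ equals $(-1)^{\sum_t j_t' + r}$, and analogously $(-1)^{\sum_t (j_t^*)' + r}$ for $E_{\tau^*}$. Therefore the two signs differ by $(-1)^{\sum_t j_t' + \sum_t (j_t^*)'}$, and the task reduces to identifying this quantity with $\det(\xi)$.

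To do so, I would factor $\xi$ through a common reference tuple. Write $J = \{j_1, \dots, j_v\}$ and $J^* = \{j_1^*, \dots, j_v^*\}$, and let $\eta_J, \eta_{J^*} \in S_{n-2k-1}$ be the permutations whose one-line notations (as permutations of $\{k+1, \dots, n-k-1\}$) are $(i_1, \dots, i_u, j_1, \dots, j_v)$ and $(i_1^*, \dots, i_u^*, j_1^*, \dots, j_v^*)$, respectively. Since $\xi = \eta_{J^*}\eta_J^{-1}$, one gets $\det(\xi) = (-1)^{l(\eta_J) + l(\eta_{J^*})}$.

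The key computation is an inversion count for $\eta_J$. Because both the $i$-block and the $j$-block of the tuple $(i_1, \dots, i_u \mid j_1, \dots, j_v)$ are individually decreasing, the only ascents occur between an $i$-position and a $j$-position, i.e.\ between some $i_s$ and $j_t$ with $i_s < j_t$. For each fixed $j_t$, the number of $i_s$ less than $j_t$ equals $(j_t - k - 1) - (v - t)$, namely the size of $\{k+1, \dots, j_t - 1\}$ minus the number of $j$'s lying below $j_t$. Summing over $t$ yields $l(\eta_J) = \sum_t j_t' - \tfrac{v(v+1)}{2}$, and likewise for $l(\eta_{J^*})$. Adding the two modulo $2$ gives $l(\eta_J) + l(\eta_{J^*}) \equiv \sum_t j_t' + \sum_t (j_t^*)' \pmod 2$, since $v(v+1)$ is even. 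Hence $\det(\xi) = (-1)^{\sum_t j_t' + \sum_t (j_t^*)'}$, which matches the ratio of signs obtained from \eqref{sign-tau-Xpq}.

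The argument is routine once \eqref{sign-tau-Xpq} is granted, so there is no genuine obstacle; the $SO^*(2n)$ situation is in fact easier than the $Sp(2n,\bbR)$ one, as one avoids entirely the extra casework between special and non-special $\widetilde{K}$-types handled in Lemma \ref{lem-sp2}.
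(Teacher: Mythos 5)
Your proof is correct and takes essentially the same route as the paper, which in this case consists only of the remark that the claim is immediate from the sign formula \eqref{sign-tau-Xpq}; what you add is exactly the inversion-count bookkeeping that the paper performs explicitly in Lemma \ref{lem-sp1} and then omits for the $SO^*$ case, and your observation that the middle block is palindromic (so the special/non-special casework of Lemma \ref{lem-sp2} is absent) is precisely why the paper treats this proposition as immediate. One small caveat worth noting: the identity $l(\eta_J)=\sum_t j_t'-v(v+1)/2$ holds if, as in the paper's proof of Lemma \ref{lem-sp1}, lengths are measured against the \emph{decreasing} reference tuple $(n-k-1,\dots,k+1)$, so that $l(\eta_J)$ is exactly the number of cross-block pairs $i_s<j_t$; with the increasing reference the count differs by the constant $\binom{u+v}{2}$, but since this constant cancels in $l(\eta_J)+l(\eta_{J^*})$ modulo $2$, your final conclusion is unaffected.
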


%Let $\mathcal{L}_S(Z(\lambda,X(r,s)))$ with $\mathcal{C}_1$, $\mathcal{C}_0$ interlaced with
%\begin{equation} \label{eq-overlap3}
%\mathcal{R} := \mathcal{C}_1 \cap \mathcal{C}_0 \neq \emptyset.
%\end{equation}
%Then $\mathcal{L}_S(Z(\lambda,X(r,s)))$ has non-zero Dirac cohomology only if
%\begin{equation} \label{eq-overlap4}
%\mathcal{R} \subseteq \{k+1, k+2, \dots, n-k-1\}.
%\end{equation}
%Suppose that \eqref{eq-overlap3} and \eqref{eq-overlap4} hold. Then there are two cases:
%\begin{itemize}
%\item[(a)] $n-k-1 \in \mathcal{R}$, with
%$$\mathcal{C}_1 = (\mathcal{A}; \mathcal{R})^{p_1,q_1},\quad \mathcal{C}_0 = (\mathcal{R}; \mathcal{B}; k, \dots, 0, \dots, -k).$$
%\item[(b)] $n-k-1 \notin \mathcal{R}$, with
%$$\mathcal{C}_1 = (\mathcal{R})^{p_1,q_1}, \quad \mathcal{C}_0 = (\mathcal{A}; \mathcal{R}; \mathcal{B}; k, \dots, 0, \dots, -k).$$
%\end{itemize}

Now we can state the following result for $\mathrm{DI}(\mathcal{L}_S(Z(\lambda_l, \dots, \lambda_1, X(r,s))))$.

\begin{thm} \label{thm-mainso}
Let $G$ be $SO^*(2n)$. Consider $\mathcal{L}_S(Z(\lambda_l, \dots, \lambda_1, X(r,s)))$ such that its chains are of the form \eqref{eq-interlaced}.
Let $\mathcal{A}^+ \amalg \mathcal{A}^-$ (resp., $\mathcal{B}_t^+ \amalg \mathcal{B}_t^-$) be any partition of $\caA$ (resp., $\caB_t$). Then the multiplicity
$$
\left|
\widetilde{E}_{(\mathcal{A}^+; \mathcal{R}_l; \mathcal{B}_l^+; \dots; \mathcal{R}_1; \mathcal{B}_1^+;\
\mathcal{K};\ \underline{\mathcal{A}^-; \mathcal{R}_l; \mathcal{B}_l^-; \dots; \mathcal{R}_1; \mathcal{B}_1^-})}:\mathrm{DI}(\mathcal{L}_S(Z(\lambda_l,\dots,\lambda_1,X(r,s))))
\right|
$$
(here $\caK= (k, \dots, 1, 0,-1,\dots, -k)$) is equal to
$$
{r+s\choose r}  {|\mathcal{R}_l| \choose p_l - |\mathcal{A}^+|}
\prod_{t=1}^{l-1} { p_t + q_t \choose \ p_t }
$$
if $p_l \geq |\mathcal{A}^+|$ and $q_l \geq |\mathcal{A}^-|$. Otherwise, the multiplicity is equal to zero.
\end{thm}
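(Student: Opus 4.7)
The plan is to follow the strategy used for $\mathrm{DI}(\mathcal{L}_S(Z(\lambda_l,\dots,\lambda_1,X(r,s;\epsilon,\eta))))$ in Theorem \ref{thm-mainsp} and its Corollary \ref{cor-mainsp}, replacing the symplectic inputs (Proposition \ref{prop-paritysp} and the signed-multiplicity count from \cite{BP15}) by their $SO^*(2n)$ analogues (Proposition \ref{prop-parityso} and Theorem \ref{thm-Dirac-Xpq}). First I would reduce to the case $\lambda_l=\cdots=\lambda_1=0$ via Proposition \ref{prop-translation}, since translating by $(\lambda)$ merely shifts the $\widetilde{K}$-type labels by the same amount and leaves multiplicities unchanged.

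Next I would apply Theorem \ref{thm-DH} to express $\mathrm{DI}(\mathcal{L}_S(Z(0,\dots,0,X(r,s))))$ as the combinatorial convolution of $\mathrm{DI}(A_{\mathfrak{q}_A}(\mathbb{C}_{n-\frac{p+q+1}{2}}))$ (which is known explicitly from Theorem \ref{thm-DI-Aqlambda}) with $\mathrm{DI}(X(r,s))$ (given in Theorem \ref{thm-Dirac-Xpq}). The contribution to
$$\widetilde{E}_{(\mathcal{A}^+;\mathcal{R}_l;\mathcal{B}_l^+;\dots;\mathcal{R}_1;\mathcal{B}_1^+;\ \mathcal{K};\ \underline{\mathcal{A}^-;\mathcal{R}_l;\mathcal{B}_l^-;\dots;\mathcal{R}_1;\mathcal{B}_1^-})}$$
then comes from pairs of choices of disjoint partitions $\mathcal{M}_t\amalg\mathcal{N}_t=\mathcal{R}_t$ with the size constraints $|\mathcal{M}_l|=q_l-|\mathcal{A}^-|$, $|\mathcal{N}_l|=p_l-|\mathcal{A}^+|$ and $|\mathcal{M}_t|=q_t$, $|\mathcal{N}_t|=p_t$ for $t<l$, exactly as in Theorem \ref{thm-mainsp}. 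When either $|\mathcal{A}^+|>p_l$ or $|\mathcal{A}^-|>q_l$, no such partition exists and the multiplicity vanishes.

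The main step, and the one to watch carefully, is the sign-alignment argument. For any two admissible choices of partitions $(\mathcal{M}_t,\mathcal{N}_t)$ versus $(\mathcal{M}_t^*,\mathcal{N}_t^*)$, the signed contribution from the $A_{\mathfrak{q}_A}(\lambda)$ factor changes by $\det(\xi)$ where $\xi$ is the permutation carrying one labeling of the $\widetilde{K}$-type of $A_{\mathfrak{q}_A}(\lambda)$ to the other (this is the content of \cite{HKP} and Theorem \ref{thm-DI-Aqlambda}). By Proposition \ref{prop-parityso}, the signed multiplicity coming from $X(r,s)$ changes by $\det(\zeta)$ for the corresponding permutation $\zeta$ acting on the $\mathcal{R}_t$-slots on the unipotent side. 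A direct inspection of how $\xi$ and $\zeta$ act on the same underlying slots shows $\det(\xi)\det(\zeta)=1$, so the two sign changes cancel and every partition contributes with the same sign. Consequently the total \emph{signed} multiplicity equals the \emph{unsigned} sum.

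It then remains to count: the number of admissible partitions is $\binom{|\mathcal{R}_l|}{p_l-|\mathcal{A}^+|}\prod_{t=1}^{l-1}\binom{p_t+q_t}{p_t}$, and each admissible partition contributes one copy of $\widetilde{E}_{(\cdots)}$ weighted by the corresponding multiplicity in $\mathrm{DI}(X(r,s))$. By Theorem \ref{thm-Dirac-Xpq}, every $\widetilde{K}$-type appearing in $\mathrm{DI}(X(r,s))$ has multiplicity exactly $\binom{k}{r}=\binom{r+s}{r}$. Multiplying the count by this common multiplicity yields the asserted formula. The only real obstacle is the determinant-cancellation lemma above; once that is verified, the rest is bookkeeping parallel to Corollary \ref{cor-mainsp}.
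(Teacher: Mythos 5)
Your proposal follows the same route as the paper: reduce to $\lambda = 0$ via Proposition~\ref{prop-translation}, convolve $\mathrm{DI}(A_{\mathfrak{q}_A})$ with $\mathrm{DI}(X(r,s))$ via Theorem~\ref{thm-DH}, use Proposition~\ref{prop-parityso} in place of Proposition~\ref{prop-paritysp} to show $\det(\xi)\det(\zeta)=1$ so that all admissible partitions contribute with the same sign, then count partitions and multiply by the constant multiplicity $\binom{k}{r}=\binom{r+s}{r}$ from Theorem~\ref{thm-Dirac-Xpq}. The paper's own proof compresses this to the observation that the argument is analogous to Theorem~\ref{thm-mainsp}/Corollary~\ref{cor-mainsp}, so your write-up is correct and matches the intended argument.
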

\begin{proof}
The proof is analogous to that of Theorem \ref{thm-mainsp} and Corollary \ref{cor-mainsp}. Namely, the multiplicity is equal to
$$
\displaystyle \sum_{\substack{\mathcal{M}_t\, \amalg\, \mathcal{N}_t = \mathcal{R}_t;\\ |\mathcal{M}_l| = q_l - |\mathcal{A}^-|,\
|\mathcal{N}_l| = p_l - |\mathcal{A}^+|;\\
|\mathcal{M}_t| = q_t,\ |\mathcal{N}_t| = p_t\ \text{for}\ t<l;}}
\left|\widetilde{E}_{(\mathcal{M}_l;\mathcal{B}_l^+; \mathcal{M}_{l-1}; \dots; \mathcal{M}_1;\mathcal{B}_1^+;\ \mathcal{K};\ \underline{\mathcal{N}_l;\mathcal{B}_l^-;\mathcal{N}_{l-1}; \dots; \mathcal{N}_1;\mathcal{B}_1^-})}: \mathrm{DI}(X(r,s))\right|
$$
By Theorem \ref{thm-Dirac-Xpq}, all the multiplicities of the above formulas
are equal to ${r+s\choose r}$. Hence the result follows by counting the
amount of summands.
\end{proof}

\begin{remark}
An analogous statement for Corollary \ref{cor-mainsp} and Theorem \ref{thm-mainso} holds also when $\pi_u = \mathrm{triv}$ is the trivial representation of $Sp(2m,\mathbb{R})$ or $SO^*(2m)$.
Under this setting,
$$\mathcal{C}_0 = \begin{cases} (m, \dots, 2,1) &\text{for}\ G = Sp(2m,\mathbb{R});\\
(m-1, \dots, 1,0) &\text{for}\ G = SO^*(2m)
\end{cases}$$
and $\mathcal{L}_S(Z(\lambda_l, \dots, \lambda_1,\mathrm{triv}))$ is a weakly fair $\aq(\lambda)$-module since $\mathrm{triv}$ is one-dimensional, and hence Theorem \ref{thm-DI-Aqlambda} applies.
Note that the infinitesimal character of $\mathcal{L}_S(Z(\lambda_l, \dots, \lambda_1,\mathrm{triv}))$
satisfies Theorem \ref{thm-HP} if its chains satisfy
\begin{equation} \label{eq-interlaced3}
\begin{tabular}{ccccccccccccc}
$(\mathcal{A};$ & $\mathcal{R}_l)^{p_l,q_l}$ &  & $(\mathcal{R}_{l-1})^{p_{l-1},q_{l-1}}$ & $\dots$  & $\dots$ & $(\mathcal{R}_{1})^{p_{1},q_{1}}$ &  &  &  &  &  & \tabularnewline
 & $(\mathcal{R}_l;$ \quad \quad \quad & $\mathcal{B}_l$; & $\mathcal{R}_{l-1}$;\quad \quad \quad \quad  & $\mathcal{B}_{l-1}$; & $\dots$; & $\mathcal{R}_{1}$; \quad \quad & $\mathcal{B}_1) = \mathcal{C}_0$, &  &  &  &
\end{tabular},
\end{equation}
or when $G = Sp(2m,\mathbb{R})$ and $0 \in \mathcal{C}_1$, we may have
\begin{equation} \label{eq-interlaced4}
\begin{tabular}{ccccccccccccc}
$(\mathcal{A};$ & $\mathcal{R}_l)^{p_l,q_l}$ &  & $(\mathcal{R}_{l-1})^{p_{l-1},q_{l-1}}$ & $\dots$  & $\dots$ & $(\mathcal{R}_{1};$ & $0)^{p_{1},q_{1}}$ & \tabularnewline
 & $(\mathcal{R}_l;$ \quad \quad \quad & $\mathcal{B}_l$; & $\mathcal{R}_{l-1}$;\quad \quad \quad \quad  & $\mathcal{B}_{l-1}$; & $\dots$; & \quad \quad \quad $\mathcal{R}_{1}) = \mathcal{C}_0$,
\end{tabular},
\end{equation}
or
\begin{equation} \label{eq-interlaced5}
\begin{tabular}{ccccccccccccc}
$\mathcal{C}_1 = (\mathcal{A};$ & $\mathcal{R}_1;$ & $0)^{p_{1},q_{1}}$ & \tabularnewline
 &\quad \quad $(\mathcal{R}_1) = \mathcal{C}_0$,
\end{tabular}.
\end{equation}
It turns out that in all cases, the multiplicity
\begin{align*}
&\left|
\widetilde{E}_{(\mathcal{A}^+; \mathcal{R}_l; \mathcal{B}_l^+; \dots; \mathcal{R}_1; \mathcal{B}_1^+;\ \underline{\mathcal{A}^-; \mathcal{R}_l; \mathcal{B}_l^-; \dots; \mathcal{R}_1; \mathcal{B}_1^-})}:\mathrm{DI}(\mathcal{L}_S(Z(\lambda_l,\dots,\lambda_1,\mathrm{triv})))
\right|\ \text{in}\ \eqref{eq-interlaced3},\ \text{or} \\
&\left|
\widetilde{E}_{(\mathcal{A}^+; \mathcal{R}_l; \mathcal{B}_l^+; \dots; \mathcal{R}_1;\ 0; \ \underline{\mathcal{A}^-; \mathcal{R}_l; \mathcal{B}_l^-; \dots; \mathcal{R}_1})}:\mathrm{DI}(\mathcal{L}_S(Z(\lambda_l,\dots,\lambda_1,\mathrm{triv})))
\right|\ \text{in}\ \eqref{eq-interlaced4},\ \text{or} \\
&\left|
\widetilde{E}_{(\mathcal{A}^+; \mathcal{R}_1;\ 0; \ \underline{\mathcal{A}^-; \mathcal{R}_l})}:\mathrm{DI}(\mathcal{L}_S(Z(\lambda_l,\dots,\lambda_1,\mathrm{triv})))
\right|\ \text{in}\ \eqref{eq-interlaced5}
\end{align*}
are equal to
$${|\mathcal{R}_l| \choose p_l - |\mathcal{A}^+|}  \prod_{t=1}^{l-1} { p_t + q_t \choose \ p_t }$$
(in the third case, we have $l = 1$ and the second term above will not show up).
\end{remark}

\section{Parity of spin-lowest $K$-types and cancellation in the Dirac index}   \label{sec-parity}

Let $\pi$ be an irreducible unitary $(\frg, K)$ module. Fix a $K$-type $\mu_0$ of $\pi$. Let $\mu$ be any $K$-type of $\pi$. We assume that
\begin{equation}\label{integral-pi}
\langle \mu - \mu_0, \zeta \rangle \in \bbZ
\end{equation}
We call the parity of this integer the \emph{parity of $\mu$}, and denote it by $p(\mu)$.

\begin{defi}
We say that the $K$-type $E_{\mu}$ of $\pi$ is \emph{related to} the $\widetilde{K}$-type $E_{\nu}$ of $H_D(\pi)$ if $E_{\nu}$ is a PRV-component \cite{PRV} of $E_{\mu}\otimes {\rm Spin}_G$.
\end{defi}

The following result aims to clarify the link between the possible cancellations of $\widetilde{K}$-types in $H_D(\pi)$ and the parities of the spin-lowest $K$-types of $\pi$.

\begin{thm}\label{thm-parity-cancellation}
Let $\pi$ be an irreducible unitary $(\frg, K)$ module such that \eqref{integral-pi} holds. Then ${\rm Hom}_{\widetilde{K}}(H_D^+(\pi), H_D^-(\pi))=0$ if and only if for each $\widetilde{K}$-type $E_{\nu}$ of $H_D(\pi)$ (if exists), all the spin lowest $K$-types of $\pi$ which are related to $E_{\nu}$ have the same parity.
\end{thm}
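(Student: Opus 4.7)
The plan is to reduce the question about $H_D^{\pm}(\pi)$ to a question about the isotypic components of $\pi \otimes \mathrm{Spin}_G^{\pm}$, and then convert the $\pm$-sign of each contribution into the parity $p(\mu)$ via a short computation with $\zeta$.

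First, I would invoke unitarity and Parthasarathy's Dirac inequality: $D^2$ acts on the $E_{\nu}$-isotypic of $\pi \otimes \mathrm{Spin}_G$ by a non-negative scalar that vanishes iff $\nu + \rho_c \in W(\mathfrak{g}, \mathfrak{t}) \cdot \Lambda$. Since $D$ is self-adjoint on unitary modules, $\ker D = \ker D^2$, whence
\begin{equation*}
H_D^{\pm}(\pi)[E_{\nu}] = (\pi \otimes \mathrm{Spin}_G^{\pm})[E_{\nu}]
\end{equation*}
for all such $\nu$, and both sides are zero otherwise. Consequently $\mathrm{Hom}_{\widetilde{K}}(H_D^+(\pi), H_D^-(\pi)) = 0$ is equivalent to: for each $\widetilde{K}$-type $E_{\nu}$ of $H_D(\pi)$, $E_{\nu}$ occurs in only one of $\pi \otimes \mathrm{Spin}_G^+$ or $\pi \otimes \mathrm{Spin}_G^-$.

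Next I would describe the contributions explicitly. From $\mathrm{Spin}_G = \bigoplus_{w \in W(\mathfrak{g}, \mathfrak{t})^1} E_{w\rho - \rho_c}$ with the $\pm$-grading given by $(-1)^{l(w)}$ (Lemma \ref{lemma-even-odd-spinG}), a $K$-type $E_{\mu} \subseteq \pi$ contributes to $(\pi \otimes \mathrm{Spin}_G^{\pm})[E_{\nu}]$ via each $w$ for which $E_{\nu}$ is a PRV component of $E_{\mu} \otimes E_{w\rho - \rho_c}$, with sign $(-1)^{l(w)}$; a Parthasarathy-norm comparison then forces any such $E_\mu$ to itself be spin-lowest, i.e., $\mu + \rho_c \in W(\mathfrak{g}, \mathfrak{t}) \cdot \Lambda$. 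The heart of the proof is the parity computation. Writing the PRV relation in the form $\nu = \sigma^K(\mu + w\rho - \rho_c) - \langle \Psi \rangle$ for some $\sigma^K \in W(\mathfrak{k}, \mathfrak{t})$ and some non-negative sum $\langle \Psi \rangle$ of compact simple roots, pairing with $\zeta$, and using \eqref{zeta-k-p} (so that compact roots vanish and noncompact roots are odd mod $2$) together with Lemma \ref{lemma-Phiw} (which yields $\langle \rho - w\rho, \zeta \rangle \equiv l(w) \pmod{2}$), one obtains
\begin{equation*}
l(w) \equiv \langle \mu, \zeta \rangle - \langle \nu, \zeta \rangle + \langle \rho_n, \zeta \rangle \equiv p(\mu) + c_{\nu} \pmod{2},
\end{equation*}
where $c_{\nu} := \langle \mu_0 - \nu + \rho_n, \zeta \rangle \pmod{2}$ depends only on $\nu$. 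Thus, once $\nu$ is fixed, the side $\mathrm{Spin}_G^{\pm}$ into which $E_{\mu}$ contributes to $E_{\nu}$ is determined by the single bit $p(\mu)$.

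To finish, I would note that multiplicities within a given $\pi \otimes \mathrm{Spin}_G^{\pm}$ are non-negative integers which add and cannot cancel, so the contributions to $E_{\nu}$ land on a single side iff all spin-lowest $K$-types of $\pi$ related to $E_{\nu}$ share a common parity. Combined with the first step this yields the theorem. The main obstacle I expect is the second step: one must verify that every $E_{\nu}$-isotypic contribution in $(\pi \otimes \mathrm{Spin}_G)[E_{\nu}]$ with $\nu + \rho_c \in W \cdot \Lambda$ comes from a spin-lowest $K$-type via a PRV component, rather than from a non-extremal $K$-type or a non-PRV component. This is a norm inequality of Parthasarathy type applied inside the tensor product $E_{\mu} \otimes E_{w\rho - \rho_c}$, and once it is established the remaining arithmetic is routine.
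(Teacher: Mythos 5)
Your proposal is correct and follows essentially the same route as the paper: both hinge on the identical parity computation pairing with $\zeta$ (via \eqref{zeta-k-p} and Lemma \ref{lemma-Phiw}) to relate $l(w)$ to $p(\mu)$ modulo a $\nu$-dependent constant, and then read off the $\pm$-grading from Lemma \ref{lemma-even-odd-spinG}. The "main obstacle" you flag (that contributions to $H_D(\pi)[E_\nu]$ for unitary $\pi$ come only from spin-lowest $K$-types via PRV components) is the standard Parthasarathy Dirac-inequality argument, which the paper uses implicitly in its setup at the end of Section \ref{sec-DI}; it is not a genuine gap.
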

\begin{proof}
It suffices to consider the case that $H_D(\pi)\neq 0$. Then ${\rm Hom}_{\widetilde{K}}(H_D^+(\pi), H_D^-(\pi))=0$ if and only if the occurrences of $E_{\nu}$ either \emph{all} live in $H_D^+(\pi)$, or \emph{all} live in $H_D^-(\pi)$. Here $E_{\nu}$ runs over all the \emph{distinct} $\widetilde{K}$-components of $H_D(\pi)$.

Take two arbitrary $K$-types $E_{\mu_1}$ and $E_{\mu_2}$ of $\pi$ which are related to the $\widetilde{K}$-type $E_{\nu}$. Then there exist $w_1, w_2\in W(\frg, \frt)^1$ such that
$$
\{(w_1\rho -\rho_c)+ w_0^K \mu_1\}=\{(w_2\rho -\rho_c) + w_0^K \mu_2\}=\nu.
$$
Here $w_0^K$ stands for the unique longest element of $W(\frk, \frt)$. Removing the two brackets, we have
\begin{equation}\label{mu-gamma-1-2}
(w_1\rho -\rho_c)+ \mu_1- (\mu_1- w_0^K \mu_1)+\gamma_1=(w_2\rho -\rho_c)+\mu_2 -(\mu_2-w_0^K \mu_2) +\gamma_2,
\end{equation}
where each $\gamma_i$ is a non-negative integer combination of roots in $\Delta^+(\frk, \frt)$, and so is each $\mu_i- w_0^K \mu_i$.
Therefore,
$$
\mu_1- (\rho-w_1\rho)- (\mu_1- w_0^K \mu_1)+\gamma_1=\mu_2- (\rho-w_2\rho)-(\mu_2-w_0^K \mu_2)+\gamma_2.
$$
In other words,
$$
\mu_1- \langle\Phi_{w_1}\rangle- (\mu_1- w_0^K \mu_1)+\gamma_1=\mu_2- \langle\Phi_{w_2}\rangle-(\mu_2-w_0^K \mu_2)+\gamma_2.
$$
Taking inner products with $\zeta$ and passing to ${\rm mod}\ 2$, we have that
\begin{equation}\label{parity-delta-length-w}
p(\mu_1)+ l(w_1)\equiv p(\mu_2)+ l(w_2) \quad ({\rm mod}\ 2)
\end{equation}
by using \eqref{zeta-k-p}. Now the desired conclusion follows from \eqref{parity-delta-length-w} and Lemma \ref{lemma-even-odd-spinG}.
\end{proof}

\begin{example} Let us consider the following irreducible unitary representation $\pi$ of $Sp(10, \bbR)$.
\begin{verbatim}
G:Sp(10,R)
set p=parameter (KGB (G)[444],[4,2,4,0,1]/1,[1,0,2,-1,1]/1)
is_unitary(p)
Value: true
print_branch_irr_long (p,KGB (G,31), 65)
m  x    lambda               hw                      dim    height
1  179  [ 2, 2, 2, 2, 1 ]/1  [  2, -1, -1, -3, -3 ]  1200   42
1  32   [ 3, 2, 2, 0, 0 ]/1  [  3,  0, -1, -3, -3 ]  5400   50
1  4    [ 3, 2, 2, 0, 0 ]/1  [  2, -1, -2, -3, -4 ]  5120   51
1  91   [ 3, 3, 2, 0, 1 ]/1  [  2, -2, -2, -4, -4 ]  2250   57
1  180  [ 3, 3, 2, 2, 1 ]/1  [  4, -1, -1, -3, -3 ]  3850   58
1  179  [ 3, 3, 2, 2, 1 ]/1  [  3, -1, -1, -3, -4 ]  7425   58
1  179  [ 4, 2, 2, 2, 1 ]/1  [  2, -1, -1, -3, -5 ]  9240   60
1  56   [ 3, 3, 2, 1, 0 ]/1  [  3,  0, -2, -3, -4 ]  16170  60
1  1    [ 3, 3, 2, 1, 0 ]/1  [  4,  1, -1, -3, -3 ]  16170  61
1  4    [ 4, 2, 2, 1, 0 ]/1  [  2, -1, -3, -3, -5 ]  8624   63
1  4    [ 3, 3, 3, 0, 0 ]/1  [  3, -1, -2, -4, -4 ]  9625   63
\end{verbatim}
By Theorem \ref{thm-HP},  the \texttt{atlas} height of any spin-lowest $K$-type of $\pi$ is less than or equal to $55$. It turns out that the first three $K$-types are exactly all the spin lowest $K$-types of $\pi$:
$$
\mu_1=(2, -1, -1, -3, -3), \ \mu_2=(3,  0, -1, -3, -3),  \ \mu_3=(2, -1, -2, -3, -4).
$$
Recall that $\zeta=(\frac{1}{2}, \frac{1}{2}, \frac{1}{2}, \frac{1}{2}, \frac{1}{2})$. Thus $\mu_2$ and $\mu_3$ have the same parity, which is opposite to that of $\mu_1$.
Moreover, $\mu_1$ contributes $E_{(1,1,1,0,0)}$  to $H_D(\pi)$, while $\mu_2$ and $\mu_3$ both contribute $E_{(0,0,-1,-1,-1)}$. We conclude that there is no cancellation when passing from $H_D(\pi)$ to ${\rm DI}(\pi)$.

We now use results in the previous sections to compute $\mathrm{DI}(\pi)$. Indeed,
$\pi = \mathcal{L}_S(-2,\mathrm{triv})$ corresponds to the chains
$$\mathcal{C}_1^{-2} = (3,2,1)^{1,2}, \quad \quad \mathcal{C}_0 = (2,1)$$
in the form of \eqref{eq-interlaced3}. Therefore, $\mathcal{A} = \{\bf 3\}$, $\mathcal{R}_1 = \{2,1\}$, and
\begin{align*}
&|\widetilde{E}_{({\bf 3}; 2,1; \underline{2,1})}:\mathrm{DI}(\pi)| = |\widetilde{E}_{(3,2,1,-1,-2)}:\mathrm{DI}(\pi)|=
|E_{(1,1,1,0,0)}:\mathrm{DI}(\pi)| = {2 \choose 1 - 1} = 1,\\
&|\widetilde{E}_{(2,1; \underline{{\bf 3},2,1})}:\mathrm{DI}(\pi)| =
|\widetilde{E}_{(2,1,-1,-2,-3)}:\mathrm{DI}(\pi)| =
|E_{(0,0,-1,-1,-1)}:\mathrm{DI}(\pi)|= {2 \choose 1 - 0} = 2. 
\end{align*}
\hfill\qed
\end{example}

\begin{example} \label{eg-splitf4-parity}
Let us revisit Example 4.4 of \cite{DW4}. Adopt the setting there, and let $\varpi_1, \dots, \varpi_4$ be the fundamental weights for $\Delta^+(\frk, \frt)$. We have that
$$
\zeta=\varpi_4=(1, 1, 0, 0).
$$
We use $[a, b, c, d]$ to stand for the $\frk$-type $a\varpi_1+b\varpi_2+c\varpi_3+d\varpi_4$. Note that $w_0^K=-1$. For the $A_{\frq}(\lambda)$ module, we note that
$$
\Delta(\fru, \frt)\subseteq \Delta^+(\frk, \frt) \cup w^{(1)} \Delta^+(\frp, \frt),
$$
where $w^{(1)}=s_{\alpha_4}\in W(\frg, \frt)^1$. Then one can figure out that $\Delta(\fru\cap\frp, \frt)$ consists of the following eight roots:
\begin{align*}
(0, -1, 1, 0), (0, 1, 1, 0), (\frac{1}{2}, \frac{1}{2}, \frac{1}{2}, -\frac{1}{2}), (\frac{1}{2}, \frac{1}{2}, \frac{1}{2}, \frac{1}{2}), (1, 0, 0, -1), (1, 0, 0, 0), (1, 0, 0, 1), (1, 0, 1, 0).
\end{align*}
Here the first root $(0, -1, 1, 0)=-\alpha_4$, and it is the unique one in $\Delta^-(\frp, \frt)$.

One identifies that
$$
\lambda+2\rho(\fru\cap\frp)=[0, 0, 1, 3]
$$
is the lowest $K$-type of the $A_{\frq}(\lambda)$. As computed in Example 6.3 of \cite{DDY}, there is only one $\widetilde{K}$-type in $H_D(A_{\frq}(\lambda))$, i.e., $E_{\nu}$ with $\nu=[0, 0, 0, 1]$. It has multiplicity two. Indeed, there are two spin-lowest $K$-types of $A_{\frq}(\lambda)$ in total, both of which are related to the $\widetilde{K}$-type $E_{\nu}$:
$$
\mu_1:=[0, 2, 0, 4]=\lambda+2\rho(\fru\cap\frp)+(1, 0, 0, 1), \quad
\mu_2:=[0, 0, 3, 1]=\lambda+2\rho(\fru\cap\frp)+ 2 (-\alpha_4).
$$
Therefore, $\mu_1$ and $\mu_2$ have distinct parities. By \eqref{parity-delta-length-w}, it must happen that one $E_{\nu}$ lives in $H_D^+(A_{\frq}(\lambda))$, while the other $E_{\nu}$ lives in $H_D^-(A_{\frq}(\lambda))$. Thus they cancel in ${\rm DI} (A_{\frq}(\lambda))$, which then vanishes.
Indeed, as been explicitly obtained in Example 6.3 of \cite{DDY},
$$
w_1=w^{(2)}=s_4s_3, \quad w_2=w^{(10)}=s_4s_3s_2s_1s_3s_2s_4.
$$
Thus the $E_{\nu}$ from $E_{\mu_1}\otimes {\rm Spin}_G$ lives in $H_D^+(A_{\frq}(\lambda))$, while the $E_{\nu}$ from $E_{\mu_2}\otimes {\rm Spin}_G$ lives in $H_D^-(A_{\frq}(\lambda))$. \hfill\qed
\end{example}

\bigskip

\centerline{\scshape Funding}
Dong was supported by the National Natural Science Foundation of China (grant 11571097, 2016-2019). Wong is supported by the National Natural Science Foundation of China (grant 11901491) and the Presidential Fund of CUHK(SZ).

\medskip
\centerline{\scshape Acknowledgements}
We thank Professor Vogan sincerely for guiding us through coherent families.

\end{document}